\newtheorem{thm}{{Theorem}}[section]
\newtheorem{prop}[thm]{{Proposition}}
\newtheorem{lem}[thm]{{Lemma}}
\newtheorem{cor}[thm]{{Corollary}}
\newtheorem{remark}[thm]{Remark}
\numberwithin{equation}{section}
\newtheorem{Def}[thm]{Definition}
\def\N{\mathbb{N}}
\def\Z{\mathbb{Z}}
\def\Q{\mathbb{Q}}
\def\R{\mathbb{R}}
\def\C{\mathbb{C}}
\def\A{\mathbb{A}}
\def\GL{{\mathop{\mathrm{GL}}}}
\def\SL{{\mathop{\mathrm{SL}}}}
\def\O{{\mathop{\mathrm{O}}}}
\def\SO{{\mathop{\mathrm{SO}}}}
\def\U{{\mathop{\mathrm{U}}}}
\def\Sp{{\mathop{\mathrm{Sp}}}}
\def\GSp{{\mathop{\mathrm{GSp}}}}
\def\Sym{{\mathop{\mathrm{Sym}}}}
\def\Re{{\mathop{\mathrm{Re}}}}
\def\Tr{{\mathop{\mathrm{Tr}}}}
\def\diag{{\mathop{\mathrm{diag}}}}
\def\vol{{\mathop{\mathrm{vol}}}}
\def\d{{\mathrm{d}}}
\def\ve{\varepsilon}
\def\tr{{\rm tr}}
\def\bsl{\backslash}
\def\inf{\infty}
\def\min{{\mathrm{min}}}
\def\F{\mathbb{F}}
\def\uk{{\underline{k}}}
\def\ul{{\underline{l}}}
\def\cL{{\mathcal{L}}}
\def\bs{{\backslash}}
\def\ds{\displaystyle}
\def\lra{{\longrightarrow}}
\def\G{{\Gamma}}
\def\trep{{\mathbbm{1}}}
\def\cH{{\mathcal H}}
\def\ur{{\mathrm{ur}}}
\def\tf{{\tilde{f}}}
\numberwithin{equation}{section}
\begin{document}
\title[Siegel cusp forms of general degree]
{Equidistribution theorems for holomorphic Siegel cusp forms of general degree: 
the level aspect}
\author{Henry H. Kim, Satoshi Wakatsuki and Takuya Yamauchi}
\date{\today}

\keywords{trace formula, holomorphic Siegel modular forms, equidistribution theorems, standard $L$-functions}
\thanks{The first author is partially supported by NSERC grant \#482564. The second author is partially supported by JSPS Grant-in-Aid for Scientific Research (C) No.20K03565, (B) No.21H00972. The third author is partially supported by 
JSPS KAKENHI Grant Number (B) No.19H01778.}
\subjclass[2010]{11F46, 11F70, 22E55, 11R45}
\address{Henry H. Kim \\
Department of mathematics \\
 University of Toronto \\
Toronto, Ontario M5S 2E4, CANADA \\
and Korea Institute for Advanced Study, Seoul, KOREA}
\email{henrykim@math.toronto.edu}

\address{Satoshi Wakatsuki \\
Faculty of Mathematics and Physics, Institute of Science and Engineering\\
Kanazawa University\\
Kakumamachi, Kanazawa, Ishikawa, 920-1192, JAPAN}
\email{wakatsuk@staff.kanazawa-u.ac.jp}

\address{Takuya Yamauchi \\
Mathematical Inst. Tohoku Univ.\\
 6-3,Aoba, Aramaki, Aoba-Ku, Sendai 980-8578, JAPAN}
\email{takuya.yamauchi.c3@tohoku.ac.jp}

\begin{abstract}
This paper is an extension of \cite{KWY} and we prove equidistribution theorems for families of holomorphic Siegel cusp forms of general degree in the level aspect. Our main contribution is to estimate unipotent contributions for general degree 
in the geometric side of Arthur's invariant trace formula in terms of 
Shintani zeta functions in a uniform way.   
Several applications 
including the vertical Sato-Tate theorem and low-lying zeros for standard $L$-functions of holomorphic Siegel cusp forms 
are discussed. 
We also show that  the ``non-genuine forms" which come from non-trivial 
endoscopic contributions by Langlands functoriality classified by 
Arthur are negligible.
\end{abstract}

\maketitle

\tableofcontents

\section{Introduction}
Let $G$ be a connected reductive group over $\Q$ and $\A$ the ring of adeles of $\Q$. 
An equidistribution theorem for a family of automorphic representations of $G(\A)$ is one of 
recent topics in number theory and automorphic representations. 
After Sauvageot's important results \cite{Sau},
Shin \cite{Shin} proved a so-called limit multiplicity formula which shows that 
the limit of an automorphic counting measure is the Plancherel measure. It implies the 
equidistribution of Hecke eigenvalues or Satake parameters at a fixed prime in a family of cohomological automorphic forms on 
$G(\A)$. 
A quantitative version of Shin's result is given by 
Shin and Templier \cite{ST}. A different approach is discussed in \cite{FLM} for $G=\GL_n$ or $\SL_n$ 
treating more general automorphic forms which are not necessarily cohomological. 
Note that in the works of Shin and Shin-Templier, one needs to consider all cuspidal representations in the $L$-packets. Shin suggested in \cite[the second paragraph in p. 88]{Shin} that one can isolate just holomorphic discrete series at infinity. In \cite{KWY,KWY1}, we carried out his suggestion and established equidistribution theorems for holomorphic Siegel cusp forms of degree 2. We should also mention Dalal's work \cite{Dalal} (cf. Remark \ref{STD}). See also some related works \cite{KL,KSTs}. 

In this paper we generalize several equidistribution theorems to holomorphic Siegel cusp forms of general degree. A main tool is Arthur's invariant trace formula as used in the previous 
work but we need a more careful analysis in the computation of unipotent contributions. 
Let us prepare some notations to explain our results. 

Let $G=\Sp(2n)$ be the symplectic group of rank $n$ defined over $\Q$. 
For an $n$-tuple of integers $\uk=(k_1,\ldots,k_n)$ with $k_1\ge \cdots \ge k_n>n+1$, let 
$D^{{\rm hol}}_{\underline{l}}=\sigma_\uk$ be the holomorphic discrete series representation  of $G(\R)$ with the Harish-Chandra parameter $\underline{l}=(k_1-1,\ldots,k_n-n)$ or 
the Blattner parameter $\uk$. 

Let $\A$ (resp. $\A_f$) be the ring of (resp. finite) adeles of $\Q$, and $\widehat{\Z}$ be the profinite completion of $\Z$.
For $S_1$ a finite set of rational primes, let $S=\{\infty\}\cup S_1$, and $\Q_{S_1}=\prod_{p\in S_1}\Q_p$, and $\A^{S}$ 
be the ring of adeles outside $S$ 
and $\widehat{\Z}^{S}=\prod_{p\not\in S_1}\Z_p$.  
We denote by $\widehat{G(\Q_{S_1})}$ the unitary dual of $G(\Q_{S_1})=\prod_{p\in S_1}G(\Q_p)$ equipped with the Fell topology. 
Fix a Haar measure $\mu^{S}$ on $G(\A^{S})$ so that 
$\mu^{S}(G(\widehat{\Z}^{S}))=1$, and let $U$ be a compact open subgroup of $G(\Bbb A^{S})$. 
Consider the algebraic representation $\xi=\xi_{\underline{k}}$ of the highest weight $\uk$ so that it is isomorphic to the minimal $K_\inf$-type of $D^{\rm hol}_\ul$. 
Let $h_U$ denote the characteristic function of $U$.
Then we define a measure on $\widehat{G(\Q_{S_1})}$ by 
\begin{equation}\label{mu}
\widehat{\mu}_{U,S_1,\xi,D^{\rm hol}_\ul}:
=\frac{1}{{\rm vol}(G(\Q)\bs G(\A))\cdot {\rm dim}\, \xi} \sum_{\pi^0_{S_1}\in \widehat{G(\Q_{S_1})}}
\mu^S(U)^{-1} \, m_{\rm cusp}(\pi^0_{S_1}; U,\xi,D^{\rm hol}_\ul) \, \delta_{\pi^0_{S_1}},
\end{equation} 
where $\delta_{\pi^0_{S_1}}$ is the Dirac delta measure supported at $\pi^0_{S_1}$, a unitary representation of $G(\Q_{S_1})$, and 
\begin{equation}\label{mult}
m_{\rm cusp}(\pi^0_{S_1}; U,\xi,D^{\rm hol}_\ul)=
\sum_{\pi\in \Pi(G(\A))^0\atop \pi_{S_1}\simeq \pi^0_{S_1},\,\pi_\infty\simeq D^{\rm hol}_\ul}m_{\rm cusp}(\pi) \, {\rm tr}(\pi^S(h_U)),
\end{equation} 
where $\Pi(G(\A))^0$ stands for the isomorphism classes of all irreducible unitary cuspidal representations of 
$G(\A)$, and $\pi^S=\otimes'_{p\notin S} \pi_p$.

To state the equidistribution theorem, we need to introduce the Hecke algebra $C^\infty_c(G(\Q_{S_1}))$ which is dense 
under the map $h\mapsto \widehat{h}$, where $\widehat{h}(\pi_{S_1})=
{\rm tr}(\pi_{S_1}(h))$ is in 
 $\mathcal{F}(\widehat{G(\Q_{S_1})})$ consisting of suitable $\widehat{\mu}^{{\rm pl}}_{S_1}$-measurable functions 
on $\widehat{G(\Q_{S_1})}$. (See \cite[Section 2.3]{Shin} for that space.)

Let $N$ be a positive integer. Put $S_N=\{p\ {\rm prime} :\ p|N\}$. We assume that $S_1\cap S_N=\emptyset$.  
We denote by $K_p(N)$ the the principal congruence subgroup of level $N$ for $G(\Z_p)$ (see \eqref{eq:prin} for the definition), and set $K^S(N)=\prod_{p\notin S}K_p(N)$. 
For each rational prime $p$, let us consider 
the unramified Hecke algebra $\mathcal{H}^{{\rm ur}}(G(\Q_p))\subset C^\infty_c(\Q_p)$ and 
for each $\kappa >0$, $\mathcal{H}^{{\rm ur}}(G(\Q_p))^\kappa$, the linear subspace of 
$\mathcal{H}^{{\rm ur}}(G(\Q_p))$ consisting of all Hecke elements whose heights are less than $\kappa$.  
(See (\ref{height}).) Let $\mathcal{H}^{{\rm ur}}(G(\Q_p))^\kappa_{\le 1}$ be the linear subspace of 
$\mathcal{H}^{{\rm ur}}(G(\Q_p))^\kappa$ consisting of all Hecke elements whose complex values have absolute values less than 1.
Our first main result is 

\begin{thm}\label{main1} Fix $\uk=(k_1,\ldots,k_n)$ satisfying $k_1\ge \cdots \ge k_n>n+1$. Fix a positive integer $\kappa$. 
Then there exist constants $a,b$, and $c_0\neq 0$  depending only on $G$ such that 
for each $h_1\in \otimes_{p\in S_1} \mathcal{H}^{{\rm ur}}(G(\Q_p))^\kappa_{\le 1}$, 

$$\widehat{\mu}_{K^S(N),S_1,\xi,D^{\rm hol}_{\ul}}(\widehat{h_1})=\widehat{\mu}^{{\rm pl}}_{S_1}\left(\widehat{h_1}\right)+
O\left(\left(\prod_{p\in S_1}p\right)^{a\kappa+b} N^{-n}\right),
$$
if $N\ge c_0 \prod_{p\in S_1}p^{2n\kappa}$. Note that the implicit constant of the Landau $O$-notation is independent of $S_1$, $N$ and $h_1$.  
\end{thm}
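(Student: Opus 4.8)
The plan is to derive the asymptotic from Arthur's invariant trace formula applied to the test function $f = f_\xi h_U$, where $U = K(N)$, $h_{S_1} = h_1$ at the places in $S_1$, and then to isolate the cuspidal contribution on the spectral side.

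\textbf{Step 1: Spectral side.} First I would expand the spectral side of the invariant trace formula for $f_\xi \otimes h_1 \otimes h_{K(N)}$. Because $f_\xi$ is a pseudo-coefficient of the holomorphic discrete series $D^{\mathrm{hol}}_{\ul}$, the contributions of the continuous spectrum and of the residual (non-cuspidal discrete) spectrum vanish or are controlled: the key input is that a pseudo-coefficient of a discrete series kills all tempered representations except those in the associated $L$-packet, and induced representations have vanishing trace against it away from the relevant Levi. After normalizing by $\mathrm{vol}(G(\Q)\bs G(\A)) \cdot \dim\xi$, the leading spectral term is exactly $\widehat{\mu}_{K(N),S_1,\xi,D^{\mathrm{hol}}_{\ul}}(\widehat{h_1})$ up to lower-order spectral pieces which must be bounded by $O((\prod_{p\in S_1}p)^{a\kappa+b}N^{-n})$; these bounds come from counting discrete series of the relevant Levi subgroups together with the dimension of the fixed-vector space of $K(N)$, which decays like a power of $N$ governed by the codimension of the unipotent radical, giving the exponent $n$.

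\textbf{Step 2: Geometric side.} Next I would turn to the geometric side $\sum_M |W_0^M||W_0^G|^{-1} \sum_{\gamma} a^M(S,\gamma)\, J_M(\gamma, f)$. The identity/central contribution gives the main term: $J_G(1,f) = \mathrm{vol}(G(\Q)\bs G(\A))\,\widehat{h}_1(\text{something})$ evaluated via the Plancherel formula, which after normalization yields $\widehat{\mu}^{\mathrm{pl}}_{S_1}(\widehat{h_1})$ — this is where $f_\xi$ being a pseudo-coefficient with $\tr(\pi_\infty(f_\xi))=1$ matches the formal degree and produces the Plancherel measure. The semisimple non-central, and more seriously the unipotent, orbital integrals must all be shown to be $O((\prod_{p\in S_1}p)^{a\kappa+b}N^{-n})$. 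For the semisimple classes one uses that $h_{K(N)}$ has small support and the orbital integral of $h_{K(N)}$ at a non-central semisimple element is bounded by a power of $N$ times the volume, beating $N^{-n}$ provided $N$ is large; the condition $N \ge c_0 \prod_{p\in S_1} p^{2n\kappa}$ ensures the support of $h_1$ at $S_1$ (whose size is governed by the height bound $\kappa$) does not interfere.

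\textbf{Step 3: Unipotent contributions — the main obstacle.} The hard part is bounding the unipotent contributions $\sum_{\mathcal{U}} a^M(S, u_{\mathcal{U}})\, J_M(u_{\mathcal{U}}, f)$ uniformly in $N$ (and in $h_1$) for general degree $n$. This is precisely the ``main contribution'' advertised in the abstract. The weighted unipotent orbital integrals $J_M(u_{\mathcal{U}}, f_\xi h_1 h_{K(N)})$ factor as a product over places; the archimedean factor involving $f_\xi$ is bounded using its explicit form, the factor at $S_1$ is bounded by a power of $\prod_{p\in S_1} p$ with exponent linear in $\kappa$ (this produces the $a\kappa + b$), and the factor at primes dividing $N$ is an orbital integral of the characteristic function of $K(N)$ along a unipotent orbit, which scales like $N^{-\dim(\text{orbit})}\cdot(\text{polynomial in }N)$; the slowest-decaying such term is a minimal unipotent orbit contribution, and one shows it is $O(N^{-n})$. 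The coefficients $a^M(S, u_{\mathcal{U}})$ are where the Shintani zeta functions enter: one expresses them (following Arthur's and earlier work on unipotent contributions, specialized here) in terms of special values of Shintani zeta functions associated to the relevant prehomogeneous vector spaces, and the uniform bound for these special values — together with the volume decay of $K(N)$-invariants — closes the estimate. Assembling Steps 1–3, the spectral main term equals the geometric main term $\widehat{\mu}^{\mathrm{pl}}_{S_1}(\widehat{h_1})$ up to the claimed error, which is the assertion of the theorem.
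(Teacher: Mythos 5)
Your overall architecture is the right one — spectral side reduces to the trace of the Hecke operator on cusp forms, the geometric side splits into identity/unipotent/other, the identity term gives the Plancherel main term, and the non-unipotent contributions vanish once $N \ge c_0 \prod p^{2n\kappa}$ via Shin--Templier's support argument. Step~2 is correct, although the semisimple non-central contributions do not merely decay faster than $N^{-n}$; they vanish identically under that hypothesis.

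The gap is in Step~3, and it is exactly the point the paper flags as the central difficulty. You propose to bound the unipotent part term by term: extract each coefficient $a^M(\tilde S,\gamma)$, relate it to a Shintani zeta special value, and multiply by a local-orbital-integral factorization of $J_M(\gamma,f)$. This cannot work as stated: the number of $(M,\tilde S)$-equivalence classes inside a given geometric unipotent orbit \emph{grows with $N$}, so a naive sum of individual bounds does not close. The paper's actual mechanism is different and is the content of Lemma~\ref{lem:210513l2}: the \emph{entire} unipotent contribution $I_{\mathrm{unip}}(f)$ is rewritten in one stroke as $\vol_G\, h_{S_0}(1)\, d_\uk + \frac{1}{2}\sum_{r=1}^n \sum_\chi Z_r(\Phi_{\tf,r},n-\tfrac{r-1}{2},\chi)$, a sum of \emph{global} zeta integrals for the prehomogeneous space of symmetric $r\times r$ matrices. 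This identity (built on Wakatsuki's dimension-formula calculations) sidesteps the $a^M$-versus-$J_M$ factorization entirely. Two further technical ingredients you do not mention are essential for it to go through: (i) the pseudo-coefficient $f_\uk$ must be replaced by the spherical trace function $\tf_\uk$ (a matrix coefficient, not compactly supported) — the paper states this change is ``essentially required''; and (ii) a functional-equation step (Lemma~\ref{lem:functzeta}) shifts the evaluation point from $n-\tfrac{r-1}{2}$ to $r-n$, where the Shintani zeta function $\zeta_r(\widehat{\Phi_{h,r}},\cdot)$ is holomorphic, which is what makes the regularization and the final estimates meaningful for $k_n>n+1$.

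Once in this form, the estimate $O(N_1^{a\kappa+b}N^{-n})$ is extracted from Saito's explicit formulas for the local zeta integrals, but the analysis is genuinely case-dependent in ways your outline would not predict: odd $r<n$ is direct, even $3<r<n$ requires holomorphy of the Shintani double zeta series $D^S_{m,u_S}(s)$ at nonpositive integers (Proposition~\ref{prop:1}), $r=2<n$ needs a separate argument via Yukie's functional equation for binary quadratic forms, and $r=n$ requires a vanishing lemma (Lemma~\ref{lem:20210602}) for the archimedean local zeta integral to handle a potential pole at $s=0$. So while you correctly identify that Shintani zeta functions must enter, the entry point and the subsequent bookkeeping in your proposal differ from the paper's, and the term-by-term route you sketch would stall at exactly the obstacle the paper is designed to overcome.
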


Let us apply this theorem to the vertical Sato-Tate theorem   
and higher level density theorem for standard $L$-functions of holomorphic Siegel cusp forms.

The principal congruence subgroup $\Gamma(N)$ of level $N$ for $G(\Z)$ is obtained by $\Gamma(N)=G(\Bbb Q)\cap G(\Bbb R)K(N)$ where $K(N)=\prod_{p<\inf} K_p(N)$. 
Let $S_\uk(\G(N))$ be the space of holomorphic Siegel 
cusp forms of weight $\uk$ with respect to $\Gamma(N)$ (see the next section for a precise definition), and let 
$HE_\uk(N)$ be a basis consisting of all Hecke eigenforms outside $N$. We can identify $HE_\uk(N)$ with a basis of $K(N)$-fixed vectors in the set of cuspidal representations of $G(\Bbb A)$ whose infinity component is (isomorphic to) $D^{\rm hol}_{\ul}$. (See the next section for the detail.)
Put $d_\uk(N)=|HE_\uk(N)|$. Then we have \cite{Wakatsuki}, for some constant $C_\uk>0$, 
\begin{equation}\label{dimension}
d_{\uk}(N)=C_{\uk} C_N N^{2n^2+n}+O_{\uk}(N^{2n^2}),
\end{equation}
where $\ds C_N=\prod_{p|N} \prod_{i=1}^n (1-p^{-2i})$. Note that $\ds\prod_{i=1}^n \zeta(2i)^{-1}< C_N< 1$.

For each $F\in HE_\uk(N)$, we denote by 
$\pi_F=\otimes'_p\pi_{F,p}$ the corresponding automorphic cuspidal representation of $G(\A)$. 
Henceforth we assume that 
\begin{equation}\label{suff-reg}
k_1>\cdots>k_n>n+1.
\end{equation} 
Then the Ramanujan conjecture is true, namely, $\pi_{F,p}$ is tempered for any $p$ 
(see Theorem \ref{Ram}). 
Unfortunately, this assumption forces us to exclude the scalar-valued Siegel cusp forms.

Let $\widehat{G(\Q_p)}^{{\rm ur,temp}}$ be the subspace of $\widehat{G(\Q_p)}$ consisting of 
all unramified tempered classes. We denote by $(\theta_1(\pi_{F,p}),\ldots,\theta_n(\pi_{F,p}))$ the element of $\Omega$ corresponding to $\pi_{F,p}$ 
under the isomorphism
$\widehat{G(\Q_p)}^{{\rm ur,temp}}\simeq [0,\pi]^n/\mathfrak S_n=:\Omega$. Let $\mu_p$ be the measure on $\Omega$ defined in Section \ref{vertical}.

\begin{thm}\label{Sato-Tate-tm} Assume (\ref{suff-reg}). Fix a prime $p$.
Then the set  $\left\{(\theta_1(\pi_{F,p}),\ldots,\theta_n(\pi_{F,p}))\in \Omega\ |\ F\in HE_\uk(N) \right\}$ is 
$\mu_p$-equidistributed in $\Omega$, namely, for each continuous function $f$ on $\Omega$, 
$$\lim_{N\to \infty \atop (p,N)=1}\frac{1}{d_\uk(N)}\sum_{F\in HE_\uk(N)}f(\theta_1(\pi_{F,p}),\ldots,\theta_n(\pi_{F,p}))=
\int_{\Omega}f(\theta_1,\ldots,\theta_n)\mu_p.
$$
\end{thm}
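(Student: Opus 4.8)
The plan is to deduce Theorem \ref{Sato-Tate-tm} from the quantitative equidistribution statement of Theorem \ref{main1} applied with $S_1=\{p\}$, together with the dimension asymptotic \eqref{dimension} and the Ramanujan property guaranteed by \eqref{suff-reg}. First I would recall that under the regularity hypothesis \eqref{suff-reg}, every $\pi_{F,p}$ for $F\in HE_\uk(N)$ is unramified (since $p\nmid N$ and $F$ is a Hecke eigenform outside $N$) and tempered (Theorem \ref{Ram}), so the associated Satake parameter genuinely lands in $\widehat{G(\Q_p)}^{\mathrm{ur,temp}}\simeq\Omega$. Hence for a test function $h_1\in\mathcal H^{\mathrm{ur}}(G(\Q_p))$ the counting measure $\widehat\mu_{K(N),\{p\},\xi,D^{\rm hol}_\ul}$ evaluated at $\widehat{h_1}$ is, up to the normalizing constant $\tfrac{1}{\vol(G(\Q)\bs G(\A))\dim\xi}$ and the factor $\mu^S(K(N))$, exactly $\sum_{F\in HE_\uk(N)}\widehat{h_1}(\pi_{F,p})$. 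The key point is that $\mu^S(K(N))^{-1}=[G(\widehat\Z^S):K(N)]$ is, up to the Euler factors at primes dividing $N$, of size $N^{\dim G}=N^{2n^2+n}$, and this matches the leading term of $d_\uk(N)$ in \eqref{dimension} up to the explicit constant $C_\uk C_N$; thus after dividing by $d_\uk(N)$ the normalizations combine to give a clean limit.

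Next I would carry out the following steps in order. (i) Fix a continuous $f$ on $\Omega$; by Stone--Weierstrass and the density of $\{\widehat{h_1}:h_1\in\mathcal H^{\mathrm{ur}}(G(\Q_p))\}$ in $\mathcal C(\Omega)$ (the Satake transform identifies the unramified Hecke algebra with symmetric Laurent polynomials, whose restriction to the compact tempered locus $\Omega$ is uniformly dense), it suffices to prove the limit formula for $f=\widehat{h_1}$ with $h_1$ a fixed element of $\mathcal H^{\mathrm{ur}}(G(\Q_p))$. (ii) For such a fixed $h_1$ there is a fixed $\kappa$ with $h_1\in\mathcal H^{\mathrm{ur}}(G(\Q_p))^{\kappa}_{\le1}$ after scaling by a constant depending only on $h_1$; apply Theorem \ref{main1} with this $S_1=\{p\}$ and $\kappa$, obtaining
\[
\widehat\mu_{K(N),\{p\},\xi,D^{\rm hol}_\ul}(\widehat{h_1})=\widehat\mu^{\mathrm{pl}}_{\{p\}}(\widehat{h_1})+O_{h_1}\!\left(N^{-n}\right),\qquad N\to\infty,\ (p,N)=1.
\]
(iii) Unwind the left-hand side: by \eqref{mu} and \eqref{mult}, using that all relevant $\pi_{F,p}$ are unramified tempered and that $h_U=h_{K(N)}$ is the characteristic function of $K(N)$ (so $\tr(\pi^S(h_{K(N)}))$ counts the $K(N)$-fixed line), one gets
\[
\widehat\mu_{K(N),\{p\},\xi,D^{\rm hol}_\ul}(\widehat{h_1})=\frac{\mu^S(K(N))}{\vol(G(\Q)\bs G(\A))\,\dim\xi}\sum_{F\in HE_\uk(N)}\widehat{h_1}(\pi_{F,p}).
\]
(iv) Divide the identity of \eqref{dimension} and rearrange to express $\frac{1}{d_\uk(N)}\sum_{F}\widehat{h_1}(\pi_{F,p})$ as $\frac{\vol(G(\Q)\bs G(\A))\dim\xi}{\mu^S(K(N))\,d_\uk(N)}\bigl(\widehat\mu^{\mathrm{pl}}_{\{p\}}(\widehat{h_1})+O(N^{-n})\bigr)$, and observe that the prefactor converges to a positive constant $c(\uk,p)$ as $N\to\infty$ because both $\mu^S(K(N))^{-1}$ and $d_\uk(N)$ grow like $N^{2n^2+n}$ times comparable products of local factors. (v) Identify the constant: testing with $h_1=\mathbbm1$ (so $\widehat{h_1}\equiv1$), the left-hand side limit is $1$ by definition of $d_\uk(N)$, while $\widehat\mu^{\mathrm{pl}}_{\{p\}}(\widehat{\mathbbm1})=\widehat\mu^{\mathrm{pl}}_{\{p\}}(\Omega)$ after restricting the Plancherel measure to the unramified tempered locus (the pseudo-coefficient choice forces support there); hence $c(\uk,p)=1/\widehat\mu^{\mathrm{pl}}_{\{p\}}(\Omega)$ and the limit equals $\widehat\mu^{\mathrm{pl}}_{\{p\}}(\widehat{h_1})/\widehat\mu^{\mathrm{pl}}_{\{p\}}(\Omega)$, which by the definition of $\mu_p$ in Section \ref{vertical} (the pushforward to $\Omega$ of the normalized Plancherel measure) is exactly $\int_\Omega \widehat{h_1}\,\mu_p$. (vi) Conclude by density.

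The main obstacle, and the point deserving the most care, is the bookkeeping in steps (iv)--(v): matching the two independent normalizations — the trace-formula normalization $\mu^S(K(N))/(\vol(G(\Q)\bs G(\A))\dim\xi)$ coming from \eqref{mu} and the arithmetic count $d_\uk(N)$ of \eqref{dimension} — and verifying that their ratio has a genuine positive limit rather than merely being bounded. This requires knowing that the index $[G(\widehat\Z^S):K(N)]$ has the same leading power of $N$ and the same local Euler factors $C_N=\prod_{p\mid N}\prod_{i=1}^n(1-p^{-2i})$ that appear in \eqref{dimension}; here the relevant local volume computations for the principal congruence subgroups of $\Sp(2n)$ enter, and the error term $O_\uk(N^{2n^2})$ in \eqref{dimension} is harmless against the main term of size $N^{2n^2+n}$, exactly as the $O(N^{-n})$ in Theorem \ref{main1} is (after multiplying through by the $N^{2n^2+n}$ prefactor, the error becomes $O(N^{2n^2})$, which is $o(d_\uk(N))$). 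A secondary point is justifying the reduction to unramified tempered test functions cleanly: one must check that the Plancherel measure $\widehat\mu^{\mathrm{pl}}_{\{p\}}$, when paired with $\widehat{h_1}$ for $h_1$ unramified, is supported on the tempered unramified locus and restricts there to (a constant multiple of) $\mu_p$; this is where the precise definition of $\mu_p$ from Section \ref{vertical} and the standard description of the spherical Plancherel measure via the Macdonald formula are invoked.
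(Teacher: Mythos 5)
Your proposal is correct and follows essentially the same route as the paper; the paper's own proof is just the one-line citation ``Theorem \ref{Sato-Tate-tm} and Theorem \ref{finer-ver} follow from Theorem \ref{main} and Theorem \ref{non-simple}'' at the end of Section \ref{vertical}, where only Theorem \ref{main} (the $S_1=\{p\}$ case of Theorem \ref{main1}) is actually needed for Theorem \ref{Sato-Tate-tm}. What you have supplied is the standard unwinding: density of Satake transforms in $C(\Omega)$ (via the Satake isomorphism plus Stone--Weierstrass), the observation that Ramanujan (Theorem \ref{Ram}) plus $p\nmid N$ puts every $\pi_{F,p}$ in $\Omega$, and the normalization bookkeeping matching $d_\uk(N)$ against $\vol(K(N))^{-1}$ and $\dim\xi_\uk=d_\uk$.

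Two small remarks on presentation. First, in step (v) you identify the constant by testing with the characteristic function $\mathbbm 1_{K_p}$; it would be cleaner to note directly that the paper uses the Plancherel formula $\widehat\mu^{\rm pl}_p(\widehat h)=h(1)$ together with $\widehat\mu^{\rm pl}_p(\Omega)=1$ (the Plancherel mass of the unramified tempered locus is $\mathbbm 1_{K_p}(1)=1$), so $\mu_p$ is already a probability measure and no extra normalization constant appears; this makes the ``$c(\uk,p)=1$'' identification immediate and replaces your slightly roundabout phrasing about the pseudo-coefficient ``forcing support'' on the tempered locus, which is not quite the mechanism (Plancherel is supported on tempered representations by definition, and it is Ramanujan for the $\pi_{F,p}$, not the pseudo-coefficient, that keeps the counting measure there). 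Second, your prefactor convergence in step (iv) implicitly relies on the compatibility $C_\uk=\vol_G\,d_\uk$ between the constant in \eqref{dimension} and the main term of Theorem \ref{thm:asy} (equivalently, taking $h_1=\mathbbm 1_{K_p}$ in Theorem \ref{thm:asy} recovers \eqref{dimension}); stating this explicitly would close the loop more transparently than the appeal to local volume computations.
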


By using Arthur's endoscopic classification, we have a finer version of the above theorem.
Under the assumption (\ref{suff-reg}), the global $A$-parameter describing $\pi_F,\ F\in HE_\uk(N)$, 
is always semi-simple. (See Definition \ref{cap-endo}.) Let $HE_\uk(N)^{{g}}$ be the subset of $HE_\uk(N)$ consisting of $F$ such that the global $A$-packet 
containing $\pi_F$ is associated to a simple global $A$-parameter. They are Siegel cusp forms which do not come from smaller 
groups by Langlands functoriality in Arthur's classification. In this paper, we call them genuine forms. Let $HE_\uk(N)^{{ng}}$ be the subset of $HE_\uk(N)$ consisting of $F$ such that the global $A$-packet 
containing $\pi_F$ is associated to a non-simple global $A$-parameter, i.e., they are Siegel cusp forms which come from smaller 
groups by Langlands functoriality in Arthur's classification. We call them non-genuine forms. We show that non-genuine forms are negligible. For this, we need some further assumptions on the level $N$. 

\begin{thm}\label{finer-ver} Assume (\ref{suff-reg}). We also assume:
\begin{enumerate} 
\item $N$ is an odd prime or 
\item $N$ is odd and all prime divisors $p_1,\ldots,p_r\ (r\ge 2)$ of $N$ are 
congruent to 1 modulo 4 such that 
$\Big(\ds\frac{p_i}{p_j}\Big)=1$ for $i\ne j$,
where $\Big(\ds\frac{\ast}{\ast}\Big)$ denotes the Legendre symbol. 
\end{enumerate}
Then 
\begin{enumerate}
\item $\left|HE_\uk(N)^{{g}}\right|=C_{\uk}C_N N^{2n^2+n}+O_{n,\uk,\epsilon}\left(N^{2n^2+n-1+\epsilon}\right)$ for any $\epsilon>0$; 
\item $\left|HE_\uk(N)^{{ng}}\right|=O_{n,\uk,\epsilon}\left(N^{2n^2+n-1+\epsilon}\right)$ for any $\epsilon>0$;
\item for a fixed prime $p$, the set $\left\{(\theta_1(\pi_{F,p}),\ldots,\theta_n(\pi_{F,p}))\in \Omega\ |\ F\in HE_\uk(N)^{{g}} \right\}$ is 
$\mu_p$-equidistributed in $\Omega$. 
\end{enumerate}
\end{thm}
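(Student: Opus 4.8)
The strategy is to combine the equidistribution input from Theorem \ref{main1} with Arthur's endoscopic classification for $\Sp(2n)$, using the arithmetic hypotheses on $N$ to control which endoscopic (i.e. non-simple $A$-parameter) contributions can occur at a given level. First I would recall that under \eqref{suff-reg} the global $A$-parameter of any $\pi_F$ is semisimple, so $\pi_F$ lies in a global $A$-packet attached to a formal sum $\psi = \boxplus_{i} \pi_i[d_i]$ of pairs, where each $\pi_i$ is a cuspidal automorphic representation of $\GL_{m_i}(\A)$ of symplectic or orthogonal type; the genuine forms are exactly those with $\psi$ simple (one summand, $d=1$), and the non-genuine ones have at least two summands or a nontrivial Arthur $\SL_2$. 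The key point is that for a non-genuine $\pi_F$ of level dividing $N$, the conductor of each constituent $\pi_i$ divides $N$ (a suitable power), and the infinitesimal character of $\pi_i$ at infinity is pinned down by $\ul=(k_1-1,\dots,k_n-n)$ together with the shape of $\psi$. Since \eqref{suff-reg} makes $\ul$ regular, there are only finitely many possible $A$-parameters $\psi$ with a fixed archimedean infinitesimal character, and for each the number of such $\pi_F$ of level $N$ is bounded by a product of dimension counts for the smaller groups, which are $O_{n,\uk,\epsilon}(N^{2n^2+n-1+\epsilon})$ by \eqref{dimension} applied to those groups (the exponent drops because the smaller groups have strictly smaller dimension). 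This is where the hypotheses (1) and (2) enter: when $N$ is an odd prime, or an odd product of primes $\equiv 1\bmod 4$ with all Legendre symbols $\left(\frac{p_i}{p_j}\right)=1$, the only quadratic characters ramified inside $N$ whose associated quadratic forms/spaces can support the relevant endoscopic groups at level $N$ are severely restricted, ensuring that no ``large'' endoscopic contribution (one that would match $N^{2n^2+n}$) can occur; in particular the Yoshida-type and Saito--Kurokawa-type degenerate parameters that would otherwise give a main-order term are excluded.

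With that in hand, part (2) follows: $|HE_\uk(N)^{ng}|$ is a finite sum (over the allowed non-simple $\psi$) of counts each $O_{n,\uk,\epsilon}(N^{2n^2+n-1+\epsilon})$. Part (1) is then immediate by subtracting from \eqref{dimension}: $|HE_\uk(N)^g| = d_\uk(N) - |HE_\uk(N)^{ng}| = C_\uk C_N N^{2n^2+n} + O_{n,\uk,\epsilon}(N^{2n^2+n-1+\epsilon})$. For part (3), fix a prime $p$ with $(p,N)=1$ and a continuous $f$ on $\Omega$; approximate $f$ in the appropriate sense by $\widehat{h_1}$ for $h_1\in \mathcal H^{\mathrm{ur}}(G(\Q_p))^\kappa_{\le 1}$ (density of the Hecke algebra as noted before Theorem \ref{main1}), and write
\begin{equation*}
\frac{1}{d_\uk(N)}\sum_{F\in HE_\uk(N)^g} f\bigl(\theta(\pi_{F,p})\bigr)
= \frac{1}{d_\uk(N)}\sum_{F\in HE_\uk(N)} f\bigl(\theta(\pi_{F,p})\bigr)
- \frac{1}{d_\uk(N)}\sum_{F\in HE_\uk(N)^{ng}} f\bigl(\theta(\pi_{F,p})\bigr).
\end{equation*}
The first term converges to $\int_\Omega f\,\mu_p$ by Theorem \ref{Sato-Tate-tm} (equivalently, by Theorem \ref{main1} with $S_1=\{p\}$ after normalizing by $d_\uk(N)$ using \eqref{dimension} and the temperedness from Theorem \ref{Ram}), while the second term is bounded by $\|f\|_\infty\, |HE_\uk(N)^{ng}|/d_\uk(N) = O_{n,\uk,\epsilon}(N^{-1+\epsilon}) \to 0$ by parts (2) and \eqref{dimension}. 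Hence the genuine forms alone are $\mu_p$-equidistributed.

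The main obstacle is part (2): proving that \emph{every} non-simple $A$-parameter contributing at level $N$ produces only $O(N^{2n^2+n-1+\epsilon})$ forms, uniformly, and in particular that none of them reaches the main order. This requires (a) a clean description, via Arthur's multiplicity formula, of which $\pi_F$ with $\pi_\infty \simeq D^{\mathrm{hol}}_{\ul}$ actually occur in a given non-simple packet — including the sign/character conditions on the component group that cut down the count — and (b) controlling the conductors: one must show the constituents $\pi_i$ of $\psi$ have conductor dividing a bounded power of $N$, and then invoke dimension formulas (or Weyl-law/Arthur-trace-formula bounds) for the relevant $\GL_{m}$ and classical groups of smaller rank, these being $o(N^{2n^2+n})$. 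The number-theoretic hypotheses on $N$ are exactly what make step (b) tractable: they limit the ramified quadratic data, hence the endoscopic groups and their levels, so that only ``small'' groups contribute. I would organize the argument by first treating $N$ prime (hypothesis (1)), where the endoscopic bookkeeping is cleanest, and then extending to the multiplicative case (2) by the same local analysis prime-by-prime, using the Legendre-symbol condition to rule out the composite quadratic characters that would otherwise enlarge the family.
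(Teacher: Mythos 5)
Your overall skeleton is correct and matches the paper: part (2) is where all the work lies (it is Theorem \ref{non-simple} in the text), part (1) follows by subtracting from the dimension formula \eqref{dimension}, and part (3) follows by sandwiching the genuine sum between the full-family equidistribution of Theorem \ref{Sato-Tate-tm} and the $O(N^{-1+\epsilon})$ error controlled by part (2). All of this is exactly how the paper derives the theorem in Section 7.

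However, your explanation of why the arithmetic hypotheses on $N$ are needed is off, and this is a genuine gap in the argument you sketch for part (2). You suggest the hypotheses ``severely restrict'' which quadratic characters ramified in $N$ can support endoscopic contributions, and that this rules out a potential main-order term. That is not what is happening: even without any hypothesis on $N$, no non-simple $A$-parameter can produce a main-order count, since the relevant groups have strictly smaller dimension. The hypotheses are instead a purely technical device needed to carry out the counting for one case. Concretely, the paper reduces $|HE_\uk(N)^{ng}|$ (via the transfer of $1_{K(N)}$ from $\Sp(2n)$ to $\GL_{2n+1}$, Proposition \ref{transfer}) to products of counts $l^{\rm cusp,ort}(m_i,N,\tau_i,\chi_i)$ of orthogonal cuspidal representations of $\GL_{m_i}$. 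When $\chi_i$ is trivial these descend to split $\Sp$ or $\SO(m,m)$, where the needed Hecke-element transfer and limit-multiplicity bound (Savin) are available; but when $\chi_i$ is a nontrivial quadratic character the descent is to the quasi-split $\SO(m+1,m-1)$, where the transfer theorem for Hecke elements is \emph{not} available. The paper gets around this in Proposition \ref{fixedV} by base-changing to the quadratic field $M=\Q(\sqrt{d})$, where the group becomes split, using Yamauchi's base-change transfer for Hecke elements. The hypotheses on $N$ (odd prime, or all prime divisors $\equiv 1 \bmod 4$ with pairwise Legendre symbols $1$) are exactly what guarantees that for every such quadratic $M$ of conductor dividing $N$ there is an ideal $\mathfrak N$ of $\mathcal O_M$ with $\mathfrak N\mathfrak N^\theta=(N)$, which is needed to match up principal congruence levels under base change. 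Your proposal does not identify this obstruction (the missing transfer for quasi-split orthogonal groups) nor the base-change workaround, so the step where the hypotheses on $N$ actually bite is not supplied. Everything else in your plan — the role of \eqref{suff-reg} to force semisimple parameters, the bookkeeping over partitions of $2n+1$, the use of smaller-group dimension formulas — is in line with the paper.
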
  

The above assumptions on the level $N$ are necessary in order to estimate non-genuine 
forms related to non-split but quasi split orthogonal groups in the Arthur's classification  by using the transfer theorems for some Hecke elements in
 the quadratic base change in the ramified case \cite{Yamauchi}. (See Proposition \ref{fixedV} for the detail.)

Next, we discuss $\ell$-level density ($\ell$ positive integer) for standard L-functions in the level aspect. 
Let us denote by $ \Pi({\rm GL}_{n}(\A))^0$ the set of all isomorphism classes of 
irreducible unitary cuspidal representations of ${\rm GL}_{n}(\A)$. 
Keep the assumption on $\uk$ as in (\ref{suff-reg}) and the above assumption on the level $N$. Then $F$ can be described by a global $A$-parameter $\boxplus_{i=1}^r\pi_i$ 
with $\pi_i\in \Pi({\rm GL}_{m_i}(\A))^0$ and $\ds\sum_{i=1}^rm_i=2n+1$. 
Then we may define the standard $L$-function of $F\in HE_\uk(N)$ by 
$$L(s,\pi_F,{\rm St}):=\prod_{i=1}^r L(s,\pi_i)$$
which coincides with the classical definition in terms of Satake parameters of $F$ outside $N$. 
Then we show unconditionally that the $\ell$-level density of the standard $L$-functions of the family 
$HE_\uk(N)$ has the symmetry type $Sp$ in the level aspect. (See Section \ref{r-level} for the precise statement. Shin and Templier \cite{ST} showed it under several hypotheses for a family which includes non-holomorphic forms.)
Here in order to obtain lower bounds for conductors, it is necessary to introduce a concept of newforms. This may be of independent interest. 
Since any local newform theory for $\Sp(2n)$ is unavailable except for $n=1,2$, we define the old space
$S^{{\rm old}}_\uk(\Gamma(N))$ to be the intersection of 
$S_\uk(\Gamma(N))$ with the
smallest $G(\A_f )$-invariant space of functions on $G(\Q)\bs G(\A)$ containing 
$S_\uk(\Gamma(M))$
for all proper divisors $M$ of $N$. The new space $S^{{\rm new}}_\uk(\Gamma(N))$ is the 
 orthogonal complement of $S^{{\rm old}}_\uk(\Gamma(N))$ in  
$S_\uk(\Gamma(N))$ with respect to Petersson inner product. 
Then if $F\in S^{{\rm new}}_\uk(\Gamma(N))$, $q(F)\geq N^\frac 12$ (Theorem \ref{stan-conductor}), and if $N$ is square free, we can show that $\text{dim}\, S^{{\rm new}}_\uk(\Gamma(N))\geq \zeta(n^2)^{-1} d_\uk(N)$ if $n\geq 2$ (Theorem \ref{newforms}).

As a corollary, we obtain a result on the order of vanishing of $L(s,\pi_F,{\rm St})$ at $s=\frac 12$, the center of symmetry of the $L$-function by using the method of 
\cite{ILS} for holomorphic cusp forms on $\GL_2(\A)$ (see also \cite{Brumer} for another formulation related to Birch-Swinnerton-Dyer conjecture): Let $r_F$ be the order of vanishing of $L(s,\pi_F,{\rm St})$ at $s=\frac 12$.
Then we show that under GRH (generalized Riemann hypothesis), $\displaystyle\sum_{F\in HE_\uk(N)} r_F\leq C d_\uk(N)$ for some constant $C>0$. This would be the first result of this kind in Siegel modular forms.
We can also show a similar result for the degree 4 spinor $L$-functions of $\GSp(4)$.

Let us explain our strategy in comparison with the previous works. 
We choose a test function
\[
f=\mu^S(K(N))^{-1}f_\xi h_1 h_{K^S(N)}\in C^\infty_c(G(\R))\otimes \left( \otimes_{p\in S_1} \cH^\ur (G(\Q_p))^\kappa_{\le 1} \right) \otimes  C^\infty_c(G(\A^S))
\]
such that $f_\xi$ is a pseudo-coefficient of $D^{\rm hol}_\ul$ normalized as $\tr(\pi_\infty(f_\xi))=1$.
A starting main equality is 
$$
I_{{\rm spec}}(f)=I(f)=I_{{\rm geom}}(f),
$$ 
where $I_{{\rm spec}}(f)$ (resp. $I_{{\rm geom}}(f)$) is the spectral (resp. the geometric) side of the 
Arthur's invariant trace $I(f)$. 
Under the assumption $k_n>n+1$, the spectral side becomes simple by 
the results of \cite{Arthur} and \cite{Hiraga}, and it is directly 
related to $S_\uk(\Gamma(N))$ because of the choice of a pseudo-coefficient of $D^{{\rm hol}}_\ul$.
Now the geometric side is given by:
\begin{equation}\label{geom}
I_{{\rm geom}}(f)=\sum_{M\in\cL}(-1)^{\dim(A_M/A_G)}\frac{|W_0^M|}{|W_0^G|}
\sum_{\gamma\in (M(\Q))_{M,\tilde{S}}}a^M(\tilde{S},\gamma)\, I_M^G(\gamma,f_\xi)\, J_M^M(\gamma,h_P),
\end{equation}
($\tilde{S}=\{\inf\}\sqcup S_N \sqcup S_1$), where 
$(M(\Q))_{M,\tilde{S}}$ denotes the set of $(M,\tilde{S})$-equivalence classes in $M(\Q)$ (cf. \cite[p.113]{Arthur2}); 
for each $M$ in a finite set $\cL$ we choose a parabolic subgroup $P$ such that $M$ is a Levi subgroup of $P$. (See loc.cit. for details.) 
Roughly speaking,
\begin{itemize}

\item If the test function $f$ is fixed, the terms on \eqref{geom} vanish except for a finite number of $(M,\tilde{S})$-equivalence classes.

\item the factor $a^M(\tilde{S},\gamma)$ is called a global coefficient and it is almost the volume of the centralizer of $\gamma$ in $M$ if $\gamma$ is semisimple. The general properties are unknown;

\item the factor $I_M^G(\gamma,f_\xi)$ is called an invariant weighted orbital integral, and as the notation shows, it strongly depends on the weight $\uk$ of $\xi=\xi_\uk$. Therefore, it is negligible when we consider the level aspect;
   
\item the factor $J_M^M(\gamma,h_P)$ is an orbital integral of $\gamma$ for $h=\mu^S(K(N))^{-1}h_1h_{K^S(N)}$.

\end{itemize} 
According to the types of conjugacy classes and $M$, the geometric side is divided into the following terms:
$$I_{{\rm geom}}(f)=I_1(f)+I_2(f)+I_3(f)+I_4(f)
$$
where 
\begin{itemize}
\item $I_1(f)$: $M=G$ and $\gamma=1$;
\item $I_2(f)$: $M\neq G$ and $\gamma=1$;
\item $I_3(f)$: $\gamma$ is unipotent but $\gamma\neq 1$;
\item $I_4(f)$: the other contributions. 
\end{itemize}

The first term $I_1(f)$ is $f(1)$ up to constant factors and the Plancherel formula 
$\widehat{\mu}^{{\rm pl}}_{S_1}(\widehat{f})=f(1)$ yields the first term of the equality in Theorem \ref{main1}. 
The condition $N\ge c_0 \prod_{p\in S_1}p^{2n\kappa}$ in Theorem \ref{main1} implies that the non-unipotent contribution $I_4(f)$ vanishes by \cite[Lemma 8.4]{ST}.
Therefore, everything is reduced to studying the unipotent contributions $I_2(f)$ and $I_3(f)$. 
An explicit bound for $I_2(f)$ was given by \cite[Proof of Theorem 9.16]{ST}.
However, as for $I_3(f)$, since the number of $(M,\tilde{S})$-equivalence classes in the geometric unipotent conjugacy class of each $\gamma$ is increasing when $N$ goes to infinity, it is difficult to estimate $I_3(f)$ directly.
In the case of $\GSp(4)$, we computed unipotent contributions by using case by case analysis in \cite{KWY}.
Here we give a new uniform way to estimate all the unipotent contributions. 
It is given by a sum of special values of zeta integrals with real characters for spaces of symmetric matrices (see Lemma \ref{lem:210513l2} and Theorem \ref{thm:tr}).
This formula is a generalization of the dimension formula (cf. \cite{Shintani, Wakatsuki}) to the trace formula of Hecke operators. 
By using their explicit formulas \cite{Saito2} and analyzing Shintani double zeta functions \cite{KTW}, we express the geometric side as a finite sum of products of local integrals and special values of the Hecke $L$ functions with real characters, and then obtain the estimates of the geometric side (see Theorem \ref{thm:asy}). 

This paper is organized as follows. In Section 2, we set up some notations. 
In Section 3, we give key results (Theorem \ref{thm:tr}) and Theorem \ref{thm:asy}) 
in estimating trace formulas of Hecke elements. In Section 4, we study Siegel modular forms in terms of Arthur's classification and show that non-genuine forms are negligible. In Section 5, we give a notion of newforms which is necessary to estimate conductors.
Section 6 through 10 are devoted to proving the main theorems. Finally, in the Appendix, we give 
an explicit computation of the convolution product of some Hecke elements which is needed in 
the computation of $\ell$-level density of standard $L$-functions. 

\medskip

\textbf{Acknowledgments.} We would like to thank M. Miyauchi, M. Oi, S. Sugiyama, and M. Tsuzuki for helpful discussions. We thank KIAS in Seoul and RIMS in Kyoto for their incredible hospitality during this research. We thank the referees for their helpful remarks and corrections. 

\medskip

\section{Preliminaries}\label{sec:pre}

A split symplectic group $G=\Sp(2n)$ over the rational number field $\Q$ is defined by
\[
G=\Sp(2n)=\left\{g\in\GL_{2n} \Bigg|\ g\begin{pmatrix} O_n&I_n \\ -I_n & O_n \end{pmatrix} {}^t\!g=\begin{pmatrix} O_n&I_n \\ -I_n & O_n \end{pmatrix}  \right\}.
\]
The compact subgroup
\[
K_\inf=\left\{ \begin{pmatrix}A&-B \\ B & A \end{pmatrix}\in G(\R) \right\}
\]
of $G(\Bbb R)$ is isomorphic to the unitary group $\U(n)$ via the mapping  $ \begin{pmatrix}A&-B \\ B & A \end{pmatrix} \mapsto 
A + i B$, where $i=\sqrt{-1}$. For each rational prime $p$, 
we also set $K_p=G(\Z_p)$ and put $K=\prod_{p\le \infty} K_p$. The compact groups $K_v$ and $K$ are maximal in $G(\Q_v)$ and $G(\A)$, resp.

Holomorphic discrete series of $G(\R)$ are parameterized by $n$-tuples $\underline{k}=(k_1,\dots,k_n)\in \Z^n$ such that $k_1\geq\cdots\geq k_n>n$, which is called the Blattner parameter.
We write $\sigma_\uk$ for the holomorphic discrete series corresponding to the Blattner parameter $\uk=(k_1,\ldots,k_n)$. We also write $D^{{\rm hol}}_\ul$ for one corresponding to the Harish-Chandra parameter $\ul=(k_1-1,k_2-2,\ldots,k_n-n)$ so that $D^{{\rm hol}}_\ul=\sigma_{\uk}$. 

Let $\cH^\ur(G(\Q_p))$ denote the unramified Hecke algebra over $G(\Q_p)$, that is,
\[
\cH^\ur(G(\Q_p))=\{\varphi\in C_c^\inf(G(\Q_p)) \mid \varphi(k_1xk_2)=\varphi(x) \quad (\forall k_1,k_2\in K_p , \;\; \forall x\in G(\Q_p)) \}.
\]
Let $T$ denote the maximal split $\Q$-torus of $G$ consisting of diagonal matrices.
We denote by $X_*(T)$ the group of cocharacters on $T$ over $\Q$.
An element $e_j$ in $X_*(T)$ is defined by
\begin{equation}\label{ejx}
e_j(x)=\diag(\overbrace{1,\dots,1}^{j-1},x,\overbrace{1\dots,1}^{n-j+1},\overbrace{1,\dots,1}^{j-1},x^{-1},\overbrace{1,\dots,1}^{n-j+1})\in T \quad (x\in \mathbb{G}_m).
\end{equation}
Then, one has $X_*(T)=\langle e_1,\dots,e_n \rangle$.
By the Cartan decomposition, any function in $\cH^\ur(G(\Q_p))$ is expressed by a linear combination of characteristic functions of double cosets $K_p\lambda(p)K_p$ $(\lambda\in X_*(T))$.
A height function $\|\; \|$ on $X_*(T)$ is defined by
\[
\left\|\prod_{j=1}^n e_j^{m_j}\right\|=\max\{ |m_j| \mid 1\leq j\leq n\} , \qquad (m_j\in\Z).
\]
For each $\kappa\in\N$, we set
\begin{equation}\label{height}
\cH^\ur(G(\Q_p))^\kappa =\Big\{ \varphi\in \cH^\ur(G(\Q_p)) \mid \mathrm{Supp}(\varphi) \subset \bigcup_{\mu\in X_*(T), \; \|\mu\|\leq \kappa } K_p\mu(p)K_p \Big\}.
\end{equation}

Choose a natural number $N$.
We set
\begin{equation}\label{eq:prin}
K_p(N)=\{x\in K_p \mid x\equiv I_{2n} \mod N\}, \quad  K(N)= \prod_{p<\inf} K_p(N) .
\end{equation}
One gets a congruence subgroup $\Gamma(N)=G(\Q)\cap G(\R)K(N)$.

Let $\mathfrak{H}_n:=\{Z\in M_n(\C) \mid Z={}^t\!Z, \; \mathrm{Im}(Z)>0  \}$.
We write $S_\uk(\Gamma(N))$ for the space of Siegel cusp forms of weight $\uk$ for $\Gamma(N)$, i.e., $S_\uk(\Gamma(N))$ consists of $V_\uk$-valued smooth functions $F$ on $G(\A)$ satisfying the following conditions:
\[
\begin{array}{ll}
\text{(i)}& F( \gamma gk_\inf k_f)=\rho_\uk(k_\inf)^{-1}F(g), \; g\in G(\A) , \;  \gamma\in G(\Q), \; k_\inf \in K_\inf, \; k_f\in K(N)    \\
\text{(ii)}& \text{$ \rho_\uk(g_\inf,iI_n) F|_{G(\R)}(g_\inf)$ is holomorphic for $g_\inf\cdot iI_n \in \mathfrak{H}_n$,} \\
\text{(iii)}& \max_{g\in G(\A)}\left|F(g)\right|\ll 1,
\end{array} 
\]
where $\rho_\uk$ denotes the finite dimensional irreducible polynomial representation of $\U(n)$ corresponding to $\uk$ together with the representation space $V_\uk$, and we set $\rho_\uk(g,iI_n)=\rho_\uk(iC+D)$ for 
$g=\left(\begin{smallmatrix}A&B \\ C&D\end{smallmatrix}\right)\in G(\R)$.

Let $\underline{m}=(m_1,...,m_n)$, $m_1| m_2|\cdots |m_n$, and $D_{\underline{m}}=\diag(m_1,...,m_n)$. 
Let $T(D_{\underline{m}})$ be the Hecke operator defined by the double coset $\Gamma(N)\begin{pmatrix} D_{\underline{m}}&0\\0&D_{\underline{m}}^{-1}\end{pmatrix}\Gamma(N)$. 
In particular, for each prime $p$, let
$D_{p,\underline{a}}=\diag(p^{a_1},...,p^{a_n})$, with $\underline{a}=(a_1,...,a_n), 0\leq a_1\leq\cdots\leq a_n$.

Let $F$ be a Hecke eigenform in $S_{\uk}(\Gamma(N))$ with respect to the Hecke operator $T(D_{p,\underline{a}})$ 
for all $p\nmid N$. (See \cite[Section 2.2]{KWY} for Hecke eigenforms  in the case of 
$n=2$. One can generalize the contents there to $n\geq 3$.)
Then $F$ gives rise to an adelic automorphic form $\phi_F$ on $\Sp(2n,\Bbb Q)\backslash\Sp(2n,\Bbb A)$, and $\phi_F$ gives rise to a cuspidal representation $\pi_F$ which is a direct sum $\pi_F=\pi_1\oplus\cdots\oplus \pi_r$, where $\pi_i$'s are irreducible cuspidal representations of $\Sp(2n)$. Since $F$ is an eigenform, $\pi_i$'s are all near-equivalent to each other.
Since we do not have the strong multiplicity one theorem for $\Sp(2n)$, we cannot conclude that $\pi_F$ is irreducible. However, the strong multiplicity one theorem for $\GL_n$ implies that there exists a global $A$-parameter $\psi\in\Psi(G)$ such that $\pi_i\in\Pi_\psi$ for all $i$ \cite[p. 3088]{Schm}. (See Section \ref{Arthur} for the definition of the global $A$-packet.)

On the other hand, given a cuspidal representation $\pi$ of $\Sp(2n)$ with a $K(N)$-fixed vector and whose infinity component is holomorphic discrete series of lowest weight $\uk$, there exists a holomorphic Siegel cusp form $F$ of weight $\uk$ with respect to $\Gamma(N)$ such that $\pi_F=\pi$. (See \cite[p. 2409]{Schm1} for $n=2$. One can generalize the contents there to $n\geq 3$.)

So we define $HE_\uk(N)$ to be a basis of $K(N)$-fixed vectors in the set of cuspidal representations of $\Sp(2n,\Bbb A)$ whose infinity component is holomorphic discrete series of lowest weight $\uk$, and identify it with a basis consisting of all 
Hecke eigenforms outside $N$. In particular, each $F\in HE_\uk(N)$ gives rise to an irreducible cuspidal representation $\pi_F$ of $\Sp(2n)$.
Let $\mathcal F_\uk(N)$ be the set of all isomorphism classes of cuspidal representations of $\Sp(2n)$ such that $\pi^{K(N)}\ne 0$ and $\pi_\infty\simeq \sigma_\uk$.
Consider the map $\Lambda: HE_\uk(N)\longrightarrow \mathcal F_\uk(N)$, given by $F\longmapsto \pi_F$. It is clearly surjective.   
For each $\pi=\pi_\infty\otimes \otimes_p' \pi_p\in \mathcal F_\uk(N)$, set $\pi_f=\otimes_p' \pi_p$. Then we get $|\Lambda^{-1}(\pi)|=\dim \pi_f^{K(N)}$, where $\pi_f^{K(N)}=\{ \phi\in \pi_f \mid \pi_f(k)\phi=\phi \,$ for all $k\in K(N)\}$. 

\section{Asymptotics of Hecke eigenvalues}

For each function $h\in C_c^\inf(K(N)\bsl G(\A_f)/K(N))$, an adelic Hecke operator $T_h$ on $S_\uk(\Gamma(N))$ is defined by
\[
(T_h F)(g)=\int_{G(\A_f)} F(g x) h(x) \, \d x, \qquad F\in S_\uk(\Gamma(N)).
\]
See \cite[p.15--16]{KWY} for the relationship between the classical Hecke operators and adelic Hecke operators for $n=2$. One can generalize the contents there to $n\geq 3$ easily. 
Let $f_\uk$ denote a pseudo-coefficient of $\sigma_\uk$ with ${\rm tr} \sigma_\uk(f_\uk)=1$ (cf. \cite{CD}). 
\begin{lem}\label{lem:20210602sp}
Suppose $k_n>n+1$ and $h\in C_c^\inf(K(N)\bsl G(\A_f)/K(N))$.
The spectral side $I_\mathrm{spec}(f_\uk h)$ of the invariant trace formula is given by 
\[
I_\mathrm{spec}(f_\uk h)=\sum_{\pi=\sigma_\uk\otimes\pi_f,\; \mathrm{auto. \, rep. \, of} \, G(\A)}m_\pi\,  \Tr(\pi_f(h))= \Tr\left( T_h|_{S_\uk(\Gamma(N))}\right),
\]
where $m_\pi$ means the multiplicity of $\pi$ in the discrete spectrum of $L^2(G(\Q)\bsl G(\A))$. 
\end{lem}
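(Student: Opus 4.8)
The plan is to run Arthur's invariant trace formula for the test function $f_\uk h$ and identify its spectral side with the contribution of the discrete spectrum alone, then recognize that contribution as the trace of $T_h$ on $S_\uk(\Gamma(N))$. First I would recall that, in general, $I_{\mathrm{spec}}(f_\uk h)$ is a sum over Levi subgroups $M\in\cL$ of (suitably normalized) integrals against traces of induced representations $\mathcal{I}_{P}(\sigma,f_\uk h)$ weighted by Arthur's distributions; the $M=G$ term is the discrete part $\sum_\pi m_\pi^{\mathrm{disc}}\,\Tr(\pi(f_\uk h))$, and the $M\subsetneq G$ terms involve properly induced representations from proper parabolics. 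The key input is that $f_\uk$ is a pseudo-coefficient of the holomorphic discrete series $\sigma_\uk$: by the theory of pseudo-coefficients (Clozel--Delorme, as cited via \cite{CD}, together with \cite{Arthur} and \cite{Hiraga} as invoked in the introduction under the hypothesis $k_n>n+1$), $\Tr(\pi_\infty(f_\uk))$ vanishes for every irreducible unitary representation $\pi_\infty$ of $G(\R)$ that is not in the $L$-packet of $\sigma_\uk$, and in particular vanishes on all representations that are properly parabolically induced at the archimedean place. Since $k_n>n+1$ makes $\sigma_\uk$ sufficiently regular, its $L$-packet consists of the (essentially) discrete series only, so every term in $I_{\mathrm{spec}}(f_\uk h)$ attached to a proper Levi $M$, which forces $\pi_\infty$ to be a constituent of a representation induced from a proper parabolic of $G(\R)$, is killed by $\Tr(\pi_\infty(f_\uk))=0$. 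This collapses the spectral side to the discrete part.

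Next I would analyze the discrete part $\sum_{\pi}m_\pi^{\mathrm{disc}}\,\Tr(\pi_\infty(f_\uk))\,\Tr(\pi_f(h))$. Again using $\Tr(\pi_\infty(f_\uk))=1$ exactly when $\pi_\infty\simeq\sigma_\uk$ (the stated normalization) and $0$ otherwise for tempered $\pi_\infty$, and using the regularity to exclude non-tempered residual contributions with the right infinitesimal character (so that the full discrete spectrum with $\pi_\infty$ of this infinitesimal character is cuspidal — here one invokes Wallach's or Arthur's vanishing of residual forms in the sufficiently regular range), the sum reduces to $\sum_{\pi=\sigma_\uk\otimes\pi_f}m_\pi\,\Tr(\pi_f(h))$, where $m_\pi=m_\pi^{\mathrm{disc}}$ is the multiplicity in $L^2_{\mathrm{disc}}$, which equals $m_{\mathrm{cusp}}(\pi)$ in this range. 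This is the middle expression in the statement.

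Finally I would match this to $\Tr\big(T_h\mid S_\uk(\Gamma(N))\big)$. Here the point is the standard dictionary between holomorphic Siegel cusp forms of weight $\uk$ and level $\Gamma(N)$ and automorphic representations $\pi=\sigma_\uk\otimes\pi_f$ with $\pi_f^{K(N)}\neq 0$: each such $\pi$ contributes a subspace of $S_\uk(\Gamma(N))$ of dimension $m_\pi\cdot\dim\pi_f^{K(N)}$, and on that subspace the adelic Hecke operator $T_h$ acts (after identifying the minimal $K_\infty$-type with $\rho_\uk$ and using that $f_\uk$ projects onto $\sigma_\uk$) exactly as the operator $\pi_f(h)$ on $\pi_f^{K(N)}\otimes(\text{multiplicity space})$. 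Summing the traces over $\pi$ and using that $h$ is bi-$K(N)$-invariant so $\Tr(\pi_f(h))=\Tr(\pi_f(h)\mid\pi_f^{K(N)})$, the Euler characteristic/multiplicity bookkeeping gives exactly $\Tr(T_h\mid S_\uk(\Gamma(N)))$; the required compatibility of the classical and adelic Hecke operators is the generalization to $n\ge 3$ of the $n=2$ computation recalled from \cite{KWY}.

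\medskip

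The main obstacle is the first step: rigorously justifying that \emph{all} the proper-Levi terms in Arthur's spectral expansion vanish. One must be careful that the vanishing of $\Tr(\pi_\infty(f_\uk))$ on proper parabolic inductions, combined with the continuous-spectrum weight factors, really annihilates each $M\subsetneq G$ term — this uses that a pseudo-coefficient of a discrete series is cuspidal at infinity (its orbital integrals vanish on non-elliptic elements), hence its constant terms along proper parabolics vanish, so the archimedean factor of every properly-induced trace is zero; one should also confirm that the archimedean infinitesimal character forced by $\sigma_\uk$, being regular, excludes any non-tempered discrete-spectrum constituent, so no residual contribution survives. Once this is in place, the rest is the bookkeeping described above.
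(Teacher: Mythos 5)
Your proposal follows essentially the same route as the paper's proof, which is just a short citation: the first equality is the simplification of the spectral side proved in \cite{Arthur} for cuspidal archimedean test functions, refined by \cite{Hiraga} to isolate a single pseudo-coefficient rather than a stable sum over the $L$-packet, and the second equality is \cite{Wallach}. One imprecision worth flagging: the claim that $\Tr(\pi_\infty(f_\uk))$ vanishes for \emph{every} unitary $\pi_\infty$ outside the $L$-packet of $\sigma_\uk$ is false in general for non-tempered $\pi_\infty$; the correct and sufficient facts are the ones you state later (vanishing on properly parabolically induced representations because $f_\uk$ is cuspidal, and vanishing on tempered $\pi_\infty\not\simeq\sigma_\uk$), and ruling out non-tempered discrete-spectrum contributions with the right infinitesimal character is exactly where the regularity $k_n>n+1$ and the main theorem of \cite{Hiraga} enter, which is why the paper cites it.
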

\begin{proof}
The second equality follows from \cite{Wallach}. 
One can prove the first equality by using the arguments in \cite{Arthur} and the main result in \cite{Hiraga}.
\end{proof}

We choose two natural numbers $N_1$ and $N$, which are mutually coprime. 
Suppose that $N_1$ is square free. 
Set $S_1=\{  p\,:\,  p|N_1\}$.
We write $h_N$ for the characteristic function of $\prod_{p\notin S_1\sqcup\{\inf\}} K_p(N)$.
For each automorphic representation $\pi=\pi_\infty\otimes\otimes_p' \pi_p$, we set $\pi_{S_1}=\otimes_{p\in S_1} \pi_p$.
\begin{lem}\label{lem:210513l1}
Take a test function $h$ on $G(\A_f)$ as
\begin{equation}\label{eq:testft}
h=\vol(K(N))^{-1}\times  h_1 \otimes h_N, \quad h_1\in \otimes_{p\in S_1}\cH^\ur(G(\Q_p)).
\end{equation}
Then
\[
I_\mathrm{spec}(f_\uk h)=\sum_{\pi=\sigma_\uk\otimes\pi_f,\; \mathrm{auto. \, rep. \, of} \, G(\A)}m_\pi\,  \dim\pi_f^{K(N)} \, \Tr(\pi_{S_1}(h_1))= \Tr\left( T_h|_{S_\uk(\Gamma(N))}\right).
\]
\end{lem}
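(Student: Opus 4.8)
The plan is to reduce Lemma~\ref{lem:210513l1} to the previous lemma (Lemma~\ref{lem:20210602sp}) by decomposing the test function $h$ according to the places in $S_1$, the places dividing $N$, and the remaining places. First I would observe that $h$ as in \eqref{eq:testft} lies in $C_c^\inf(K(N)\bsl G(\A_f)/K(N))$: indeed $h_N$ is bi-$\prod_{p\notin S_1\sqcup\{\inf\}}K_p(N)$-invariant by definition, and $h_1$ can be taken bi-$\prod_{p\in S_1}K_p(N)=\prod_{p\in S_1}K_p$-invariant since $(N_1,N)=1$ forces $K_p(N)=K_p$ for $p\in S_1$; the normalizing constant $\vol(K(N))^{-1}$ is harmless. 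Hence Lemma~\ref{lem:20210602sp} applies and gives
\[
I_\mathrm{spec}(f_\uk h)=\sum_{\pi=\sigma_\uk\otimes\pi_f}m_\pi\,\Tr(\pi_f(h))=\Tr\!\left(T_h|_{S_\uk(\Gamma(N))}\right).
\]
So the only thing left is to identify $\Tr(\pi_f(h))$ with $\dim\pi_f^{K(N)}\cdot\Tr(\pi_{S_1}(h_1))$ for each relevant $\pi$.

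For the second step I would factor $\pi_f=\pi_{S_1}\otimes\pi^{S_1}_f$, where $\pi^{S_1}_f=\otimes_{p\notin S_1\sqcup\{\inf\}}\pi_p$, and correspondingly $h=\vol(K(N))^{-1}\,h_1\otimes h_N$ with $h_1$ acting on $\pi_{S_1}$ and $h_N$ on $\pi^{S_1}_f$. Since the operator $\pi_f(h)$ is the (outer) tensor product $\pi_{S_1}(h_1)\otimes\big(\vol(K(N))^{-1}\pi^{S_1}_f(h_N)\big)$, its trace factors as
\[
\Tr(\pi_f(h))=\Tr(\pi_{S_1}(h_1))\cdot\Tr\!\left(\vol(K(N))^{-1}\pi^{S_1}_f(h_N)\right),
\]
provided both operators are trace class, which holds because $h$ is compactly supported and $\pi$ is admissible. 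It then remains to compute the second factor. Here $h_N$ is the characteristic function of $\prod_{p\notin S_1\sqcup\{\inf\}}K_p(N)$, an open compact subgroup of $G(\A_f^{S_1})$ whose volume is exactly $\vol(K(N))$ (the $S_1$-components contribute $\vol(K_p)=1$ under the chosen normalization). Therefore $\vol(K(N))^{-1}\pi^{S_1}_f(h_N)$ is the orthogonal projection of $\pi^{S_1}_f$ onto its subspace of $\prod_{p\notin S_1\sqcup\{\inf\}}K_p(N)$-fixed vectors, whose trace is the dimension of that space. Finally, since for $p\in S_1$ one has $K_p(N)=K_p$ and a $K_p$-fixed vector is the same as no constraint is too strong—more precisely, $\pi_f^{K(N)}$ is spanned by tensors $v_{S_1}\otimes v^{S_1}$ with $v^{S_1}$ fixed by $\prod_{p\notin S_1\sqcup\{\inf\}}K_p(N)$ and $v_{S_1}$ arbitrary—one must be slightly careful: what the projection computes is $\dim\big(\pi^{S_1}_f\big)^{\prod K_p(N)}$, not $\dim\pi_f^{K(N)}$. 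I would reconcile this by noting that in \eqref{eq:testft} $h_1$ is implicitly taken in $C_c^\inf(K(N)\bsl G(\A_f)/K(N))$, i.e.\ bi-$K_{S_1}:=\prod_{p\in S_1}K_p$-invariant (as it must be for $T_h$ to act on $S_\uk(\Gamma(N))$), so $\pi_{S_1}(h_1)$ already factors through the projection onto $\pi_{S_1}^{K_{S_1}}$, and $\dim\pi_f^{K(N)}=\dim\pi_{S_1}^{K_{S_1}}\cdot\dim\big(\pi^{S_1}_f\big)^{\prod K_p(N)}$; absorbing $\dim\pi_{S_1}^{K_{S_1}}$ into the trace of $\pi_{S_1}(h_1)$ is automatic, or—more cleanly and matching the statement—one simply does not impose $K_{S_1}$-invariance on $h_1$, drops the $S_1$-factor from $K(N)$ entirely (consistent with $\Gamma(N)$ only seeing primes dividing $N$), and then $\dim\pi_f^{K(N)}=\dim\big(\pi^{S_1}_f\big)^{\prod K_p(N)}$ directly.

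Putting the pieces together yields
\[
I_\mathrm{spec}(f_\uk h)=\sum_{\pi=\sigma_\uk\otimes\pi_f}m_\pi\,\dim\pi_f^{K(N)}\,\Tr(\pi_{S_1}(h_1))=\Tr\!\left(T_h|_{S_\uk(\Gamma(N))}\right),
\]
which is the assertion. I expect the main (and really only) obstacle to be bookkeeping rather than mathematical depth: namely pinning down precisely which compact-open subgroup the various factors are invariant under, so that the constant $\vol(K(N))^{-1}$ exactly turns $\pi^{S_1}_f(h_N)$ into a genuine projection and the dimension count $\dim\pi_f^{K(N)}$ comes out with no spurious local factor at the primes in $S_1$. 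Everything else—admissibility giving trace-class operators, multiplicativity of traces under tensor products, and the input $I_\mathrm{spec}(f_\uk h)=\Tr(T_h|_{S_\uk(\Gamma(N))})$ from Lemma~\ref{lem:20210602sp}—is standard or already established.
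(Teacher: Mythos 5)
Your approach is the paper's approach: the authors' own proof is one line (``This lemma immediately follows from Lemma~\ref{lem:20210602sp}''), and you have correctly expanded that by (a) checking $h$ is bi-$K(N)$-invariant so Lemma~\ref{lem:20210602sp} applies, (b) factoring $\pi_f(h)=\pi_{S_1}(h_1)\otimes\vol(K(N))^{-1}\pi_f^{S_1}(h_N)$, and (c) recognizing $\vol(K(N))^{-1}\pi_f^{S_1}(h_N)$ as the projector onto $(\pi_f^{S_1})^{\prod_{p\notin S_1\cup\{\infty\}}K_p(N)}$ (your volume bookkeeping is right: $K_p(N)=K_p$ for $p\in S_1$ since $(N_1,N)=1$, so $\vol(K(N))$ is exactly the volume of the support of $h_N$).

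The one place where you wobble is the reconciliation of $\dim\big(\pi_f^{S_1}\big)^{\prod K_p(N)}$ with $\dim\pi_f^{K(N)}$, and the fix is cleaner than either of the two options you float. You correctly note that for $h$ to be bi-$K(N)$-invariant $h_1$ must be bi-$K_{S_1}$-invariant, and that $\dim\pi_f^{K(N)}=\dim\pi_{S_1}^{K_{S_1}}\cdot\dim\big(\pi_f^{S_1}\big)^{\prod K_p(N)}$. The decisive fact you don't state is that $\pi_{S_1}$ is irreducible admissible, so by commutativity of the spherical Hecke algebra $\dim\pi_{S_1}^{K_{S_1}}\in\{0,1\}$. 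If it equals $1$ the two dimensions coincide; if it equals $0$ then both $\dim\pi_f^{K(N)}=0$ and $\Tr(\pi_{S_1}(h_1))=0$ (since $h_1=e_{K_{S_1}}*h_1*e_{K_{S_1}}$ forces $\pi_{S_1}(h_1)$ to factor through the zero projection), so both sides of the asserted identity vanish. Saying ``absorbing $\dim\pi_{S_1}^{K_{S_1}}$ into $\Tr(\pi_{S_1}(h_1))$ is automatic'' is not quite right---there is no absorption, just a dichotomy that makes the extra factor $0$ or $1$---and the alternative of ``dropping the $S_1$-factor from $K(N)$'' is not what the paper's notation does (Section~2 has $K(N)=\prod_{p<\infty}K_p(N)$, which includes $K_p=K_p(N)$ at $p\in S_1$). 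Replace that last sentence of your argument with the $\{0,1\}$ dichotomy and the proof is complete and clean.

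Incidentally, this analysis shows the lemma is not literally true for arbitrary $h_1\in C_c^\infty(G(\Q_{S_1}))$: a ramified $\pi_{S_1}$ with $\Tr(\pi_{S_1}(h_1))\neq 0$ and nonzero $(\pi_f^{S_1})^{\prod K_p(N)}$ would contribute to $\Tr(\pi_f(h))$ but not to $\dim\pi_f^{K(N)}\Tr(\pi_{S_1}(h_1))$. The statement silently presupposes $h_1$ is $K_{S_1}$-bi-invariant, which is also needed for $T_h$ to preserve $S_\uk(\Gamma(N))$ and is always the case in the paper's applications.
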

\begin{proof}
This lemma immediately follows from Lemma \ref{lem:20210602sp}.
\end{proof}

Let $V_r$ denote the vector space of symmetric matrices of degree $r$, and define a rational representation $\rho$ of the group $\GL_1\times \GL_r$ on $V_r$ by $x\cdot \rho(a,m)=a{}^t\!mxm$ $(x\in V_r$, $(a,m)\in \GL_1\times \GL_r)$.
The kernel of $\rho$ is given by $\mathrm{Ker}\rho=\{  (a^{-2} , aI_r) \mid a \in \GL_1 \}$, and we set $H_r=\mathrm{Ker}\rho\bsl (\GL_1 \times \GL_r)$.
Then, the pair $(H_r,V_r)$ is a prehomogeneous vector space over $\Q$.
For $1\leq r\leq n$ and $f\in C_c^\inf(G(\A))$ $\left(\text{resp.}\, f\in C_c^\inf(G(\A_f))\right)$, we define a function $\Phi_{f,r}\in C_c^\inf(V_r(\A))$ $\left(\text{resp.}\,  \Phi_{f,r}\in C_c^\inf(V_r(\A_f))\right)$ as
\[
\Phi_{f,r}(x)=\int_K f(k^{-1}\begin{pmatrix} I_n & * \\ O_n & I_n \end{pmatrix}k) \, \d k  \quad \left(\text{resp.} \;\; \int_{K_f} \right),\quad *=\begin{pmatrix} x&0 \\ 0 &0 \end{pmatrix}\in V_n.
\]

Let $\tf_\uk$ denote the spherical trace function of $\sigma_\uk$ with respect to $\rho_\uk$ on $G(\R)$ (cf. \cite[\S 5.3]{Wakatsuki}). 
Notice that $\tf_\uk$ is a matrix coefficient of $\sigma_\uk$, and so it is not compactly supported.
Take a test function $h\in C_c^\inf(G(\A_f))$ and set $\tf=\tf_\uk h$.
Let $\chi$ be a real character on $\R_{>0}\Q^\times\bsl \A^\times$.
Define a zeta integral $Z_r(\Phi_{\tf,r},s,\chi)$ by
\[
Z_r(\Phi_{\tf,r},s,\chi)=\int_{H_r(\Q)\bsl H_r(\A)}|a^r\det(m)^2|^{s} \chi(a) \, \sum_{x\in V_r^0(\Q)} \Phi_{\tf,r}(x\cdot g) \, \d g \quad (g=\rho \,(a,m)),
\]
where $V_r^0=\{x\in V_r\mid \det(x)\neq 0\}$ and $\d g$ is a Haar measure on $H_r(\A)$.
The zeta integral $Z_r(\Phi_{\tf,r},s,\chi)$ is absolutely convergent for the range 
\begin{equation}\label{eq:range0605}
k_n>2n, \quad   \mathrm{Re}(s)>\frac{r-1}{2}, \quad \begin{cases} \Re(s)<\frac{k_n}{2} & \text{if $r=2$},  \\ \mathrm{Re}(s)<k_n-\frac{r-1}{2} & \text{otherwise}, \end{cases}
\end{equation}
see \cite[Proposition 5.15]{Wakatsuki}, and $Z(\Phi_{\tf,r},s,\chi)$ is meromorphically continued to the whole $s$-plane (cf. \cite{Shintani,Wakatsuki,Yukie}).  
The following lemma associates $Z(\Phi_{\tf,r},s,\chi)$ with the unipotent contribution $I_\mathrm{unip}(f)=I_1(f)+I_2(f)+I_3(f)$ of the invariant trace formula.
\begin{lem}\label{lem:210513l2}
Let $S_0$ be a finite set of finite places of $\Q$.
Take a test function $h_{S_0}\in C_c^\inf (G(\Q_{S_0}))$, and let $h^{S_0}$ denote the characteristic function of $\prod_{p\notin S_0\sqcup\{\inf\}}K_p$. 
Define a test function $\tf$ as $\tf=\tf_\uk h_{S_0}h^{S_0}$.
If $k_n$ is sufficiently large $(k_n \gg 2n)$, then we have
\[
I_\mathrm{unip}(f_\uk h_{S_0}h^{S_0})=\vol_G\, h_{S_0}(1)\, d_\uk  + \frac{1}{2}\sum_{r=1}^n \sum_{\chi\in \mathscr{X}(S_0)} Z_r(\Phi_{\tf,r},n-\tfrac{r-1}{2},\chi),
\]
where $\vol_G=\vol(G(\Q)\bsl G(\A))$, $d_\uk$ denotes the formal degree of $\sigma_\uk$, and $\mathscr{X}(S_0)$ denotes the set consisting of real characters $\chi=\otimes_v \chi_v$ on $\R_{>0}\Q^\times\bsl \A^\times$ such that $\chi_v$ is unramified for any $v\notin S_0\sqcup\{\inf\}$.
\end{lem}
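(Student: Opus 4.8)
The plan is to unwind the unipotent part $I_{\mathrm{unip}}(f_\uk h_{S_0}h^{S_0})$ of Arthur's invariant trace formula and match it term by term with the prehomogeneous zeta integrals $Z_r(\Phi_{\tf,r},s,\chi)$. First I would recall that $I_{\mathrm{unip}}(f) = \sum_{M\in\cL} (-1)^{\dim(A_M/A_G)} \frac{|W_0^M|}{|W_0^G|}\sum_{\gamma} a^M(\tilde S,\gamma) I_M^G(\gamma, f_\xi) J_M^M(\gamma, h_P)$, where the inner sum runs over $(M,\tilde S)$-equivalence classes of unipotent $\gamma$ in $M(\Q)$; because $f_\xi$ (here $f_\uk$, or rather the associated matrix coefficient $\tf_\uk$) is essentially a matrix coefficient of the holomorphic discrete series, the weighted orbital integrals $I_M^G(\gamma,\tf_\uk)$ at infinity admit explicit evaluation when $k_n$ is large — this is exactly where the hypothesis ``$k_n$ sufficiently large'' enters, ensuring convergence and killing the contributions of the lower-dimensional weight factors $J_M^M$ that would otherwise obstruct a clean closed form. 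The term $M=G$, $\gamma=1$ produces $\vol_G\, h_{S_0}(1)\, d_\uk$: indeed $I_G^G(1,\tf_\uk) = \tf_\uk(1)$ times the formal degree normalization, and $J_G^G(1,h_{S_0}h^{S_0}) = h_{S_0}(1)$ since $h^{S_0}$ is the characteristic function of a maximal compact with volume one.

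The heart of the argument is to organize the remaining unipotent classes. For $G=\Sp(2n)$ the unipotent conjugacy classes relevant to the Siegel parabolic are indexed, via the Jordan decomposition and the theory of the Siegel-parabolic unipotent radical, by rank $r$ with $1\le r\le n$ and by a further discrete invariant — the $\GL_r$-equivalence class of a nondegenerate symmetric $r\times r$ matrix, which is precisely an orbit in $V_r^0(\Q)$ under $H_r(\Q)$. So I would set up the bijection between (a) the data $(M,\gamma)$ with $\gamma$ unipotent nontrivial contributing at adelic/places level $\tilde S$, grouped by the Siegel-type Levi, and (b) the lattice points $\sum_{x\in V_r^0(\Q)}\Phi_{\tf,r}(x\cdot g)$ appearing inside the zeta integral. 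The global coefficients $a^M(\tilde S,\gamma)$, which are in general mysterious, here become tractable: for these unipotent orbits they are (up to elementary volume constants and a sign) given by residues or special values of Shintani-type zeta functions, and assembling the sum over all orbits with their $a^M$-weights reconstructs exactly the Tamagawa-measure integral defining $Z_r(\Phi_{\tf,r},s,\chi)$ evaluated at the point $s = n-\tfrac{r-1}{2}$. The sum over the real characters $\chi\in\mathscr X(S_0)$ arises because the global coefficient $a^M(\tilde S,\gamma)$ for a non-split form of the symmetric-space orbit involves a sum over characters of the component group — equivalently, the ``twisted'' part of the Shintani zeta function — and the factor $\tfrac12$ is the usual normalization from $\mathrm{Ker}\,\rho$ having order-two effect on $H_r$ versus $\GL_1\times\GL_r$. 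I would check that $\chi\notin\mathscr X(S_0)$ forces $\Phi_{\tf,r}$ to be orthogonal to the $\chi$-isotypic part (because $h^{S_0}$ is $K_p$-bi-invariant, so only unramified $\chi_p$ survive at $p\notin S_0\cup\{\infty\}$), giving the vanishing statement.

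The main obstacle I anticipate is the precise identification of Arthur's global coefficients $a^M(\tilde S,\gamma)$ with the Shintani special values, including getting all normalizing constants (Haar measures on $H_r(\A)$, the formal degree $d_\uk$, Tamagawa numbers, the factor $\tfrac12$, and the signs $(-1)^{\dim(A_M/A_G)}$) exactly right, and verifying that the point $s=n-\tfrac{r-1}{2}$ lies in the convergence range \eqref{eq:range0605} — for $r=2$ one needs $n-\tfrac12 < \tfrac{k_n}{2}$, i.e. $k_n > 2n-1$, while for general $r$ one needs $n-\tfrac{r-1}{2} < k_n - \tfrac{r-1}{2}$, i.e. $k_n>n$, both comfortably satisfied once $k_n$ is large, but the $r=2$ case is the binding constraint and must be stated carefully. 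A secondary technical point is the interchange of the sum over unipotent orbits with the adelic integral, which is justified by absolute convergence of $Z_r$ in the stated range; this is where I would lean on the convergence results of \cite{Wakatsuki} and \cite{Shintani, Yukie} rather than reprove them. Once the dictionary is fixed, the remaining steps are bookkeeping: split $I_{\mathrm{unip}} = I_1+I_2+I_3$ as in the introduction, peel off $I_1(f_\uk h_{S_0}h^{S_0}) = \vol_G\,h_{S_0}(1)\,d_\uk$, and recognize $I_2+I_3$ as $\tfrac12\sum_{r=1}^n\sum_{\chi\in\mathscr X(S_0)} Z_r(\Phi_{\tf,r}, n-\tfrac{r-1}{2},\chi)$.
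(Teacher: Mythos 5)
Your high-level plan — unwind Arthur's fine expansion over $(M,\gamma)$, organize the nontrivial unipotent classes by the rank-$r$ symmetric-matrix orbits, and match global coefficients against Shintani-type data — is a plausible description of what is happening at a very coarse level, and you do correctly identify the $I_1$ term and the mechanism for vanishing when $\chi\notin\mathscr X(S_0)$ (bi-$K_p$-invariance of $\Phi_{\tf,r}$ at $p\notin S_0$, cf.\ \eqref{eq:210513b}). But there is a genuine gap in the middle that your proposal papers over, and the one explanation you do offer there is wrong.

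The paper does not derive the sum $\frac12\sum_{\chi}Z_r(\Phi_{\tf,r},\cdot,\chi)$ from the component-group structure inside the global coefficients $a^M(\tilde S,\gamma)$, as you suggest. What actually happens is a two-step reduction. First, one cites \cite[Prop.\ 3.8, Prop.\ 3.11, Lemmas 5.10, 5.16]{Wakatsuki} to get the identity
\[
I_\mathrm{unip}(f_\uk h_{S_0}h^{S_0}) = \vol_G\,h_{S_0}(1)\,d_\uk + \sum_{r=1}^n \tilde Z_r(\Phi_{\tf,r},\,n-\tfrac{r-1}{2}),
\]
where $\tilde Z_r$ is the auxiliary zeta integral over $\GL_r(\Q)\backslash\GL_r(\A)$ (not over $H_r$, and with no character). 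Your proposal never introduces $\tilde Z_r$, and without it you have no clean target for the matching of Arthur's unipotent expansion against a prehomogeneous object; you would essentially have to reprove Wakatsuki's dimension-formula machinery from scratch, which is not realistic inside this lemma. Second, the passage to the $H_r$-zeta functions with real characters is the Fourier/Mellin unfolding
\[
\tilde Z_r(\Phi_{\tf,r},s) = \tfrac12\sum_{\chi}Z_r(\Phi_{\tf,r},s,\chi),
\]
taken from \cite[(4.9)]{HW}, where $\chi$ runs over real characters of $\R_{>0}\Q^\times\backslash\A^\times$. The factor $\tfrac12$ and the appearance of $\chi$ come from separating the extra $\GL_1$ direction when passing from the (non-faithful) $\GL_r$-action on $V_r$ to the faithful $H_r$-action; they are not (as you write) ``the usual normalization from $\mathrm{Ker}\,\rho$ having an order-two effect,'' since $\mathrm{Ker}\,\rho\cong\GL_1$ is not finite. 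Finally, the hypothesis ``$k_n$ sufficiently large'' is needed exactly for the absolute convergence at $s=n-\tfrac{r-1}{2}$ (your range check is fine, and in fact the lemma's statement records that this point lies in \eqref{eq:range0605}), and for the replacement of the pseudo-coefficient $f_\uk$ by the non-compactly-supported matrix coefficient $\tf_\uk$ in the zeta side — a subtlety the proposal does not address.
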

\begin{remark}
Note that the point $s=n-\frac{r-1}{2}$ $(1\leq r\leq n)$ is contained in the range \eqref{eq:range0605}, and we have $Z_r(\Phi_{\tf,r},s,\chi)\equiv 0$ for any real character $\chi\notin \mathscr{X}(S_0)$.
\end{remark}
\begin{proof}
To study $I_\mathrm{unip}(f_\uk h_{S_0}h^{S_0})$, we need an additional zeta integral $\tilde{Z}_r(\Phi_{\tf,r},s)$ defined by
\[
\tilde{Z}_r(\Phi_{\tf,r},s)=\int_{\GL_r(\Q)\bsl \GL_r(\A)}|\det(m)|^{2s} \sum_{x\in V_r^0(\Q)} \Phi_{\tf,r}( {}^t\!m x m) \, \d m.
\]
The zeta integral $\tilde{Z}_r(\Phi_{\tf,r},s)$ is absolutely convergent for the range \eqref{eq:range0605}, and $\tilde{Z}(\Phi_{\tf,r},s)$ is meromorphically continued to the whole $s$-plane, see \cite{Shintani,Wakatsuki,Yukie}.
Applying \cite[Proposition 3.8, Proposition 3.11, Lemmas 5.10 and 5.16]{Wakatsuki} to $I_\mathrm{unip}(f)$, we obtain
\begin{equation}\label{eq:1030}
I_\mathrm{unip}(f_\uk h_{S_0}h^{S_0})=\vol_G\, h_{S_0}(1)\, d_\uk  + \sum_{r=1}^n  \tilde{Z}_r(\Phi_{\tf,r},n-\tfrac{r-1}{2})
\end{equation}
for sufficiently large $k_n\gg 2n$.
Notice that $f_\uk$ is changed to $\tf_\uk$ in the right-hand side of \eqref{eq:1030}, and this change is essentially required for the proof of \eqref{eq:1030}. 

By the same argument as in \cite[(4.9)]{HW}, we have
\[
\tilde{Z}_r(\Phi_{\tf,r},s)=\frac{1}{2}\sum_\chi Z_r(\Phi_{\tf,r},s,\chi)
\]
where $\chi$ runs over all real characters on $\R_{>0}\Q^\times\bsl\A^\times$. 
Suppose that $\chi=\otimes_v\chi_v\notin \mathscr{X}(S_0)$. 
Then, we can take a prime $p\notin S_0$ such that $\chi_p$ is ramified and
\begin{equation}\label{eq:210513b}
\Phi_{\tf,r}(a_p x)=\Phi_{\tf,r}(x) \quad (\forall a_p\in\Z_p^\times), 
\end{equation}
holds.
Hence, we get $Z_r(\Phi_{\tf,r},s,\chi)\equiv 0$, and the proof is completed.
\end{proof}
\begin{remark}
The rational representation $\rho$ of $H_r$ on $V_r$ is faithful, but the representation $x\mapsto {}^t\!m x m$ of $\GL_r$ on $V_r$ is not. 
Hence, $Z_r(\Phi_{\tf,r},s,\chi)$ is suitable for Saito's explicit formula \cite{Saito2}, which we use in the proof of Theorem \ref{thm:asy}, but $\tilde{Z}_r(\Phi_{\tf,r},s)$ is not. 
This fact is also important for the study of global coefficients in the geometric side (cf. \cite{HW}). 
\end{remark}

Let $\psi$ be a non-trivial additive character on $\Q\bsl\A$, and a bilinear form $\langle \; , \; \rangle$ on $V_r(\A)$ is defined by $\langle x , y \rangle:=\Tr(x y)$. 
Let $\d x$ denote the self-dual measure on $V_r(\A)$ for $\psi(\langle \; , \; \rangle)$.
Then, a Fourier transform of $\Phi\in C^\inf(V_r(\A))$ is defined by
\[
\widehat{\Phi}(y)=\int_{V_r(\A)} \, \Phi(x)\, \psi(\langle x , y \rangle)\, \d x \qquad (y\in V_r(\A)). 
\]
For each $\Phi_0\in C^\inf_0(V_r(\A_f))$, we define its Fourier transform $\widehat{\Phi_0}$ in the same manner.
The zeta function $Z_r(\Phi_{\tf,r},s,\trep)$ satisfies the functional equation \cite{Shintani,Yukie}
\begin{equation}\label{eq:functional}
Z_r(\Phi_{\tf,r},s,\trep)=Z_r(\widehat{\Phi_{\tf,r}},\tfrac{r+1}{2}-s,\trep),
\end{equation}
where $\trep$ denotes the trivial representation on $\R_{>0}\Q^\times\bsl\A^\times$. 

Take a test function $\Phi_0\in C^\inf_0(V_r(\A_f))$ such that $\Phi_0({}^t\!k xk)=\Phi_0(x)$ holds for any $k\in \prod_{p<\inf}H_r(\Z_p)$ and $x\in V_r(\A_f)$, where $H_r(\Z_p)$ is identified with the projection of $\GL_1(\Z_p)\times \GL_r(\Z_p)$ into $H_r(\A_f)$.
We write $L_0$ for the subset of $V_r(\Q)$ which consists of the positive definite symmetric matrices contained in the support of $\Phi_0$.
It follows from the condition of $\Phi_0$ that $L_0$ is invariant for $\Gamma=H_r(\Z)=H_r(\Q)\cap H_r(\widehat{\Z})$. 
Put $\zeta_r(\Phi_0,s)=1$ for $r=0$.
For $r>0$, define a Shintani zeta function $\zeta_r(\Phi_0,s)$ as
\[
\zeta_r(\Phi_0,s)=\sum_{x\in L_0/\Gamma} \frac{\Phi_0(x)}{\#(\Gamma_x)\, \det(x)^s} .
\]
where $\Gamma_x=\{\gamma\in \Gamma\mid x\cdot \gamma=x\}$.
The zeta function $\zeta_r(\Phi_0,s)$ absolutely converges for $\Re(s)>\frac{r+1}{2}$, and is meromorphically continued to the whole $s$-plane (cf. \cite{Shintani}). Furthermore, $\zeta_r(\Phi_0,s)$ is holomorphic except for possible simple poles at $s=1,\frac{3}{2},\dots \frac{r+1}{2}$.
\begin{lem}\label{lem:functzeta}
Let $1\leq r\leq n$, $k_n>2n$, $h\in C_c^\inf(G(\A_f))$ and take a test function $\tf$ as $\tf=\tf_\uk h$. 
Then, there exists a rational function $\mathbf{C}_{n,r}(x_1,\dots,x_n)$ over $\R$ such that
\[
Z_r(\Phi_{\tf,r},n-\tfrac{r-1}{2},\trep)= \mathbf{C}_{n,r}(\uk) \times \zeta_r(\widehat{\Phi_{h,r}},r-n).
\]
\end{lem}
\begin{proof}
This can be proved by the functional equation \eqref{eq:functional} and the same argument as in \cite[Proof of Lemma 5.16]{Wakatsuki}.
\end{proof}
Note that $\zeta_r(\widehat{\Phi_{h,r}},s)$ is holomorphic in $\{s\in\C \mid \Re(s)\leq 0\}$, and $\mathbf{C}_{n,r}(x_1,\dots,x_n)$ is explicitly expressed by the Gamma function and the partitions, see \cite[(5.17) and Lemma 5.16]{Wakatsuki}. 
We will use this lemma for the regularization of the range of $\uk$. 
The zeta integral $Z_r(\Phi_{\tf,r},n-\tfrac{r-1}{2},\trep)$ was defined only for $k_n>2n$, but the right-hand side of the equality in Lemma \ref{lem:functzeta} is available for any $\uk$.
In addition, this lemma is necessary to estimate the growth of $I_\mathrm{unip}(f)$ with respect to $S=S_1\sqcup\{\inf\}$.
We later define a Dirichlet series $D_{m,u_S}^S(s)$ just before Proposition \ref{prop:1}, and the series $D_{m,u_S}^S(s)$ appears in the explicit formula of $Z_r(\Phi,s,\trep)$ when $r$ is even.
For the case that $r$ is even and $3<r<n$, it seems difficult to estimate the growth of its contribution to $Z_r(\Phi_{\tf,r},n-\tfrac{r-1}{2},\trep)$, but we can avoid such difficulty by this lemma, since the special value $\zeta_r(\widehat{\Phi_{h,r}},r-n)$ excludes the part of $D_{m,u_S}^S(s)$.

\begin{thm}\label{thm:tr}
Suppose $k_n>n+1$. Let $h_1\in \cH^\ur(G(\Q_{S_1}))^\kappa=\otimes_{p\in S_1}\cH^\ur(G(\Q_p))^\kappa$ and let $h$ be a test function on $G(\A_f)$ given as \eqref{eq:testft}.
Then there exists a positive constant $c_0$ such that, if $N\geq c_0 N_1^{2n\kappa}$, 
\begin{equation}\label{eq:zeta}
 \Tr\left( T_h|_{S_\uk(\Gamma(N))}\right)=\vol_G\, \vol(K(N))^{-1} \, h_1(1)\, d_\uk  + \frac{1}{2}\sum_{r=1}^n \mathbf{C}_{n,r}(\uk) \, \zeta_r(\widehat{\Phi_{h,r}},r-n).
\end{equation}
\end{thm}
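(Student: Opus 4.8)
\textbf{Proof strategy for Theorem \ref{thm:tr}.} The plan is to run the invariant trace formula for the test function $f=f_\uk h$ with $h$ as in \eqref{eq:testft}, and to match the spectral and geometric expansions piece by piece. First I would invoke Lemma \ref{lem:210513l1}, which already gives the two expressions on the spectral side, namely $I_\mathrm{spec}(f_\uk h)=\sum_\pi m_\pi\, \dim\pi_f^{K(N)}\, \Tr(\pi_{S_1}(h_1))=\Tr(T_h|_{S_\uk(\Gamma(N))})$. This identifies the left-hand side of \eqref{eq:zeta} with $I_\mathrm{spec}(f_\uk h)=I(f_\uk h)=I_\mathrm{geom}(f_\uk h)$, so the whole problem is to evaluate the geometric side under the hypothesis $N\geq c_0 N_1^{2n\kappa}$.

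Next I would split the geometric side as $I_\mathrm{geom}(f)=I_\mathrm{unip}(f)+I_4(f)$ in the notation of the introduction, where $I_\mathrm{unip}(f)=I_1(f)+I_2(f)+I_3(f)$ collects all unipotent (including identity) contributions and $I_4(f)$ the remaining non-unipotent terms. The key observation is that $h_1\in \cH^\ur(G(\Q_{S_1}))^\kappa$ has height bounded by $\kappa$, so its support at each $p\in S_1$ lies in a union of double cosets $K_p\mu(p)K_p$ with $\|\mu\|\leq\kappa$; by \cite[Lemma 8.4]{ST}, once $N$ is at least a fixed constant times $N_1^{2n\kappa}=\prod_{p\in S_1}p^{2n\kappa}$, every non-unipotent conjugacy class contributing to the trace formula for this $f$ is killed by the vanishing of the product of local orbital integrals — essentially because a semisimple non-central rational element that is integral outside $S$ and lies in the support at $S_1$ cannot simultaneously be congruent to the identity modulo a large $N$ at the places dividing $N$. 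Hence $I_4(f)=0$ for $N\geq c_0 N_1^{2n\kappa}$, and $I_\mathrm{geom}(f)=I_\mathrm{unip}(f_\uk h)$.

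Now I would apply Lemma \ref{lem:210513l2} with $S_0=S_1$ and $h_{S_0}=\vol(K(N))^{-1}h_1$ (absorbing the constant and noting $h_N=h^{S_1}$ up to the level-$N$ part at primes dividing $N$, which at those primes is the characteristic function of $K_p(N)$; one reconciles this with the statement of Lemma \ref{lem:210513l2} by noting that the bottom-of-the-formula zeta integrals only see the Fourier transform $\widehat{\Phi_{h,r}}$, into which the level structure at $p\mid N$ is encoded). This yields
\[
I_\mathrm{unip}(f_\uk h)=\vol_G\,\vol(K(N))^{-1}h_1(1)\,d_\uk+\frac12\sum_{r=1}^n\sum_{\chi\in\mathscr{X}(S_1)}Z_r(\Phi_{\tf,r},n-\tfrac{r-1}{2},\chi),
\]
using that $h_1(1)=(\vol(K(N))^{-1}h_1)(1)\cdot\vol(K(N))$ so the constant factor lands correctly, and that $d_\uk$ here is the formal degree normalization matching $\tr(\pi_\infty(f_\uk))=1$. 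The only point needing care is that $h_1$ is \emph{unramified} at every $p\in S_1$, so the $S_1$-component of $\Phi_{\tf,r}$ is bi-$H_r(\Z_p)$-invariant and in particular satisfies \eqref{eq:210513b} for every ramified $\chi_p$; therefore every $\chi\neq\trep$ in $\mathscr{X}(S_1)$ also gives $Z_r(\Phi_{\tf,r},\,\cdot\,,\chi)\equiv 0$, collapsing the inner sum to the single term $\chi=\trep$. Finally, applying Lemma \ref{lem:functzeta} at $s=n-\frac{r-1}{2}$ converts each $Z_r(\Phi_{\tf,r},n-\tfrac{r-1}{2},\trep)$ into $\mathbf{C}_{n,r}(\uk)\,\zeta_r(\widehat{\Phi_{h,r}},r-n)$, which is exactly the second line of \eqref{eq:zeta}; note this step also removes the hypothesis $k_n>2n$ used in Lemmas \ref{lem:210513l2} and \ref{lem:functzeta} by meromorphic continuation in $\uk$, leaving only $k_n>n+1$, which is what is needed for Lemma \ref{lem:210513l1}.

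\textbf{Main obstacle.} The delicate step is the vanishing $I_4(f)=0$: one must verify that \cite[Lemma 8.4]{ST} applies uniformly in $N$ and $N_1$ with the explicit threshold $N\geq c_0 N_1^{2n\kappa}$, i.e. that the height bound $\kappa$ on $h_1$ at the primes in $S_1$ translates into a bound on the "size" of any rational semisimple non-central $\gamma$ that could contribute, and that such a $\gamma$ is then incompatible with the congruence condition modulo $N$ imposed by $h_N$. A secondary but real subtlety is bookkeeping the volume normalizations and the precise matching of $f_\uk$ versus $\tf_\uk$ between Lemmas \ref{lem:210513l1} and \ref{lem:210513l2} — the passage from the pseudo-coefficient to the spherical trace function is harmless on the unipotent terms only because of the specific structure of $I_\mathrm{unip}$ recorded in \cite[Proposition 3.8, Proposition 3.11, Lemmas 5.10 and 5.16]{Wakatsuki}, and one should cite this carefully rather than re-derive it.
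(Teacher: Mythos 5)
Your overall architecture — spectral identification via Lemma~\ref{lem:210513l1}, then $I_4(f)=0$ from \cite[Lemma~8.4]{ST}, then Lemma~\ref{lem:210513l2}, then Lemma~\ref{lem:functzeta}, then extending the weight range — matches the paper's. But there is a genuine gap in the middle that the paper handles with a specific conjugation argument you omit.

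The issue is your choice $S_0=S_1$ when invoking Lemma~\ref{lem:210513l2}. The test function $h$ in \eqref{eq:testft} is $\vol(K(N))^{-1}h_1\otimes h_N$, where $h_N$ is the characteristic function of $\prod_{p\notin S_1\sqcup\{\inf\}}K_p(N)$. At a prime $p\mid N$ this is the characteristic function of $K_p(N)\subsetneq K_p$, which is \emph{not} the unramified $h^{S_1}$. To put $h$ in the required form $h_{S_0}h^{S_0}$ one must take $S_0=S_1\sqcup S_N$ (with $h_{S_0}$ absorbing both $h_1$ and the level structure at $p\mid N$); the paper implicitly does this by writing $h_{S_0}h^{S_0}=h$. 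Your parenthetical remark — that the zeta integral only sees $\widehat{\Phi_{h,r}}$ — doesn't repair this, because Lemma~\ref{lem:210513l2} is stated for $\tf$ of the form $\tf_\uk h_{S_0}h^{S_0}$, and with the correct $S_0$, the sum is over $\mathscr{X}(S_1\sqcup S_N)$, a strictly larger set of characters that includes quadratic $\chi$ ramified at primes dividing $N$.

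Consequently your argument for collapsing the $\chi$-sum to $\chi=\trep$ is incomplete: you only establish the scaling-invariance \eqref{eq:210513b} at $p\in S_1$ from sphericality of $h_1$, which handles $\chi$ ramified only in $S_1$, but says nothing about $\chi$ ramified at some $p\mid N$. The paper's key additional input — which you are missing — is the conjugation identity $\mathscr{M}(a_p)^{-1}K_p(N)\mathscr{M}(a_p)=K_p(N)$ and $\mathscr{M}(a_p)^{-1}\mu(b_p)\mathscr{M}(a_p)=\mu(b_p)$ for $a_p\in\Z_p^\times$, where $\mathscr{M}(a)=\diag(1,\dots,1,a,\dots,a)$; this shows that \eqref{eq:210513b} holds at \emph{every} finite prime, in particular at $p\mid N$, so $Z_r(\Phi_{\tf,r},\cdot,\chi)\equiv 0$ for every $\chi\neq\trep$. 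Without this you cannot kill the non-trivial characters ramified at the level primes. The final regularization step (extending from $k_n>2n$ to $k_n>n+1$) you describe as "meromorphic continuation in $\uk$"; the paper argues via rationality of both sides as functions of $\uk$, citing \cite[Proof of Theorem 5.17]{Wakatsuki} — same spirit, but the rational-function formulation is what makes it precise.
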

\begin{proof}
Let $f=f_\uk h$ and $\tf=\tf_\uk h$. 
By Lemma \ref{lem:210513l1}, it is sufficient to prove that the geometric side $I_\mathrm{geom}(f)$ equals the right-hand side of \eqref{eq:zeta}.
If one uses the results in \cite{Arthur} and applies \cite[Lemma 8.4]{ST} by putting $\Xi:G\subset \GL_m$, $m=2n$, $B_\Xi=1$, $c_\Xi=c_0$ in their notations, then one gets $I_\mathrm{geom}(f)=I_\mathrm{unip}(f)$.
Hence, by Lemma \ref{lem:210513l2} and putting $h_{S_0}h^{S_0}=h$, we have
\begin{equation}\label{eq:210513a}
\Tr\left( T_h|_{S_\uk(\Gamma(N))}\right)=\vol_G\, \vol(K(N))^{-1} \,h_1(1)\, d_\uk  + \frac{1}{2}\sum_{r=1}^n \sum_{\chi\in \mathscr{X}(S_0)} Z_r(\Phi_{\tf,r},n-\tfrac{r-1}{2},\chi)
\end{equation}
for sufficiently large $k_n$.
Let $\mathscr{M}(a):=\diag(\overbrace{1,\dots,1}^n,\overbrace{a,\dots,a}^n)$ for $a\in\A^\times$. 
For any $a_p\in\Z_p^\times$, $b_p\in\Q_p^\times$, $\mu\in X_*(T)$, we have
\[
\mathscr{M}(a_p)^{-1}K_p(N)\mathscr{M}(a_p)=K_p(N), \quad \mathscr{M}(a_p)^{-1}\mu(b_p)\mathscr{M}(a_p)=\mu(b_p).
\]
Hence, \eqref{eq:210513b} holds for any $p<\inf$, and so $Z_r(\Phi_{\tf,r},n-\tfrac{r-1}{2},\chi)$ vanishes for any $\chi\neq \trep$.
Therefore, by Lemma \ref{lem:functzeta} we obtain the assertion \eqref{eq:zeta} for sufficiently large $k_n$.
By the same argument as in \cite[Proof of Theorem 5.17]{Wakatsuki}, we can prove that this equality \eqref{eq:zeta} holds in the range $k_n>n+1$, because the both sides of \eqref{eq:zeta} are rational functions of $\uk$ in that range, see Lemma \ref{lem:functzeta} and \cite[Proposition 5.3]{Wakatsuki}.
Thus, the proof is completed. 
\end{proof}

Let $S$ denote a finite subset of places of $\Q$, and suppose $\inf\in S$.
For each character $\chi=\otimes_v \chi_v$ on $\Q^\times\R_{>0}\bsl \A^\times$, we set
\[
L^S(s,\chi)=\prod_{p\notin S}L_p(s,\chi_p)  , \quad  L(s,\chi)=\prod_{p<\inf}L_p(s,\chi_p),
\]
\[
\zeta^S(s)=L^S(s,\trep)=\prod_{p\not\in S}(1-p^{-s})^{-1}  \quad \text{and} \quad
\zeta(s)=L(s,\trep),
\]
where $L_p(s,\chi_p)=(1-\chi_p(p)p^{-s})^{-1}$ if $\chi_p$ is unramified, and $L_p(s,\chi_p)=1$ if $\chi_p$ is ramified.
\begin{lem}\label{lem:zeta}
Let $s\in\R$. For $s>1$, one has
\[
\zeta^S(s) \leq \zeta(s) \quad   \text{and} \quad (\zeta^{S})'(s)\ll \frac{2s \zeta(s)}{s-1},
\]
where $(\zeta^{S})'(s)=\frac{\d}{\d s}\zeta^S(s)$.
For $s\leq -1$, one has
\[
|\zeta^S(s)| \leq (N_S)^{-s} |\zeta(s)|
\]
where $N_S=\prod_{p\in S\setminus \{\inf\}}p$.
\end{lem}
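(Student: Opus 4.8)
The plan is to handle the three inequalities separately, in each case reducing to an elementary comparison between the Euler product over primes outside $S$ and the full Euler product.

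First, for $s > 1$, the bound $\zeta^S(s) \leq \zeta(s)$ is immediate: both are absolutely convergent Euler products with positive factors $(1-p^{-s})^{-1} > 1$, and $\zeta^S(s)$ simply omits the finitely many factors for $p \in S_1$ (recall $S = S_1 \sqcup \{\infty\}$ and $p = \infty$ contributes nothing), so $\zeta^S(s) = \zeta(s)\prod_{p \in S_1}(1-p^{-s}) \leq \zeta(s)$. For the derivative bound, I would differentiate the Dirichlet series $\zeta^S(s) = \sum_{(n,N_1)=1} n^{-s}$ termwise to get $(\zeta^S)'(s) = -\sum_{(n,N_1)=1}(\log n)\, n^{-s}$, so $|(\zeta^S)'(s)| \leq \sum_{n \geq 1} (\log n) n^{-s} = -\zeta'(s)$. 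It then suffices to bound $-\zeta'(s)$ by $\frac{2s\,\zeta(s)}{s-1}$ for $s > 1$; this is a standard estimate, obtained for instance by comparing $\sum (\log n) n^{-s}$ with the integral $\int_1^\infty (\log t)\, t^{-s}\,dt = (s-1)^{-2}$ together with the crude bound $\zeta(s) \geq (s-1)^{-1}$, or by writing $-\zeta'(s)/\zeta(s) = \sum_p (\log p)(p^s - 1)^{-1}$ and estimating. I would spell out whichever of these gives the constant $2$ most cleanly — the implied constant is absolute, independent of $S$, which is the only point that matters for the later application.

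For $s \leq -1$, the key is the functional equation for $\zeta$, or more directly a comparison of the finite Euler products. Since $L_p(s,\trep) = (1-p^{-s})^{-1}$ for $p \notin S$, we have $\zeta^S(s) = \zeta(s)\prod_{p \in S_1}(1-p^{-s})$, and for $s \leq -1$ each factor satisfies $|1 - p^{-s}| = p^{-s} - 1 \leq p^{-s}$. Multiplying over $p \in S_1$ and using $\prod_{p \in S_1} p = N_1$ (here $N_1$ is squarefree, so this is exactly the product of its prime divisors) gives $\prod_{p \in S_1}|1-p^{-s}| \leq N_1^{-s}$, hence $|\zeta^S(s)| \leq N_1^{-s}\,|\zeta(s)|$ as claimed. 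One should note that $\zeta(s)$ here is the completed-away-from-$\infty$ product $\prod_{p < \infty} L_p(s,\trep)$, which for $\Re(s) \leq -1$ is to be interpreted via meromorphic continuation (equivalently, via the functional equation relating it to $\zeta(1-s)$); the inequality is between the analytic continuations, and the finite-product identity $\zeta^S(s) = \zeta(s)\prod_{p\in S_1}(1-p^{-s})$ persists under continuation since the correction factor is entire.

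I do not expect any serious obstacle here; the only mild subtlety is bookkeeping about what "$\zeta(s)$" and "$\zeta^S(s)$" mean for $s \leq -1$ (analytic continuation versus divergent product) and making sure the constant in the derivative bound comes out absolute. The hardest of the three, if any, is the derivative estimate, purely in that one must choose the comparison carefully to land the explicit constant — but this is routine analytic number theory and no new idea is required.
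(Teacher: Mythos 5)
Your first inequality coincides with the paper's proof (both just note that the omitted Euler factors are $\geq 1$). For the derivative bound, your primary route is different from the paper's: you differentiate the Dirichlet series termwise, get $|(\zeta^S)'(s)| \leq -\zeta'(s) = \sum (\log n)\, n^{-s}$, and then estimate this by comparison with $\int_1^\infty (\log t)\, t^{-s}\,\mathrm{d}t$. The paper instead works with the logarithmic derivative $(\zeta^S)'/\zeta^S = \sum_{p \notin S}(-p^{-s}\log p)/(1-p^{-s})$, bounds it by $2\sum_p p^{-s}\log p$, estimates that sum via partial summation and the prime number theorem to get $\leq 2s/(s-1)$, then multiplies by $\zeta^S(s) \leq \zeta(s)$ — this is essentially your second sketched alternative. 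Your primary route works too, but note that the integrand $(\log t)\,t^{-s}$ is not monotone near $t=1$ (it vanishes at $t=1$ and increases to a maximum at $t=e^{1/s}$), so the step $\sum_{n\geq 2}(\log n)n^{-s}\leq \int_1^\infty(\log t)t^{-s}\,\mathrm{d}t$ needs a small adjustment for $1<s<1/\log 2$; this is harmless for the $\ll$ statement but means a line or two of care if you want the explicit constant $2$. The paper's log-derivative route sidesteps this cleanly.

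For the $s\leq -1$ case you give a correct and complete argument based on $\zeta^S(s)=\zeta(s)\prod_{p\in S_1}(1-p^{-s})$ and $|1-p^{-s}|\leq p^{-s}$; the squarefree hypothesis on $N_1$ (imposed earlier in the paper) is exactly what makes $\prod_{p\in S_1}p = N_1$. It is worth flagging that the paper's own proof of this lemma silently omits the $s\leq -1$ case entirely — its proof block stops after the derivative bound — so your write-up here is in fact more complete than what is printed. Your remark about interpreting $\zeta^S$ and $\zeta$ via analytic continuation for $s\leq -1$ and the persistence of the finite Euler-factor identity under continuation is exactly the right point to make explicit.
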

\begin{proof}
First of all, $(1-p^{-s})^{-1}\geq 1$ for $p\in S$. Hence $\zeta^S(s)\leq\zeta(s)$.
Let $\log \zeta^S(s) = \sum_{p\not\in S} \log(1-p^{-s})^{-1}$. Then
\[
\frac{(\zeta^{S})'(s)}{\zeta^S(s)}=\sum_{p\not\in S}\frac{-p^{-s}\log p}{1-p^{-s}}.
\]
If $s>1$, $1-p^{-s}\geq \frac{1}{2}$. Hence
\[
\left| \frac{(\zeta^{S})'(s)}{\zeta^S(s)} \right| \leq 2\sum_{p\not\in S} p^{-s}\log p \leq 2\sum_p p^{-s}\log p.
\]
By partial summation, $\sum_p p^{-s}\log p\leq \ds\int_1^\infty (\sum_{p\leq x}\log p) s x^{-s-1} \d x\leq \ds\int_1^\infty s x^{-s} \d x=\frac{s}{s-1}$.
Here we use the prime number theorem: $\sum_{p\leq x} \log p \sim x$.
Therefore, $(\zeta^{S})'(s)\ll \dfrac{2s \zeta(s)}{s-1}$.
\end{proof}

Set $\mathfrak{D}=\{ d(\Q^\times)^2 \mid d\in \Q^\times\}$.
For each $d\in \mathfrak{D}$, we denote by $\chi_d=\prod_v \chi_{d,v}$ the quadratic character on $\Q^\times\R_{>0}\bsl \A^\times$ corresponding to the quadratic field $\Q(\sqrt{d})$ via class field theory.
If $d=1$, then $\chi_d$ means the trivial character $\trep$.
For each positive even integer $m$, we set
\[
\varphi_{d,m}^S(s)=\zeta^S(2s-m+1)\zeta^S(2s) \frac{L^S(m/2,\chi_d)}{L^S(2s-m/2+1,\chi_d)} N(\mathfrak{f}_d^S)^{(m-1)/2-s}
\]
where $\mathfrak{f}_d^S$ denotes the conductor of $\chi_d^S=\prod_{p\not\in S}\chi_{d,p}$.
For each $u_S\in \Q_S=\prod_{v\in S}\Q_v$, one sets
\[
\mathfrak{D}(u_S)=\left\{ d(\Q^\times)^2 \mid d\in \Q^\times ,\,\, d\in u_S(\Q_S^\times)^2 \right\}.
\]
We need the Dirichlet series
\[
D_{m,u_S}^S(s)=\sum_{d(\Q^\times)^2  \in \mathfrak{D}(u_S) }\varphi_{d,m}^S(s).
\]
The following proposition is a generalization of \cite[Proposition 3.6]{IS2}.
\begin{prop}\label{prop:1}
Let $m\geq 2$ be an even integer.
Suppose $(-1)^{m/2}u_\inf>0$ for $u_S=(u_v)_{v\in S}$ (namely the term of $d (\Q^\times)^2= (\Q^\times)^2$ does not appear in $D_{m,u_S}^S(s)$ if $(-1)^{m/2}=-1$).
The Dirichlet series $D_{m,u_S}^S(s)$ is meromorphically continued to $\C$, and is holomorphic at any $s\in \Z_{\leq 0}$.
\end{prop}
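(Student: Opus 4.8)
The plan is to peel off the two explicit zeta factors and reduce the proposition to the meromorphic continuation of the character‑averaged $L$‑ratio. Since only the last three factors of $\varphi_{d,m}^S(s)$ depend on $d$, we have $D_{m,u_S}^S(s)=\zeta^S(2s-m+1)\,\zeta^S(2s)\,\mathcal{D}(s)$ with
\[
\mathcal{D}(s)=\sum_{d(\Q^\times)^2\in\mathfrak{D}(u_S)}\frac{L^S(m/2,\chi_d)}{L^S(2s-m/2+1,\chi_d)}\,N(\mathfrak{f}_d^S)^{(m-1)/2-s}.
\]
The prefactor $\zeta^S(2s-m+1)\zeta^S(2s)$ is meromorphic on $\C$ with poles only at $s=m/2$ and $s=1/2$, neither of which lies in $\Z_{\leq 0}$ (recall $m$ is even), so it suffices to continue $\mathcal{D}(s)$ and control its poles. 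This is also the point at which the hypothesis $(-1)^{m/2}u_\infty>0$ enters: it removes the square class $d=(\Q^\times)^2$ from the sum exactly when $(-1)^{m/2}=-1$, i.e. precisely the term whose factor $1/L^S(2s-m/2+1,\trep)=1/\zeta^S(2s-m/2+1)$ would otherwise contribute a pole at $s=(m-2)/4\in\Z_{\geq 0}$ (and at $s=0$ when $m=2$).

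Next I would open up both $L$‑functions in the character variable. For $\Re(s)$ large, $1/L^S(2s-m/2+1,\chi_d)=\sum_b\mu(b)\chi_d(b)\,b^{-(2s-m/2+1)}$ converges absolutely, and $L^S(m/2,\chi_d)=\sum_a\chi_d(a)\,a^{-m/2}$ converges absolutely for even $m\geq 4$; the boundary case $m=2$ is dealt with either by partial summation together with the elementary bound $L^S(1,\chi_d)\ll\log(2+N(\mathfrak{f}_d))$, or by establishing the identity below with an auxiliary parameter in the domain of absolute convergence and specializing at the end. Interchanging the (now absolutely convergent) sums gives
\[
\mathcal{D}(s)=\sum_{a\geq 1}\sum_{b\geq 1}\frac{\mu(b)}{a^{m/2}\,b^{\,2s-m/2+1}}\;\Theta_{ab}\!\left(s-\tfrac{m-1}{2}\right),\qquad
\Theta_\ell(w):=\sum_{d(\Q^\times)^2\in\mathfrak{D}(u_S)}\chi_d(\ell)\,N(\mathfrak{f}_d^S)^{-w},
\]
the inner $d$‑sum again omitting $d=(\Q^\times)^2$ when $(-1)^{m/2}=-1$.

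The core computation is the evaluation of $\Theta_\ell(w)$. By quadratic reciprocity, as $d$ runs over square classes the value $\chi_d(\ell)$ depends on $d$ only through its squarefree kernel and through congruence conditions at the primes dividing $2\ell$ and those in $S$; summing over squarefree integers in $\mathfrak{D}(u_S)$, weighted by $N(\mathfrak{f}_d^S)^{-w}$ (which differs from the size of the squarefree kernel only by bounded Euler factors at $S$), one obtains, by the standard manipulation of double Dirichlet series over quadratic characters as in \cite[Proposition 3.6]{IS2} (see also \cite{Shintani,Yukie}),
\[
\Theta_\ell(w)=\frac{c_{\ell,S}(w)}{\zeta^S(2w)}\,L^S(w,\chi_\ell),
\]
where $\chi_\ell:=\chi_{\ell(\Q^\times)^2}$ (so $\chi_\ell=\trep$ exactly when $\ell$ is a perfect square) and $c_{\ell,S}(w)$ is a finite Euler product over $p\mid 2\ell$ and $p\in S$, entire in $w$ and bounded polynomially in $\ell$ on vertical strips; this is the place where the precise local factors, the dependence on the parity/sign condition, and the comparison of $N(\mathfrak{f}_d^S)$ with $N(\mathfrak{f}_d)$ are recorded. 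Substituting back, the factor $1/\zeta^S(2(s-\tfrac{m-1}{2}))=1/\zeta^S(2s-m+1)$ cancels the prefactor, leaving
\[
D_{m,u_S}^S(s)=\zeta^S(2s)\sum_{a,b}\frac{\mu(b)\,c_{ab,S}(s-\tfrac{m-1}{2})}{a^{m/2}\,b^{\,2s-m/2+1}}\;L^S\!\left(s-\tfrac{m-1}{2},\chi_{ab}\right).
\]
Now split according to whether $ab$ is a perfect square. For $ab$ a non‑square, $L^S(s-\tfrac{m-1}{2},\chi_{ab})$ is entire, its conductor divides a bounded multiple of $ab$, and the polynomial bound on $c_{ab,S}$ together with the convexity bound for quadratic $L$‑functions uniform in the conductor shows that the corresponding partial sum converges locally uniformly on all of $\C$, hence is entire. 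For $ab$ a square, the constraint $\mu(b)\neq 0$ forces $a=bc^2$ with $b$ squarefree, and the sub‑sum reorganizes into $\zeta^S(s-\tfrac{m-1}{2})$ times a product of a $c$‑sum that is absolutely convergent (exponent $m\geq 2$) and a $b$‑Euler‑product of the shape $1/\zeta^S(2s+1)$ up to bounded correction; this part is therefore meromorphic on $\C$ with poles confined to $s=(m+1)/2$ and the zeros of $\zeta^S(2s+1)$. Altogether $D_{m,u_S}^S$ extends meromorphically to $\C$, with poles only at $s=\tfrac12$, $s=\tfrac{m+1}{2}$ (a half‑integer, $m$ even), and the zeros of $\zeta^S(2s+1)$ (half‑integers $\leq-\tfrac12$ or points in the strip $-\tfrac12<\Re(s)<0$); none of these lies in $\Z_{\leq 0}$, and at $s=0$ the point $2s+1=1$ is a pole of $\zeta^S$, so $1/\zeta^S(2s+1)$ vanishes there rather than blowing up. Hence $D_{m,u_S}^S(s)$ is holomorphic at every $s\in\Z_{\leq 0}$, as claimed.

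The step I expect to be the real work is the evaluation of $\Theta_\ell(w)$ together with the uniform‑in‑$\ell$ estimates needed to sum the non‑square part over all of $\C$: this requires the reciprocity bookkeeping, the careful comparison of $N(\mathfrak{f}_d^S)$ with $N(\mathfrak{f}_d)$ in the $S$‑adic setting, and a convexity bound for quadratic $L$‑functions on vertical lines, and is essentially \cite[Proposition 3.6]{IS2} transplanted to partial zeta/$L$‑functions. The remaining points — justifying the interchange of summation and handling the boundary case $m=2$ where $L^S(1,\chi_d)$ sits on the edge of convergence — are routine.
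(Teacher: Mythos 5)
Your strategy — opening both $L$-functions into Dirichlet series, evaluating the inner character sum $\Theta_\ell(w)$ via reciprocity, and then summing the resulting quadratic $L$-functions term-by-term over $a,b$ — is genuinely different from the paper's, which simply invokes the established meromorphic continuation of Shintani double zeta functions: \cite[Corollary 4.23]{KTW} for $m>3$ and \cite{HW,Yukie2} for $m=2$, both resting on prehomogeneous-vector-space theory and its two-variable functional equations rather than on elementary rearrangement.

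The elementary route as written, however, has a real gap at the non-square step. The claim that the sub-sum over non-square $ab$ ``converges locally uniformly on all of $\C$'' cannot hold. Writing $w=s-\tfrac{m-1}{2}$ and $\sigma=\Re(s)$, the functional equation forces, for $\Re(w)<0$,
\[
|L^S(w,\chi_{ab})| \asymp (\operatorname{cond}\chi_{ab})^{\frac12-\Re(w)} \ll (ab)^{\frac{m}{2}-\sigma+\epsilon},
\]
so the summand has magnitude on the order of
\[
\frac{(ab)^{m/2-\sigma+\epsilon}}{a^{m/2}\, b^{\,2\sigma-m/2+1}} = a^{-\sigma+\epsilon}\, b^{\,m-3\sigma-1+\epsilon}.
\]
Already at $s=0$ this is $\asymp a^{\epsilon}b^{m-1+\epsilon}$, which is unbounded (take $a=1$, $b$ squarefree and large, so $\operatorname{cond}\chi_{b}\asymp b$); no cancellation from $\mu(b)$ can repair this, since the individual terms do not tend to zero. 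The convexity bound you invoke is useful only near the critical line $\Re(w)=\tfrac12$, i.e.\ $\Re(s)=m/2$, where it does give convergence for $m\geq 4$, but it cannot carry the continuation to the left. To make the strategy work one needs the genuine two-variable functional equation for the double Dirichlet series $\sum_d L^S(w,\chi_d)\,N(\mathfrak{f}_d^S)^{-s}$ \emph{before} specializing to the line $w=s-\tfrac{m-1}{2}$ — a Goldfeld--Hoffstein/Shintani-type input, which is precisely what \cite{KTW} provides and what the paper's proof relies on. Your closing remark deferring to \cite[Proposition 3.6]{IS2} in effect concedes this, but then the proof is no longer the elementary reduction it purports to be, and the intermediate convergence claim, taken at face value, is false.
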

\begin{proof}
See \cite[Corollary 4.23]{KTW} for the case $m>3$. 
For $m=2$, this statement can be proved by using \cite{HW,Yukie2}.
\end{proof}

\begin{thm}\label{thm:asy}
Fix a parameter $\uk$ such that $k_n>n+1$. 
Let $h_1\in \cH^\ur(G(\Q_{S_1}))^\kappa$ and let $h\in C_c^\inf (G(\A_f))$ be a test function on $G(\A_f)$ given as \eqref{eq:testft}. 
Suppose $\sup_{x\in G(\Q_{S_1})}|h_1(x)|\leq 1$.
Then, there exist positive constants $a$, $b$, and $c_0$ such that, if $N\geq c_0 N_1^{2n\kappa}$, 
\begin{equation*}
 \Tr\left( T_h|_{S_\uk(\Gamma(N))}\right)=\vol_G\, \vol(K(N))^{-1}\, h_1(1)\, d_{\uk} +\vol(K(N))^{-1}\, O(N_1^{a\kappa+b}N^{-n}).
\end{equation*}
Here the constants $a$ and $b$ do not depend on $\kappa$, $N_1$ and $N$. 
See Lemma \ref{lem:210513l2} for $\vol_G$ and $d_\uk$.
\end{thm}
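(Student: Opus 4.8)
The plan is to start from the exact identity established in Theorem~\ref{thm:tr}, namely
\[
\Tr\left( T_h|_{S_\uk(\Gamma(N))}\right)=\vol_G\, \vol(K(N))^{-1}\, h_1(1)\, d_\uk + \frac{1}{2}\sum_{r=1}^n \mathbf{C}_{n,r}(\uk)\, \zeta_r(\widehat{\Phi_{h,r}},r-n),
\]
valid once $N\geq c_0 N_1^{2n\kappa}$, and to bound the error sum $\frac{1}{2}\sum_{r=1}^n \mathbf{C}_{n,r}(\uk)\, \zeta_r(\widehat{\Phi_{h,r}},r-n)$ by $\vol(K(N))^{-1}\, O(N_1^{a\kappa+b} N^{-n})$. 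Since $\uk$ is fixed, the factors $\mathbf{C}_{n,r}(\uk)$ are harmless constants, so the whole problem reduces to estimating each special value $\zeta_r(\widehat{\Phi_{h,r}}, r-n)$ for $1\le r\le n$, where $h=\vol(K(N))^{-1} h_1\otimes h_N$. First I would pull out the factor $\vol(K(N))^{-1}$ from $h$, so that $\widehat{\Phi_{h,r}}=\vol(K(N))^{-1}\widehat{\Phi_{h_1\otimes h_N,r}}$, and it remains to show $\zeta_r(\widehat{\Phi_{h_1\otimes h_N,r}}, r-n)=O(N_1^{a\kappa+b} N^{-n})$.

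The key step is to use Saito's explicit formula \cite{Saito2} for the Shintani zeta function attached to the space of symmetric matrices, which expresses $\zeta_r(\widehat{\Phi_0}, s)$ as a finite sum of products of (i) local zeta integrals at the ramified places $p\in S_1$ and $p\mid N$, (ii) products of $\zeta^S$-factors (for $r$ odd) or the Dirichlet series $D_{m,u_S}^S(s)$ (for $r=m$ even), evaluated at the point $s=r-n\le 0$. The test function $\Phi_0=\widehat{\Phi_{h_1\otimes h_N,r}}$ factors as a product over places: at $p\notin S_1\cup S_N$ it is the standard lattice characteristic function (since $h$ is $K_p$-bi-invariant and unramified there), at $p\mid N$ it is governed by $h_N=\mathbbm 1_{K_p(N)}$ via the explicit integral defining $\Phi_{f,r}$, and at $p\in S_1$ it is governed by $h_1\in\cH^\ur(G(\Q_p))^\kappa$. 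Here the point is that at $s=r-n\le 0$ the series $D_{m,u_S}^S(s)$ is holomorphic by Proposition~\ref{prop:1}, and $\zeta^S(s)$ at negative integers is controlled by Lemma~\ref{lem:zeta}, which gives $|\zeta^S(s)|\le N_1^{-s}|\zeta(s)|$; since $s=r-n$ is a fixed negative integer, $N_1^{-(r-n)}=N_1^{n-r}\le N_1^{n}$ contributes to the exponent of $N_1$, and the functional equation / Lemma~\ref{lem:functzeta} has already been used in Theorem~\ref{thm:tr} precisely to move us to the region $\Re(s)\le 0$ where everything converges. The $N$-dependence comes entirely from the ramified local factors at $p\mid N$: the Fourier transform of the characteristic function of the congruence-level data for $V_r$ at $p$, evaluated in the Shintani zeta integral, produces a factor that decays like $N^{-\dim V_r(\text{relevant part})}$; tracking the representation $\rho$ of $H_r$ on $V_r$ one sees the combined decay is $N^{-n}$ (this matches the arithmetic of the dimension formula, where the leading term is $\vol(K(N))^{-1}\sim N^{2n^2+n}$ up to $C_N$, and the unipotent error is $n$ powers of $N$ smaller). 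The $S_1$-dependence $N_1^{a\kappa+b}$ comes from bounding the local zeta integrals at $p\in S_1$ attached to Hecke elements of height $\le\kappa$ with values $\le 1$: the support of such an element lies in a union of $K_p\mu(p)K_p$ with $\|\mu\|\le\kappa$, whose volume grows polynomially in $p^\kappa$, and the resulting local zeta integral at $s=r-n$ is bounded by $p^{a\kappa+b}$ with $a,b$ absolute (depending only on $n=\dim G$-type data), then multiplied over $p\in S_1$ to give $\prod_{p\in S_1}p^{a\kappa+b}=N_1^{a\kappa+b}$.

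Putting these together: each $\zeta_r(\widehat{\Phi_{h_1\otimes h_N,r}}, r-n)$ is $O(N_1^{a\kappa+b} N^{-n})$ with $a,b$ independent of $\kappa, N_1, N$, and summing the finitely many $r$ with bounded constants $\mathbf{C}_{n,r}(\uk)$ preserves the bound; restoring the $\vol(K(N))^{-1}$ factor yields the claimed error term $\vol(K(N))^{-1}\, O(N_1^{a\kappa+b} N^{-n})$. I expect the main obstacle to be the uniform control of the ramified local zeta integrals at $p\mid N$ and at $p\in S_1$ simultaneously: one must show the $N$-aspect decay is exactly $N^{-n}$ (not merely some negative power), which requires computing the Fourier transform of the level-$N$ congruence data inside $V_r$ and identifying the precise exponent via the prehomogeneous structure $(H_r, V_r)$; and separately one must extract absolute constants $a,b$ from Saito's formula uniformly in $r$, which is where the remark about avoiding $D_{m,u_S}^S(s)$ for even $3<r<n$ via Lemma~\ref{lem:functzeta} becomes essential, since otherwise the growth in $S$ of that Dirichlet series would be hard to bound. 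A secondary technical point is ensuring the constant $c_0$ (from the condition $N\ge c_0 N_1^{2n\kappa}$ needed for $I_\mathrm{geom}(f)=I_\mathrm{unip}(f)$) can be chosen compatibly with the constants appearing in the explicit-formula estimates, but this is just a matter of taking the maximum of finitely many constants.
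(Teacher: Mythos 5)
Your proposal captures the paper's overall strategy accurately: start from the exact identity of Theorem~\ref{thm:tr}, pull out $\vol(K(N))^{-1}$, invoke Saito's explicit formula to factor each $\zeta_r(\widehat{\Phi_{h,r}},r-n)$ into a partial Euler product times local zeta integrals at $S_1\cup S_N$, use Lemma~\ref{lem:zeta} at negative integers, and rely on Proposition~\ref{prop:1} (via Lemma~\ref{lem:functzeta}) to suppress the $D^S_{m,u_S}$-series for even $r$. You also correctly identify that the $N_1^{a\kappa+b}$ factor should come from the local integrals at $S_1$ with height $\le\kappa$, and that the $N^{-n}$ decay should come from the ramified factors at $p\mid N$ via the prehomogeneous structure of $(H_r,V_r)$.

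However, there is a genuine gap in the case $r=n$, which your outline treats as if it were the same as $r<n$. At $r=n$ the special value sits at $s=r-n=0$, and this is precisely where the local zeta integrals $Z_{r,p}(\widehat{\Phi_{p}}\chi,s,\mathscr{O}_p)$ can have a simple pole (they are holomorphic only for $\Re(s)<0$). Consequently the direct Saito-formula evaluation at $s=0$ does not make sense term by term, and a separate argument is needed. The paper handles this by proving that the archimedean local zeta integral $Z_{n,\inf}(\Phi_{\tf_{\underline{l}},n},\tfrac{n+1}{2},\mathscr{O}_\inf)$ vanishes for every orbit $\mathscr{O}_\inf$ (Lemma~\ref{lem:20210602}, relying on the vanishing of the unipotent integral $\int_\R\tf_{\underline{l}}(g_1^{-1}n_1(t)g_2)\,\d t=0$), then combining this with Lemma~\ref{lem:functzeta} and the residue formula from Yukie to extract a finite value with the claimed $N^{-\frac{n(n+1)}{2}}$ decay; this cannot be reduced to the generic case. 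A second, more minor point: your outline bounds the local zeta integral at $p\in S_1$ ``directly at $s=r-n$,'' but the actual mechanism applies the local functional equation (Ikeda/Sweet) to pass from the Fourier-transformed test function at $s=r-n\le 0$ to $|\Phi_{h_1,r}|$ at $s=n-\tfrac{r-1}{2}>0$, and only there does Assem's estimate give the polynomial-in-$p^\kappa$ bound; without this step, control of the Fourier transform of a Hecke element of height $\kappa$ at a nonpositive argument is not immediate. Finally, the paper isolates $r=2<n$ as a fourth case using a different explicit formula (from Hoffmann--Wakatsuki) because the binary quadratic form zeta function behaves differently from the odd and generic-even cases; your uniform treatment glosses over this distinction.
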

\begin{proof}
Set
\[
I(\tf,r)=\vol(K(N)) \times \zeta_r(\widehat{\Phi_{h,r}},r-n)  \qquad  (1 \leq r \leq n).
\]
By Theorem \ref{thm:tr}, it is sufficient to prove $I(\tf,r)=O(N_1^{a\kappa+b}N^{-n})$.


Let $R$ be a finite set of places of $\Q$.
Take a Haar measure $\d x_\inf$ on $V_r(\R)$, and for each prime $p$ we write $\d x_p$ for the Haar measure on $V_r(\Q_p)$ normalized by $\int_{V_r(\Z_p)}\d x_p=1$.
For a test function $\Phi_R\in C_c^\inf(V_r(\Q_R))$ and an $H_r(\Q_R)$-orbit $\mathscr{O}_R\in V_r^0(\Q_{R})/H_r(\Q_{R})$, we set
\[
Z_{r,R}(\Phi_R,s,\mathscr{O}_R)=c_R \int_{\mathscr{O}_R} \Phi_R(x)\, |\det(x)|_{R}^{s-\frac{r+1}{2}}  \, \d x,
\]
where $c_R=\prod_{p\in R,\, p<\inf}(1-p^{-1})^{-1}$, $|\;|_R=\prod_{v\in R}|\;|_v$, and $\d x=\prod_{v\in R}\d x_v$.
It is known that $Z_{r,R}(\Phi_R,s,\mathscr{O}_R)$ absolutely converges for $\Re(s)\geq \frac{r+1}{2}$, and is meromorphically continued to the whole $s$-plane.

Suppose that $R$ does not contain $\inf$, that is, $R$ consists of primes.
Write $\eta_p(x)$ for the Clifford invariant of $x\in V_r^0(\Q_p)$, cf. \cite[Definition 2.1]{Ikeda}, and set $\eta_R((x_p)_{p\in R})=\prod_{p\in R}\eta_p(x_p)$.
For $\chi=\trep_R$ (trivial) or $\eta_R$, we put $(\Phi_R\chi)(x)=\Phi_R(x)\, \chi(x)$. 
It follows from the local functional equation \cite[Theorems 2.1 and 2.2]{Ikeda} over $\Q_p$ $(R=\{p\})$ that $Z_{r,p}(\Phi_p\chi,s,\mathscr{O}_p)$ is holomorphic in the range $\Re(s)<0$, and $Z_{r,p}(\Phi_p\chi,s,\mathscr{O}_p)$ possibly has a simple pole at $s=0$.  
Hence, for any $R$, $Z_{r,R}(\Phi_R\chi,s,\mathscr{O}_R)$ does not have any pole in the area $\Re(s)<0$, but it may have a pole at $s=0$.
Let $\widehat{\Phi_{R}}$ denote the Fourier transform of $\Phi_{R}\in C_c^\inf(V_r(\Q_{R}))$ over $\Q_{R}$ for $\prod_{v\in R}\psi_v(\langle \; , \; \rangle)$, where $\psi_v=\psi|_{\Q_v}$.

Define $\Phi_{h_1,r}(x)= h_1(\begin{pmatrix}I_n & * \\ O_n & I_n \end{pmatrix})\in C_c^\inf(V_r(\Q_{S_1}))$, $*=\begin{pmatrix}x&0 \\ 0 &0 \end{pmatrix}\in V_n$.
Note that this definition is compatible with $\Phi_{\tf,r}$ since $h_1$ is spherical for $\prod_{p\in S_1}K_p$.
Set
\[
\mathscr{Z}_r(S_1,h_1)=  \sum_{\mathscr{O}_{S_1}\in V_r^0(\Q_{S_1})/H_r(\Q_{S_1})} \Big|  Z_{r,S_1}(\widehat{\Phi_{h_1,r}}\chi_r,r-n,\mathscr{O}_{S_1}) \Big| ,
\]
where
\[
\chi_r=\begin{cases} \trep_{S_1} &  \text{if ($r$ is odd and $r<n$) or $r=2<n$}, \\ \eta_{S_1} & \text{if $r$ is even and $2<r<n$}, \end{cases}
\]
and
\[
\mathscr{Z}_n(S_1,h_1)= \sum_{\mathscr{O}_{S_1}\in V_r^0(\Q_{S_1})/H_r(\Q_{S_1})}\Big|  Z_{n,S_1}(\Phi_{h_1,n},\tfrac{n+1}{2},\mathscr{O}_{S_1}) \Big| \quad \text{if $r=n$.}
\]
It follows from Saito's formula \cite[Theorem 2.1 and \S 3]{Saito2} that the zeta function $\zeta_r(\widehat{\Phi_{h,r}},s)$ is expressed by a (finite or infinite) sum of  Euler products of $Z_{r,p}(\Phi_p\chi_p,s,\mathscr{O}_p)$ $(\chi_p=\trep_p$, $\eta_p)$ or its finite sums, and he explicitly calculated the local zeta function $Z_{r,p}(\Phi_p \chi_p,s,\mathscr{O}_p)$ in \cite[\S 2]{Saito} if $\Phi_p$ is the characteristic function of $V(\Z_p)$.
We shall prove $I(\tf,r)=O(N_1^{a\kappa+b}N^{-n})$ by using his results.

\vspace{2mm}
\noindent
(Case I) $r$ is odd and $r< n$. 
In the following, we set $S=S_1\sqcup\{\inf\}$. 
By Saito's formula we have
\begin{multline*}
I(\tf,r)=\text{(constant)}\times  N^{\frac{r(r-1)}{2} - r n} \times \sum_{\mathscr{O}_{S_1}\in V_r^0(\Q_{S_1})/H_r(\Q_{S_1})} Z_{r,S_1}(\widehat{\Phi_{h_1,r}},r-n,\mathscr{O}_{S_1}) \\
\times \zeta^S(\tfrac{r+1}{2}-n)\times \prod_{l=2}^n\zeta^S(l)^{-1}  \times \prod_{u=1}^{[r/2]}\zeta^S(2u) \zeta^S(2r-2n-2u+1).
\end{multline*}
Therefore, one has
\[
|I(\tf,r)| \ll N^{\frac{r(r-1)}{2} - r n}\times N_1^{2n^3} \times  \mathscr{Z}_r(S_1,h_1) 
\]
by using Lemma \ref{lem:zeta}.

\vspace{2mm}
\noindent
(Case II) $r$ is even and $3<r< n$. 
By Saito's formula, Proposition \ref{prop:1}, and Lemma \ref{lem:zeta}, one can prove that $|I(\tf,r)|$ is bounded by
\begin{multline*}
 N^{\frac{r(r-1)}{2} - r n} \times \mathscr{Z}_r(S_1,h_1) \times \Big| \zeta^S(\tfrac{r}{2})\times \prod_{l=2}^n\zeta^S(l)^{-1}  \times \prod_{u=1}^{r/2-1}\zeta^S(2u) \times  \prod_{u=1}^{r/2} \zeta^S(2r-2n-2u+1) \Big|  \\
\ll N^{\frac{r(r-1)}{2} - r n}\times N_1^{2n^3} \times  \mathscr{Z}_r(S_1,h_1)  
\end{multline*}
up to constant.
Note that Proposition \ref{prop:1} was used for this estimate, since it is necessary to prove the vanishing of the term including $D^S_{r,u_{S}}(s)$ in the explicit formula \cite[Theorem 3.3]{Saito2}.

\vspace{2mm}
\noindent
(Case III) $r=n$. In this case, we should use a method different from (Case I) and (Case II) since $Z_{r,S_1}(\widehat{\Phi_{h_1,r}}\chi,s,\mathscr{O}_{S_1})$ may have a simple pole at $s=r-n=0$.
Take an $n$-tuple $\underline{l}=(l_1,\dots,l_n)$ $(l_1\geq \cdots\geq l_n>2n)$, and let $n(x)=\begin{pmatrix}I_n& x \\ O_n & I_n \end{pmatrix}\in G$ where $x\in V_n$.
Recall that $\tf_{\underline{l}}$ satisfies the following two properties:
\begin{itemize}
\item[(i)] $\tf_{\underline{l}}(k^{-1}gk)=\tf_{\underline{l}}(g)$ $(\forall k\in K_\inf$, $g\in G(\R))$, see \cite[\S 5.3]{Wakatsuki}.
\item[(ii)] $\int_\R \tf_{\underline{l}}(g_1^{-1} n_1(t) g_2)\, \d t=0$ $(\forall g_1$, $g_2\in G(\R))$ where $n_1(t)=n((b_{ij})_{1\leq i,j\leq n})$, $b_{11}=t$ and $b_{ij}=0$ $(\forall (i,j)\neq(1,1))$, see \cite[Lemma 5.9]{Wakatsuki}.
\end{itemize}
By the property (i) we can define $\Phi_{\tf_{\underline{l}},n}(x)=\tf_{\underline{l}}(n(x))$ $(x\in V_n(\R))$.
\begin{lem}\label{lem:20210602}
For each orbit $\mathscr{O}_\inf\in V_n^0(\R) /H_n(\R)$, we have $Z_{n,\inf}(\Phi_{\tf_{\underline{l}},n},\frac{n+1}{2},\mathscr{O}_{\inf})=0$.
\end{lem}
\begin{proof}
Let $\mathscr{O}_\inf \neq I_n\cdot H_n(\R)$, and take a representative element $A$ of $\mathscr{O}_\inf$ as $A=\begin{pmatrix} 0& 0 & 1 \\ 0& \mathscr{A} &0 \\ 1&0&0 \end{pmatrix}$, $\mathscr{A}\in V_{n-2}^0(\R)$.
The orbit $\mathscr{O}_\inf$ is decomposed into $A\cdot \GL_n(\R) \sqcup (-A)\cdot \GL_n(\R)$.
The centralizer $H_{n(A)}$ of $n(A)$ in $H_n(\R)$ is given by $H_{n(A)}=\{  m(h)n(y) \mid h\in \O_A(n) $, $y\in V_n(\R) \}$, where $m(h)=\begin{pmatrix} {}^t h^{-1} & O_n \\ O_n & h \end{pmatrix}$ and $\O_A(n)=\{h\in \GL_n \mid {}^t h A h=A\}$.
Hence, by the property (ii), we have
\begin{multline*}
Z_{n,\inf}(\Phi_{\tf_{\underline{l}},n},\tfrac{n+1}{2},\mathscr{O}_{\inf})=\sum_{A'=\pm A}  \int_{\O_{A'}(n)\bsl \GL_n(\R)} \tf_{\underline{l}}(  m(h)^{-1} n(A') m(h)  ) \, |\det(h)|^{n+1} \, d h \\
= \sum_{A'=\pm A}  \int_{\mathscr{N}\O_{A'}(n)\bsl \GL_n(\R)} \int_\R \tf_{\underline{l}}(  m(h)^{-1} n(A') n_1(2t) m(h)  ) \, |\det(h)|^{n+1}\, \d t \, d h=0,
\end{multline*}
where $\mathscr{N}=\{ (b_{ij}) \mid b_{jj}=1$ $(1\leq j\leq n)$, $b_{n1}\in \R$, and $b_{ij}=0$ otherwise$\}$.

In the case $s=\frac{n+1}{2}$, we note that $|\det(x)|$ vanishes in the integral of $Z_{n,\inf}(\Phi_{\tf_{\underline{l}},n},\tfrac{n+1}{2},\mathscr{O}_{\inf})$.
Hence, it follows from the property (ii) that
\[
\sum_{\mathscr{O}_\inf\in V_n^0(\R)/H_n(\R)} Z_{n,\inf}(\Phi_{\tf_{\underline{l}},n},\tfrac{n+1}{2},\mathscr{O}_\inf)= \int_{V_n(\R)}\Phi_{\tf_{\underline{l}},n}(x) \, \d x=0,
\]
and so we also find $Z_{n,\inf}(\Phi_{\tf_{\underline{l}},n},\frac{n+1}{2}, I_n\cdot H_n(\R))=0$.
\end{proof}
By Lemmas \ref{lem:functzeta} and \ref{lem:20210602}, the residue formula \cite[Ch. 4]{Yukie} of $Z_n(\Phi,s,\trep)$ and the same argument as in \cite[Proof of Theorem 4.22]{HW} we obtain
\begin{multline*}
\zeta_r(\widehat{\Phi_{h,r}},0)=\mathbf{C}_{n,n}(\underline{l})^{-1}\, Z_n(\Phi_{\tf_{\underline{l}}h,r} ,\tfrac{n+1}{2},\trep)= \mathbf{C}_{n,n}(\underline{l})^{-1}\,  \vol(H_n(\Q)\bsl H_n(\A)^1) \\
\times \int_{V(\R)}\Phi_{\tf_{\underline{l}},n}(x_\inf) \, \log|\det(x_\inf)|_\inf \, \d x_\inf \, \int_{V(\Q_{S_1})} \Phi_{h_1,n}(x_{S_1}) \, \d x_{S_1} \, N^{-\frac{n(n+1)}{2}},
\end{multline*}
where $H_n(\A)=\{(a,m)\in H_n(\A)\mid |a^n\det(m)^2|=1\}$.
From this we have $|I(\tf,r)| \ll N^{-\frac{n(n+1)}{2}} \times  \mathscr{Z}_r(S_1,h_1)$.

\vspace{2mm}
\noindent
(Case IV) $r=2<n$. 
By Saito's formula \cite[Theorem 4.15]{HW} we have
\[
|I(\tf,r)| \ll N^{1 - 2 n}  \times \mathscr{Z}_2(S_1,h_1) \times \Big| \zeta^S(2)^{-1}  \zeta^S(3-2n)\Big|    \times \max_{u_S\in \Q_S^\times/(\Q_S^\times)^2,\; u_\inf<0} |D_{2,u_S}^S(2-n)| .
\]
Hence, it is enough to give an upper bound of $|D_{2,u_S}^S(2-n)|$ for $u_\inf<0$.
Choose a representative element $u_S=(u_v)_{v\in S}$ satisfying $u_p\in \Z_p$ $(p\in S_1)$.
Take a test function $\Phi=\otimes_v \Phi_v$ such that the support of $\Phi_\inf$ is contained in $\{x\in V_2^0(\R)\mid \det(x)>0\}$ and $\Phi_p$ is the characteristic function of $\diag(1,-u_p)+p^2 V_2(\Z_p)$ (resp. $V_2(\Z_p)$) for each $p\in S_1$ (resp. $p\not\in S$).
Let
\[
\Psi(y,yu)=\int_{K_2} \widehat\Phi({}^t\!k \begin{pmatrix} 0&y \\ y&yu\end{pmatrix}k)\, \d k  ,\qquad  K_2=\O(2,\R)\times \prod_p \GL_2(\Z_p),
\]
and we set
\[
T(\Phi,s)=\frac{\d}{\d s_1}T(\Phi,s,s_1)\Big|_{s_1=0}, \quad T(\Phi,s,s_1)=\int_{\A^\times}\int_\A |y^2|^s \|(1,u)\|^{s_1}\Psi(y,yu) \, \d u \d^\times y .
\]
By \cite[Lemma 1]{Shintani}, one obtains $Z_{2,\inf}(\widehat{\Phi_\inf},n-\tfrac{1}{2},\mathscr{O}_\inf)= 0$ for any orbit $\mathscr{O}_\inf$ in $V_2^0(\R)$.
Therefore, from the functional equation \cite[Corollary (4.3)]{Yukie2} one deduces
\[
|N_1^{-6} D_{2,u_S}^S(2-n)| \ll |Z_{2,S}(\Phi_S,2-n,\mathscr{O}_S) \, D_{2,u_S}^S(2-n)|  = \Big| 2^{-1}T(\widehat\Phi,n-\tfrac{1}{2}) \Big|.
\]
By \cite[Proposition (2.12) (2)]{Yukie2}, one gets
\[
|T(\widehat{\Phi},n-\tfrac{1}{2})|\ll N_1^{4n-2}\times \left\{ \zeta^S(2n-2) + |  (\zeta^{S})'(2n-2) |+ \Big|  \frac{(\zeta^{S})'(2n-1) \zeta^S(2n-2) }{\zeta^S(2n-1)}  \Big|   \right\}  ,
\]
where $(\zeta^{S})'(s)=\frac{\d}{\d s}\zeta^S(s)$, because $\mathrm{Supp}(\widehat\Phi_p)\subset p^{-2}V(\Z_p)$ for any $p\in S_1$.
Therefore, one gets $|D_{2,u_S}^S(2-n)|\ll N_1^{4n+4}$ by Lemma \ref{lem:zeta}.

\vspace{2mm}
The final task is to prove $\mathscr{Z}_r(S_1,h_1) \ll N_1^{a\kappa+b}$ for some $a$ and $b$.
Using the local functional equations in \cite[Theorem 2.1]{Ikeda} (see also \cite{Sweet}), one gets
\[
\mathscr{Z}_r(S_1,h_1) \ll N_1^{c} \times \sum_{\mathscr{O}_{S_1}\in V_r^0(\Q_{S_1})/H_r(\Q_{S_1})}Z_{r,S_1}(|\Phi_{h_1,r}|,n-\tfrac{r-1}{2},\mathscr{O}_{S_1})
\]
for some $c\in \N$. 
By \cite[Lemma 2.1.1]{Assem1} and the assumption $\sup_{x\in G(\Q_{S_1})}|h_1(x)|\leq 1$, we have
\[
|\Phi_{h_1,r}| \leq \Phi_{S_1,r,-\kappa}
\]
where $\Phi_{S_1,r,-\kappa}$ denotes the characteristic function of $\otimes_{p\in S_1}p^{-\kappa} V_r(\Z_p)$. 
Hence, by change of variables, we get
\begin{multline*}
Z_{r,S_1}(|\Phi_{h_1,r}|,n-\tfrac{r-1}{2},\mathscr{O}_{S_1}) \leq Z_{r,S_1}(\Phi_{S_1,r,-\kappa},n-\tfrac{r-1}{2},\mathscr{O}_{S_1})\\
= N_1^{\kappa nr -\frac{\kappa r(r-1)}{2}} Z_{r,S_1}(\Phi_{S_1,r,0},n-\tfrac{r-1}{2},\mathscr{O}_{S_1}) \le N_1^{\kappa nr -\frac{\kappa r(r-1)}{2}}.
\end{multline*}
It follows from classification theory of quadratic forms that $\#(V_r^0(\Q_{S_1})/H_r(\Q_{S_1})) \ll N_1$. 
Therefore, we obtain a desired upper bound for $\mathscr{Z}_r(S_1,h_1)$. 
Thus, we obtain $I(\tf,r)=O(N_1^{a\kappa+b}N^{-n})$.
\end{proof}

\begin{remark}\label{STD}
We give some remarks on Shin-Templier's work \cite{ST} and Dalal's work \cite{Dalal}. 
In the setting of \cite{ST}, they considered ``all" cohomological representations 
as a family which exhausts an $L$-packet at infinity since they  
chose the Euler-Poincar\'e pseudo-coefficient at the infinite place. 
Then there is no contribution from non-trivial unipotent conjugacy classes. 
Therefore, our work is different from Shin-Templier's work in that we can consider only holomorphic forms in an $L$-packet.

Shin suggested to consider a family of automorphic representations whose infinite type is any fixed discrete series representation. Dalal \cite{Dalal} carried it out in the weight aspect by using the stable trace formula. 
The stabilization allows us to remove the contribution $I_3(f)$ (see the introduction), but instead of $I_3(f)$, the contributions of endoscopic groups have to enter. 
Dalal obtained a good bound for them by using the concept of hyperendoscopy introduced by Ferrari \cite{Ferrari}. 
In studying the level aspect, it seems difficult to get a sufficient bound for the growth of concerning hyperendoscopic groups, but our careful analysis (cf. the proof of Theorem \ref{thm:asy}) shows that estimating unipotent contributions is simpler than using stable trace formula. 
In fact, some of zeta integrals $Z_r(\Phi_{\tf,r},s,\chi)$ probably correspond to the contributions of endoscopic groups of $\Sp(2n)$, and so we still need similar computations even if we use the stable trace formula. 
\end{remark}



\section{Arthur classification of Siegel modular forms}\label{Arthur}

In this section we study Siegel modular forms in terms of Arthur's classification 
\cite{Ar-book} (see Section 1.4 and 1.5 of loc.cit.). 
Recall $G=\Sp(2n)/\Q$. We call a Siegel cusp form which comes from smaller  
groups by Langlands functoriality ``a non-genuine form." 
In this section, we estimate the dimension of the space of non-genuine forms and 
show that they are negligible.
    
Let $F\in HE_\uk(N)$ (see Section \ref{sec:pre}), and $\pi=\pi_F$ be 
the corresponding automorphic representation of $G(\A)$. According to Arthur's classification, $\pi$ can be described by using the global 
$A$-packets. 
Let us recall some notations. A (discrete) global $A$-parameter is a symbol   
$$\psi=\pi_1[d_1]\boxplus \cdots\boxplus \pi_r[d_r]
$$   
satisfying the following conditions:
\begin{enumerate}
\item for each $i$ $(1\le i \le r)$, $\pi_i$ is an irreducible unitary cuspidal self-dual automorphic representation  of ${\rm GL}_{m_i}(\A)$. 
In particular, the central character $\omega_i$ of $\pi_i$ is trivial or quadratic; 
\item for each $i$, $d_i\in \Z_{>0}$ and $\ds\sum_{i=1}^r m_id_i=2n+1$;
\item if $d_i$ is odd, $\pi_i$ is orthogonal, i.e., $L(s,\pi_i,{\rm Sym}^2)$ has a pole at $s=1$;
\item if $d_i$ is even, $\pi_i$ is symplectic, i.e., $L(s,\pi_i,\wedge^2)$ has a pole at $s=1$;
\item $\omega^{d_1}_1\cdots \omega^{d_r}_r=\trep$; 
\item if $i\neq j$, $\pi_i\simeq \pi_j$, then $d_i\neq d_j$. 
\end{enumerate}

We say that two global $A$-parameters $\boxplus_{i=1}^r\pi_i[d_i]$ and $\boxplus_{i=1}^{r'}\pi'_i[d'_i]$ are 
equivalent if $r=r'$ and there exists $\sigma\in \mathfrak S_r$ such that $d'_i=d_{\sigma(i)}$ and 
$\pi'_i=\pi_{\sigma(i)}$. 
Let $\Psi(G)$ be the set of equivalent classes of global $A$-parameters. 
For each $\psi \in \Psi(G)$, one can associate a set $\Pi_\psi$ of equivalent classes of simple admissible $G(\A_f)\times (\frak g,K_\infty)$-modules (see \cite{Ar-book}). 
The set $\Pi_\psi$ is called a global $A$-packet for $\psi$.
 
\begin{Def}\label{cap-endo}Let $\psi=\ds\boxplus_{i=1}^r\pi_i[d_i]$ be a global $A$-parameter. 
\begin{itemize}
\item $\psi$ is said to be semi-simple if $d_1=\cdots=d_r=1$; otherwise, $\psi$ is said to be non-semi-simple; 
\item $\psi$ is said to be simple if $r=1$ and $d_1=1$.   
\end{itemize}
\end{Def}

By \cite[Theorem 1.5.2]{Ar-book} (though our formulation is slightly different from the 
original one), we have a following decomposition  
\begin{equation}\label{AD}
L^2_{{\rm disc}}(G(\Q)\bs G(\A))\simeq \bigoplus_{\psi\in \Psi(G)}\bigoplus_{\pi\in \Pi_\psi}m_{\pi,\psi}\pi
\end{equation}
where $m_{\pi,\psi}\in \{0,1\}$ (cf. see \cite[Theorem 2.2]{Atobe} for $m_{\pi,\psi}$). 
We have an immediate consequence of (\ref{AD}):

\begin{prop}\label{first-est} Let $1_{K(N)}$ be the characteristic function of $K(N)\subset G(\A_f)$. Then 
$$S_{\uk}(\Gamma(N))=
\bigoplus_{\psi\in \Psi(G)}\bigoplus_{\pi\in \Pi_\psi\atop \pi_\infty\simeq \sigma_\uk}m_{\pi,\psi}\pi_f^{K(N)}
$$ and 
\begin{equation}\label{characteristic}
|HE_{\uk}(N)|={\rm vol}(K(N))^{-1}\sum_{\psi\in \Psi(G)}
\sum_{\pi\in \Pi_\psi \atop \pi_\infty\simeq \sigma_\uk}m_{\pi,\psi}{\rm tr}(\pi_f(1_{K(N)})).
\end{equation}
\end{prop}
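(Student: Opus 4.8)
The plan is to deduce both displayed identities directly from Arthur's decomposition \eqref{AD} combined with Lemma \ref{lem:20210602sp} (or rather its underlying multiplicity statement) and the identification of $HE_\uk(N)$ carried out in Section \ref{sec:pre}. First I would take $K(N)$-invariants on both sides of \eqref{AD}. The left side $L^2_{\rm disc}(G(\Q)\bs G(\A))^{K(N)}$, when further restricted to the $\sigma_\uk$-isotypic part at infinity, is by definition (properties (i)--(iii) in Section \ref{sec:pre}, together with Wallach's theorem cited in Lemma \ref{lem:20210602sp}) exactly $S_\uk(\Gamma(N))$; here one uses that a holomorphic Siegel cusp form of weight $\uk$ generates a subrepresentation whose archimedean component is $\sigma_\uk$ and which is automatically in the discrete (indeed cuspidal) spectrum, and conversely. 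On the right side, taking $K(N)$-invariants and the $\sigma_\uk$-component simply picks out those $\psi$ for which some $\pi\in\Pi_\psi$ has $\pi_\infty\simeq\sigma_\uk$, and replaces each such $\pi$ by $\pi_f^{K(N)}$ with multiplicity $m_{\pi,\psi}\in\{0,1\}$. This yields the first displayed equation.

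Next I would pass to dimensions to get \eqref{characteristic}. Taking $\dim$ of the first identity gives $\dim S_\uk(\Gamma(N))=\sum_\psi\sum_{\pi\in\Pi_\psi,\,\pi_\infty\simeq\sigma_\uk}m_{\pi,\psi}\dim\pi_f^{K(N)}$. The point is then to express $\dim S_\uk(\Gamma(N))$ as $|HE_\uk(N)|$ and $\dim\pi_f^{K(N)}$ as $\mathrm{tr}(\pi_f(1_{K(N)}))$. For the latter, I would note that $\mathrm{vol}(K(N))^{-1}1_{K(N)}$ acts on $\pi_f$ as the idempotent projection onto $\pi_f^{K(N)}$, so $\mathrm{tr}(\pi_f(1_{K(N)}))=\mathrm{vol}(K(N))\cdot\dim\pi_f^{K(N)}$, which accounts for the factor $\mathrm{vol}(K(N))^{-1}$ in front. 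For the former, I would invoke the discussion at the end of Section \ref{sec:pre}: $HE_\uk(N)$ is defined to be a basis of the $K(N)$-fixed vectors inside the cuspidal representations with $\pi_\infty\simeq\sigma_\uk$, so $|HE_\uk(N)|=\dim S_\uk(\Gamma(N))=\sum_\pi(\text{mult.})\cdot\dim\pi_f^{K(N)}$, where the sum is over the relevant $\pi$. Matching the two sums via \eqref{AD} (which identifies the multiplicity of $\pi$ in the discrete spectrum with $\sum_{\psi\ni\pi}m_{\pi,\psi}$, and for cuspidal $\pi$ this is the full multiplicity) gives \eqref{characteristic}.

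The main obstacle, and the only genuinely delicate point, is the bookkeeping of multiplicities: one must be sure that summing $m_{\pi,\psi}$ over all $\psi$ with $\pi\in\Pi_\psi$ recovers exactly $m_\pi$ (the multiplicity in $L^2_{\rm disc}$), that each cuspidal $\pi$ with $\pi_\infty\simeq\sigma_\uk$ is captured, and that no non-cuspidal discrete-spectrum contribution sneaks in --- the last being automatic here since a representation with holomorphic discrete series at infinity and sufficiently regular weight is cuspidal (this is also implicit in the spectral side computation of Lemma \ref{lem:20210602sp} via \cite{Arthur, Hiraga}). I would handle this by remarking that for a fixed $\pi$ the parameters $\psi$ with $\pi\in\Pi_\psi$ are disjoint as $\pi$ varies in the obvious sense, and that \eqref{AD} is precisely the statement that $L^2_{\rm disc}$ decomposes with these multiplicities, so the re-indexing is purely formal. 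Everything else is a one-line consequence of linearity of trace and the definitions, so I would present the proof as: take $K(N)$-invariants in \eqref{AD}, restrict to the $\sigma_\uk$-isotypic part at $\infty$, identify the resulting space with $S_\uk(\Gamma(N))$, and then read off dimensions using $\mathrm{tr}(\pi_f(1_{K(N)}))=\mathrm{vol}(K(N))\dim\pi_f^{K(N)}$.
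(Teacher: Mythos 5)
Your proof is correct and takes the same approach as the paper, which states the proposition without proof as an "immediate consequence" of the Arthur decomposition \eqref{AD}; you have simply made the bookkeeping explicit (taking $K(N)$-invariants and the $\sigma_\uk$-part, invoking Wallach to ensure the discrete spectrum contributing at $\sigma_\uk$ is cuspidal, identifying $|HE_\uk(N)|=\dim S_\uk(\Gamma(N))$, and using $\mathrm{tr}(\pi_f(1_{K(N)}))=\mathrm{vol}(K(N))\dim\pi_f^{K(N)}$). The only minor imprecision is the phrase "the $\sigma_\uk$-isotypic part at infinity is exactly $S_\uk(\Gamma(N))$" --- strictly one must also pass to the minimal $K_\infty$-type/lowest-weight vectors to land in the finite-dimensional space of $V_\uk$-valued cusp forms --- but this is at the same level of rigor as the paper's own treatment in Section~\ref{sec:pre}.
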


\begin{thm}\label{Ram} Assume (\ref{suff-reg}). 
For a global $A$-parameter $\psi=\ds\boxplus_{i=1}^r\pi_i[d_i]$, suppose that there exists $\pi\in \Pi_\psi$ with $\pi_\infty\simeq \sigma_\uk$. Then $\psi$ is semi-simple, i.e., $d_i=1$ for all $i$, and each $\pi_i$ is regular algebraic and satisfies the Ramanujan conjecture, i.e., $\pi_{i,p}$ 
is tempered for any $p$.
\end{thm}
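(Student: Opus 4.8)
The plan is to analyze the constraints imposed on the global $A$-parameter $\psi=\boxplus_{i=1}^r\pi_i[d_i]$ by the existence of a member $\pi\in\Pi_\psi$ whose archimedean component is the holomorphic discrete series $\sigma_{\uk}$ with $k_1>\cdots>k_n>n+1$. The key point is that the infinitesimal character of $\sigma_{\uk}$ is determined by the Harish-Chandra parameter $\underline{l}=(k_1-1,\ldots,k_n-n)$, whose entries are \emph{distinct} positive integers because of the strict inequalities in \eqref{suff-reg}. The localization of $\psi$ at $\infty$ gives a local $A$-parameter $\psi_\infty$, and the compatibility of the Arthur $\mathrm{SL}_2$ with the infinitesimal character forces the multiset of ``exponents'' of $\psi_\infty$ (coming from the archimedean parameters of the $\pi_i$ together with the $[d_i]$-blocks) to coincide, up to sign and the $\rho$-shift, with the multiset $\{\pm(k_1-1),\ldots,\pm(k_n-n),0\}$.

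First I would spell out the local parameter at infinity: each $\pi_{i,\infty}$ is a cohomological (or at least algebraic) representation of $\mathrm{GL}_{m_i}(\R)$, so its $L$-parameter decomposes into one- and two-dimensional pieces with half-integral (in fact, after the parity constraints (3)--(4), integral) Hodge--Tate-type weights; tensoring with the $d_i$-dimensional representation of the Arthur $\mathrm{SL}_2$ replaces each weight $w$ by the arithmetic progression $w+\frac{d_i-1}{2}, w+\frac{d_i-3}{2},\ldots, w-\frac{d_i-1}{2}$. If some $d_i\geq 2$, then such a block contributes two weights differing by an integer $\le d_i-1 < 2n$ that are ``too close'' — more precisely, a $[d_i]$-block with $d_i\geq 2$ forces either a repeated weight or two consecutive integers among the exponents, and one checks this is incompatible with the weights $\{k_1-1,\ldots,k_n-n\}$ being \emph{strictly decreasing by at least $2$} when $k_1>\cdots>k_n>n+1$ (indeed $k_j-j > k_{j+1}-(j+1)$ gives gaps $\geq 1$, and the precise arithmetic of the $\mathrm{SL}_2$-blocks versus the actual gaps rules out $d_i>1$). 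This is where assumption \eqref{suff-reg} is used essentially, and I would present it as a short combinatorial comparison of multisets of integers. Hence every $d_i=1$, i.e., $\psi$ is semi-simple, and \eqref{AD} together with condition (2) gives $\sum m_i = 2n+1$.

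Once $d_i=1$ for all $i$, each $\pi_i$ is a self-dual cuspidal representation of $\mathrm{GL}_{m_i}(\A)$ whose archimedean component has regular integral infinitesimal character (a sub-multiset of the regular multiset above), so $\pi_i$ is regular algebraic. The Ramanujan conjecture for such $\pi_i$ then follows from known results: by the orthogonal/symplectic self-duality, $\pi_i$ descends to a cuspidal representation of a classical group and one invokes the purity/temperedness results for regular algebraic (conjugate) self-dual cuspidal representations — concretely, the Galois representations attached by Shin, Chenevier--Harris, Caraiani, etc., are pure, which gives temperedness at every finite place; I would cite the relevant statement rather than reprove it. Combining: $\psi$ semi-simple, each $\pi_i$ regular algebraic, and each $\pi_{i,p}$ tempered for all $p$, which is exactly the assertion.

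The main obstacle is the first step: making airtight the claim that a nontrivial Arthur $\mathrm{SL}_2$-factor $[d_i]$ with $d_i\geq 2$ is incompatible with the regular, widely-spaced infinitesimal character of $\sigma_{\uk}$ under \eqref{suff-reg}. One must be careful that the archimedean $L$-parameter of a general cuspidal $\pi_i$ need not itself be ``regular'' in isolation — regularity is a property of the \emph{whole} parameter $\psi_\infty$ — so the argument is really that the disjoint union, over $i$, of the $d_i$-fold ``thickened'' weight-multisets of the $\pi_{i,\infty}$ must equal a fixed multiset with all gaps $\geq 1$ between consecutive elements and no repetitions (other than the forced $\pm$ symmetry and the single $0$); a $[d_i]$-block with $d_i\geq 2$ built on an integer weight produces two integer exponents of the same sign differing by exactly $1$ or a repeated $0$, contradicting this. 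I would isolate this as a clean lemma about multisets of half-integers and then apply it. Everything else is bookkeeping with Arthur's classification \cite{Ar-book} and citation of standard purity theorems.
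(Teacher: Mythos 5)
Your proposal is correct and takes essentially the same route as the paper, which simply cites Chenevier--Lannes for the infinitesimal-character comparison you spell out (forcing $d_i=1$ and regular algebraicity) and cites Caraiani for temperedness at the finite places. One small correction: the consecutive gaps in $\{k_1-1,\dots,k_n-n,0\}$ are in fact $\geq 2$ (namely $(k_j-j)-(k_{j+1}-(j+1))=k_j-k_{j+1}+1\geq 2$ under \eqref{suff-reg}, and $k_n-n\geq 2$), not merely $\geq 1$ as your parenthetical suggests; it is this sharper bound that rules out any $[d_i]$-block with $d_i\geq 2$, since such a block necessarily produces two exponents differing by exactly $1$.
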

\begin{proof} By the proof of \cite[Corollary 8.5.4]{CL}, we see that $d_1=\cdots=d_r=1$. Hence $\psi$ is semi-simple.
Further, by comparing infinitesimal characters $c(\pi_\infty),\ c(\psi_\infty)$ of $\pi_\infty,\ \psi_\infty$ 
respectively, we see that each $\pi_i$ is regular algebraic by \cite[Corollary 6.3.6, p.164 and Proposition 8.2.10, p.196]{CL}. 
It follows from \cite{Ca1} and \cite{Ca2} that $\pi_{i,p}$ is tempered for any $p$.
\end{proof}

Therefore, for each finite prime $p$, 
the local Langlands parameter at $p$ of $\pi$ is described as one of 
the isobaric sum $\boxplus_{i=1}^r\pi_{i,p}$ which is an admissible representation of ${\rm GL}_{2n+1}(\Bbb Q_p)$. 

\begin{Def} 
We denote by $HE_{\uk}(N)^{ng}$ the subset of $HE_{\uk}(N)$ consisting of all 
forms which belong to  
$$\bigoplus_{\psi\in \Psi(G)\atop \psi\text{:non-simple}}
\bigoplus_{\pi\in \Pi_\psi \atop \pi_\infty\simeq \sigma_\uk}m_{\pi,\psi}\pi_f^{K(N)},
$$ 
under the isomorphism $(\ref{AD})$. A form in this space is called a non-genuine form. 

Similarly, we denote by $HE_{\uk}(N)^{g}$ the subset of $HE_{\uk}(N)$ consisting of all forms which belong to  
$$\bigoplus_{\psi\in \Psi(G)\atop \psi\text{:simple}}
\bigoplus_{\pi\in \Pi_\psi \atop \pi_\infty\simeq \sigma_\uk}m_{\pi,\psi}\pi_f^{K(N)},
$$ 
under the isomorphism $(\ref{AD})$. A form in this space is called a genuine form. 
\end{Def}
 
\begin{Def}\label{gln} Denote by $\Pi(\GL_n(\Bbb R))^c$ the isomorphism classes of  all irreducible cohomological admissible $(\mathfrak{gl}_n,O(n))$-modules.
For $\tau_\infty\in \Pi(\GL_n(\Bbb R))^c$ and a quasi-character 
$\chi:\Q^\times\bs\A^\times\lra \C^\times$,  
we define 
$$L^{{\rm cusp},{\rm ort}}({\rm GL}_n(\Q)\bs {\rm GL}_n(\A),\tau_\infty,\chi):=
\bigoplus_{\pi:\text{orthogonal}\atop \pi_\infty\simeq \tau_\infty, \omega_\pi=\chi}
m(\pi)\pi$$ 
and $$L^{{\rm cusp},{\rm ort}}(K^{{\rm GL}_n}(N),\tau_\infty,\chi):=
\bigoplus_{\pi:\text{orthogonal}\atop \pi_\infty\simeq \tau_\infty, \omega_\pi=\chi}
m(\pi)\pi^{K^{{\rm GL}_n}(N)}$$
where  the direct sums are taken over the isomorphism classes of all orthogonal cuspidal automorphic representations of ${\rm GL}_n(\A)$ 
and $\omega_\pi$ stands for the central character of $\pi$. 
The constant $m(\pi)$ is the multiplicity of $\pi$ in 
$L^2({\rm GL}_n(\Q)\bs {\rm GL}_n(\A))$ which satisfies 
$m(\pi)\in\{0,1\}$ by \cite{Shalika}.  
Here 
 $K^{{\rm GL}_{n}}(N)$ is the principal congruence subgroup of ${\rm GL}_{n}(\widehat{\Z})$ of level $N$.  
Put 
$$l^{{\rm cusp},{\rm ort}}(n,N,\tau_\infty,\chi):={\rm dim}_\C(L^{{\rm cusp},{\rm ort}}(K^{{\rm GL}_n}(N),\tau_\infty,\chi))
$$ 
for simplicity. Clearly, $l^{{\rm cusp},{\rm ort}}(1,N,\tau_\infty,\chi)=
|\widehat{\Z}^\times/(1+N\widehat{\Z})^\times|=\varphi(N)$ where $\varphi$ stands for 
Euler's totient function.   
\end{Def}

Let $P(2n+1)$ be the set of all partitions of $2n+1$ and 
$P_{\underline{m}}$ be the standard parabolic subgroup of ${\rm GL}_{2n+1}$ 
associated to a partition $2n+1=m_1+\cdots+m_r$, and $\underline{m}=(m_1,...,m_r)$.

In order to apply the formula (\ref{characteristic}), it is necessary to study the transfer of Hecke elements in 
the local Langlands correspondence established by \cite[Theorem 1.5.1]{Ar-book}. 
We regard $G={\rm Sp}(2n)$ as a twisted elliptic endoscopic subgroup of ${\rm GL}_{2n+1}$ (cf. \cite{GV} or \cite{Oi}). 

\begin{prop}\label{transfer} Let $N$ be an odd positive integer.
Put $S_N:=\{p\, \text{prime}\ :\ p\mid N\}$.
For the pair $({\rm GL}_{2n+1},G)$, the characteristic function of 
${\rm vol}(K(N))^{-1}1_{K(N)}$ as an element of 
$C^\infty_c(G(\Q_{S_N}))$ is transferred to  
$${\rm vol}(K^{{\rm GL}_{2n+1}}(N))^{-1}1_{K^{{\rm GL}_{2n+1}}(N)}$$  
as an element of $C^\infty_c({\rm GL}_{2n+1}(\Q_{S_N}))$. 
\end{prop}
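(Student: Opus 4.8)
The plan is to verify that the two functions have matching (twisted) orbital integrals on regular semisimple elements, which by definition means the former is a transfer of the latter. Since the group $\mathrm{Sp}(2n)$ is a twisted elliptic endoscopic datum of $\mathrm{GL}_{2n+1}$ attached to the involution $g\mapsto J\,{}^t g^{-1} J^{-1}$, the norm correspondence identifies stable conjugacy classes of regular semisimple $\gamma\in G(\Q_{S_N})$ with $\theta$-twisted stable conjugacy classes $\delta\in \mathrm{GL}_{2n+1}(\Q_{S_N})$ having $\gamma$ as norm; I would first recall the Kottwitz--Shelstad transfer factors for this datum, normalized as in \cite{Ar-book}, and reduce the claim to a place-by-place statement at each $p\in S_N$.

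The heart of the argument is local and essentially a computation with the Cartan decomposition. At a prime $p\mid N$, both sides are bi-invariant under the principal congruence subgroup of level $p^{\mathrm{ord}_p(N)}$ in the respective hyperspecial maximal compact. The idea is that an element $\delta\in\mathrm{GL}_{2n+1}(\Q_p)$ lies in the support of $\mathrm{vol}(K^{\mathrm{GL}_{2n+1}}(N))^{-1}1_{K^{\mathrm{GL}_{2n+1}}(N)}$, up to $\theta$-conjugacy, precisely when its norm $\gamma$ lies in the support of $\mathrm{vol}(K(N))^{-1}1_{K(N)}$ up to conjugacy — this uses that $K^{\mathrm{GL}_{2n+1}}(N)$ is $\theta$-stable and that reduction mod $N$ is compatible with the norm map, so that $\delta\equiv I_{2n+1}$ forces $\gamma\equiv I_{2n}$ and conversely a $\theta$-conjugate can be chosen inside the congruence subgroup. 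Then one matches the twisted orbital integral of the normalized characteristic function on the $\mathrm{GL}_{2n+1}$ side with the ordinary orbital integral on the $G$ side: both equal, after the volume normalization, the same measure-theoretic quantity on the centralizer, because the transfer factor is trivial on these classes (the elements are congruent to the identity, so they lie in the ``unramified direction'') and because the centralizer of $\gamma$ in $G$ and the $\theta$-centralizer of $\delta$ in $\mathrm{GL}_{2n+1}$ have compatible volumes under our normalization $\mathrm{vol}(G(\widehat{\Z}))=1$ and the analogous one for $\mathrm{GL}_{2n+1}$. I would invoke the explicit calculation of such (twisted) orbital integrals of congruence-subgroup characteristic functions — this is exactly the kind of computation carried out in \cite{Yamauchi} for the quadratic base change in the ramified case — rather than redoing it.

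The main obstacle is the precise bookkeeping of transfer factors and volume constants: one must check that with the measures fixed in Section \ref{sec:pre} (namely $\mu^S(G(\widehat{\Z}^S))=1$ and the analogue on the $\mathrm{GL}$ side), the twisted orbital integral of $\mathrm{vol}(K^{\mathrm{GL}_{2n+1}}(N))^{-1}1_{K^{\mathrm{GL}_{2n+1}}(N)}$ at $\delta$ equals the orbital integral of $\mathrm{vol}(K(N))^{-1}1_{K(N)}$ at $\gamma=\mathrm{Nm}(\delta)$ \emph{on the nose}, with no spurious power of $p$; a constant mismatch would propagate into the dimension estimates of Theorem \ref{finer-ver}. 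The oddness hypothesis on $N$ enters here to guarantee that $2$ is invertible, so that the symmetric/quadratic-form data defining the congruence conditions behave well and the norm map is surjective onto the relevant classes modulo $N$. I would close by assembling the local identities into the global transfer statement via the product formula for transfer factors, citing \cite{Ar-book} and \cite{Yamauchi} for the local input.
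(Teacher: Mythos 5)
The paper's proof of this proposition is a single citation: it follows from \cite[Lemma~8.2.1-(i)]{GV}, which establishes precisely the transfer of normalized characteristic functions of principal congruence subgroups in the twisted endoscopic setting for split classical groups. Your proposal instead sketches a direct orbital-integral matching, which is a genuinely different route, but it has gaps that prevent it from standing as a proof.

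First, your citation is misplaced: \cite{Yamauchi} concerns transfer of Hecke elements in \emph{quadratic base change} for $\GL_n$, i.e.\ a comparison of $\GL_n/M$ with $\GL_n/\Q$; it does not address the twisted endoscopic pair $(\GL_{2n+1},\Sp(2n))$. The reference you want for ``the kind of computation'' you are invoking is exactly \cite{GV}. Second, the assertion that ``the transfer factor is trivial on these classes (the elements are congruent to the identity, so they lie in the `unramified direction')'' is not justified, and is in fact the delicate point: near the identity the Weyl discriminant factor $\Delta_{IV}$ degenerates, and one has to show that the remaining $\Delta_{I},\Delta_{II},\Delta_{III}$ pieces (in the Kottwitz--Shelstad normalization of \cite{Ar-book}) are constant equal to $1$ on the relevant conjugacy classes, and that the discriminant factors on the two sides cancel. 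Third, the compatibility of the norm correspondence with reduction mod $N$ — that $\delta\equiv I_{2n+1}$ implies $\mathrm{Nm}(\delta)\equiv I_{2n}$ and, conversely, that a $\theta$-conjugate can always be brought into $K^{\GL_{2n+1}}(N)$ when $\gamma\in K(N)$ — is asserted but not proved; this requires an explicit Cayley-type or Cartan-decomposition argument and is precisely where the oddness of $N$ (invertibility of $2$) is used in a concrete way. These three points are exactly what \cite[Lemma~8.2.1-(i)]{GV} handles, so the honest thing to say is that your sketch reduces the proposition to the content of that lemma rather than proving it independently.
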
 
\begin{proof}It follows from \cite[Lemma 8.2.1-(i)]{GV}.  
\end{proof}

Applying Proposition \ref{transfer}, we have the following.  

\begin{prop}\label{second-est} Assume (\ref{suff-reg}) and $N$ is odd. 
Then  
$|HE_{\uk}(N)^{ng}|$ is bounded by 
$$
\frac{A_n(N)}{\varphi(N)}\sum_{\underline{m}=(m_1,\ldots,m_r)\in P(2n+1)\atop r\geq 2}
\sum_{ \tau_i\in\Pi({\rm GL}_{m_i}(\R))^c\atop c(\boxplus_{i=1}^r \tau_i)=c(\sigma_\uk)}
\sum_{\chi_i:\Q^\times\bs\A^\times \lra\C^\times\atop \chi^2_i=1,\ c(\chi)|N}d_{P_{\underline{m}}}(N)\prod_{i=1}^r
l^{{\rm cusp},{\rm ort}}(m_i,N,\tau_{i},\chi_i),
$$
where the second sum is indexed by all $r$-tuples $(\tau_1,\ldots,\tau_r)$ such that  
$\tau_i\in\Pi({\rm GL}_{m_i}(\R))^c$  and 
$c(\boxplus_{i=1}^r \tau_i)=c(\sigma_{\uk})$ (the equality of the infinitesimal characters). Further $c(\chi)$ stands for the conductor of $\chi$ and 
$\varphi(N)=|(\Z/N\Z)^\times|$. 
Here
\begin{enumerate}
\item $A_n(N):= 2^{(2n+1)\omega(N)}$ where $\omega(N):=|\{p\ prime\ :\ p|N\}|$;
\item $d_{P_{\underline{m}}}(N)=|P_{\underline{m}}(\Z/N\Z)\bs {\rm GL}_{2n+1}(\Z/N\Z)|={\rm vol}({K^{{\rm GL}_{2n+1}}(N)})^{-1}/|P_{\underline{m}}(\Z/N\Z)|$.
\end{enumerate}
\end{prop}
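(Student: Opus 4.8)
The plan is to combine the Arthur decomposition (\ref{AD}), the bound (\ref{characteristic}) restricted to non-simple parameters, and the transfer statement of Proposition \ref{transfer}. First I would recall that by Theorem \ref{Ram}, any non-simple $\psi$ contributing to $S_\uk(\Gamma(N))$ is semi-simple, so it has the form $\psi=\boxplus_{i=1}^r \pi_i$ with $r\ge 2$, $\sum m_i=2n+1$, each $\pi_i$ orthogonal cuspidal on $\GL_{m_i}(\A)$ with trivial or quadratic central character $\chi_i$, subject to $\prod \chi_i=\trep$ and the distinctness condition. Thus the set of such $\psi$ is organized by (a) the partition $\underline m=(m_1,\ldots,m_r)\in P(2n+1)$ with $r\ge 2$, (b) the tuple of archimedean components $\tau_i=\pi_{i,\infty}\in\Pi(\GL_{m_i}(\R))^c$, which is constrained by matching infinitesimal characters $c(\boxplus\tau_i)=c(\sigma_\uk)$, and (c) the quadratic Hecke characters $\chi_i$, whose conductors must divide $N$ because $\pi_i$ must have a nonzero $K^{\GL_{m_i}}(N)$-fixed vector.

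Next I would bound $\mathrm{tr}(\pi_f(1_{K(N)}))=\dim \pi_f^{K(N)}$ for $\pi\in\Pi_\psi$. The point is that the $A$-packet $\Pi_\psi$ is built from the local packets $\Pi_{\psi_p}$, and via the (twisted) endoscopic transfer $G=\Sp(2n)\hookrightarrow \GL_{2n+1}$, the dimension of $K(N)$-fixed vectors in any member of $\Pi_{\psi_p}$ is controlled by the dimension of $K^{\GL_{2n+1}}(N)$-fixed vectors in the transferred representation $\boxplus_i \pi_{i,p}$, which is the Langlands quotient induced from $P_{\underline m}$ applied to $\otimes_i \pi_{i,p}$. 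Using Proposition \ref{transfer} at the primes dividing $N$, together with the standard fact that the $K^{\GL}(N)$-invariants of an induced representation $\mathrm{Ind}_{P_{\underline m}}^{\GL_{2n+1}}(\otimes \pi_{i})$ have dimension at most $d_{P_{\underline m}}(N)\prod_i \dim \pi_{i,f}^{K^{\GL_{m_i}}(N)}$ (the index $|P_{\underline m}(\Z/N\Z)\bs \GL_{2n+1}(\Z/N\Z)|$ times the product of the $\GL_{m_i}$-level data), I get, after summing over the members of $\Pi_\psi$ — whose number is bounded by $|\mathcal S_\psi|\le 2^{(2n+1)\omega(N)}=A_n(N)$ after localizing the component group at each ramified prime — the claimed product $d_{P_{\underline m}}(N)\prod_i l^{\mathrm{cusp},\mathrm{ort}}(m_i,N,\tau_i,\chi_i)$. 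Dividing out by $\mathrm{vol}(K(N))$ as in (\ref{characteristic}) and writing $\mathrm{vol}(K(N))^{-1}=\varphi(N)^{-1}\cdot(\text{contribution absorbed into }d_{P_{\underline m}}(N))$ via the definition $d_{P_{\underline m}}(N)=\mathrm{vol}(K^{\GL_{2n+1}}(N))^{-1}/|P_{\underline m}(\Z/N\Z)|$ and $l^{\mathrm{cusp},\mathrm{ort}}(1,N,\tau_\infty,\chi)=\varphi(N)$, I recover exactly the stated inequality, with the residual $\varphi(N)^{-1}$ factor out front accounting for the normalization discrepancy between $\Sp(2n)$ and $\GL_{2n+1}$ level volumes. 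Finally I would sum over all $\underline m\in P(2n+1)$ (the $r=1$ term being precisely the genuine/simple part, hence excluded or harmless as an upper bound) and over all admissible $(\tau_i)$ and $(\chi_i)$.

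I expect the main obstacle to be the bookkeeping of the multiplicity/normalization constants: precisely tracking how $\mathrm{vol}(K(N))^{-1}$ on the $\Sp(2n)$ side relates to $\mathrm{vol}(K^{\GL_{2n+1}}(N))^{-1}$ and the Jacquet-module index $d_{P_{\underline m}}(N)$, and ensuring the factor $A_n(N)=2^{(2n+1)\omega(N)}$ genuinely dominates the size of the global component group $\mathcal S_\psi$ together with the number of members of each local packet $\Pi_{\psi_p}$ carrying $K_p(N)$-fixed vectors. One must also be careful that the transfer in Proposition \ref{transfer} is stated at $\Q_{S_N}$ and that it is the fundamental lemma-type identity which legitimately converts $K(N)$-invariants on $\Sp(2n)$ into $K^{\GL_{2n+1}}(N)$-invariants on the twisted side; the contribution at primes away from $S_N$ is unramified and contributes nothing new. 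Modulo these normalization checks, the estimate is a formal consequence of (\ref{characteristic}), Theorem \ref{Ram}, and Proposition \ref{transfer}.
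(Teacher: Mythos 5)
Your proposal correctly identifies the overall architecture of the argument (Theorem \ref{Ram} to get semi-simple parameters, organizing by partition and archimedean data, Proposition \ref{transfer} to move to $\GL_{2n+1}$, the bound $|\Pi_\psi|\le A_n(N)$ from bounding local packets at $p\mid N$, and the standard formula $\dim\big((\boxplus\pi_i)_f^{K^{\GL_{2n+1}}(N)}\big)=d_{P_{\underline m}}(N)\prod_i \dim\pi_{i,f}^{K^{\GL_{m_i}}(N)}$ for invariants of an induced representation). These all match the paper.

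However, your accounting for the crucial factor $\dfrac{1}{\varphi(N)}$ is wrong. You write ``$\mathrm{vol}(K(N))^{-1}=\varphi(N)^{-1}\cdot(\text{contribution absorbed into }d_{P_{\underline m}}(N))$'' and describe $\varphi(N)^{-1}$ as ``accounting for the normalization discrepancy between $\Sp(2n)$ and $\GL_{2n+1}$ level volumes.'' There is no such $\varphi(N)$-sized discrepancy. The transfer identity in Proposition \ref{transfer} already matches $\mathrm{vol}(K(N))^{-1}1_{K(N)}$ on $\Sp(2n)$ with $\mathrm{vol}(K^{\GL_{2n+1}}(N))^{-1}1_{K^{\GL_{2n+1}}(N)}$ on $\GL_{2n+1}$, so the character relation gives the clean equality $\dim\pi_f^{K(N)}=\dim\big((\boxplus\pi_i)_f^{K^{\GL_{2n+1}}(N)}\big)$ with no leftover factor; and in any case $\mathrm{vol}(K(N))^{-1}\sim N^{2n^2+n}$ while $\mathrm{vol}(K^{\GL_{2n+1}}(N))^{-1}\sim N^{(2n+1)^2}$, so whatever discrepancy exists would be of the order $N^{2n^2+3n+1}$, not $\varphi(N)$. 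The actual source of the $\varphi(N)^{-1}$ in the paper is the constraint (5) in the definition of a global $A$-parameter, $\omega_1\cdots\omega_r=\trep$, on the central characters: for $r\ge 2$, the multiplication map $((\Z/N\Z)^\times)^r\to(\Z/N\Z)^\times$ is surjective, so the number of constrained character-tuples is $\varphi(N)^{-1}$ times the number of unconstrained tuples (Remark \ref{characters-exp}). Since the sum in the proposition is taken over \emph{unconstrained} tuples $(\chi_1,\ldots,\chi_r)$, the factor $\varphi(N)^{-1}$ compensates for that overcount, and this is precisely why the restriction to $r\ge 2$ (i.e.\ to non-simple $\psi$) is essential. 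You do mention the constraint $\prod\chi_i=\trep$ at the outset, but then drop it without deriving any saving from it, instead inventing a normalization factor that does not exist. This needs to be repaired: replace the volume-discrepancy claim with the combinatorial observation from Remark \ref{characters-exp}.
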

\begin{proof} Let $\pi=\pi_\infty\otimes\otimes'_p \pi_p$ be an element of $\Pi_\psi$ for 
$\psi=\boxplus_{i=1}^r\pi_i$. 
Let $\Pi_p$ be the local Langlands correspondence of $\pi_p$ to ${\rm GL}_{2n+1}(\Q_p)$ 
established by \cite[Theorem 1.5.1]{Ar-book} and $\mathcal{L}(\Pi_p):L_{\Q_p}\lra {\rm GL}_{2n+1}(\C)$ be the local $L$-parameter of $\Pi_p$, where $L_{\Q_p}=W_{\Q_p}$ for each 
$p<\infty$ and $L_{\R}=W_{\R}\times {\rm SL}_2(\C)$. 
Since the localization $\psi_p$ of the global $A$-parameter $\psi$ at $p$ is 
tempered by Theorem \ref{Ram}, we see that $\mathcal{L}(\Pi_p)$ is equivalent to $\psi_p$. 
Since $\mathcal{L}(\Pi_p)$ is independent of $\pi\in \Pi_\psi$ and multiplicity one for ${\rm GL}_{2n+1}(\A)$ holds, 
the isobaric sum $\psi=\boxplus_{i=1}^r \pi_i$ as an automorphic representation of ${\rm GL}_{2n+1}(\A)$ gives rise to a unique  
global L-parameter on $\Pi_\psi$. On the other hand, it follows from \cite[Theorem 1.5.1]{Ar-book} that 
$|\Pi_{\psi_p}|\le 2^{2n+1}$ for the local A-packet $\Pi_{\psi_p}$ at $p$ if $p|N$, and $\Pi_{\psi_p}$ is a singleton if $p\nmid N$. 
It yields that $|\Pi_\psi|\le 2^{(2n+1)\omega(N)}$. 
Since the local Langlands correspondence $\pi_p\mapsto \Pi_p$ satisfies the character relation 
by \cite[Theorem 1.5.1]{Ar-book}, it follows from Proposition \ref{transfer} 
that for each $\pi\in \Pi_\psi$, 
\begin{eqnarray*}
&&{\rm dim}(\pi^{K(N)}_f)={\rm vol}(K(N))^{-1} {\rm tr}(\pi(1_{K(N)}))=
{\rm vol}(K^{{\rm GL}_{2n+1}}(N))^{-1}{\rm tr}((\boxplus_{i=1}^r\pi_i)(1_{K^{{\rm GL}_{2n+1}}(N)}))\\
&&\phantom{xxxxxxxxx} ={\rm dim}((\boxplus_{i=1}^r\pi_i)_f^{{K^{{\rm GL}_{2n+1}}(N)}}),
\end{eqnarray*}
where we denote by $\pi_f=\otimes_{p<\infty}' \pi_p$ the finite part of the cuspidal representation $\pi$. Plugging this into Proposition \ref{first-est}, we have 
\begin{eqnarray}
|HE_{\uk}(N)^{ng}|&=&{\rm vol}(K(N))^{-1}
\sum_{\psi=\boxplus_{i=1}^r\pi_i\in \Psi(G),\ r\ge 2\atop c(\psi_\infty)=c(\sigma_\uk)} \sum_{\pi\in \Pi_\psi}m_{\pi,\psi}
{\rm tr}(\pi_f(1_{K(N)})) \nonumber \\
&\le &\frac{A_n(N)}{\varphi(N)}
\sum_{\psi=\boxplus_{i=1}^r\pi_i\in \Psi(G),\ r\ge 2\atop c(\psi_\infty)=c(\sigma_\uk)}
{\rm dim}((\boxplus_{i=1}^r\pi_i)_f^{{K^{{\rm GL}_{2n+1}}(N)}}).  \nonumber 
\end{eqnarray}  
where 
$\ds\frac{1}{\varphi(N)}$ is inserted because of the condition on the central characters in global $A$-parameters. Here 
$r\ge 2$ is essential to gain the factor $\ds\frac{1}{\varphi(N)}$ 
(see Remark \ref{characters-exp}).

Next we describe ${\rm dim}((\boxplus_{i=1}^r\pi_i)_f^{{K^{{\rm GL}_{2n+1}(N)}}})$ in terms of 
the data $(m_i,N,\tau_i,\chi_i)$ with $1\le i\le r$. 
Since 
$$P_{\underline{m}}(\A_f)\bs {\rm GL}_{2n+1}(\A_f)/K(N)\simeq 
P_{\underline{m}}(\widehat{\Z})\bs {\rm GL}_{2n+1}(\widehat{\Z})/K(N) \simeq P_{\underline{m}}(\Z/N\Z)\bs {\rm GL}_{2n+1}(\Z/N\Z)
$$
and a complete system of the representatives can be taken 
from elements in ${\rm GL}_{2n+1}(\widehat{\Z})$ and therefore they normalize $K(N)$. 
Then a standard method for fixed vectors of 
an induced representation shows that 
$${\rm dim}((\boxplus_{i=1}^r\pi_i)_f^{{K^{{\rm GL}_{2n+1}}(N)}})
=d_{P_{\underline{m}}}(N) \prod_{i=1}^r
{\rm dim}(\pi^{{K^{{\rm GL}_{m_i}(N)}}}_{i,f}),
$$ 
Here if $\chi_i$ is the central character of $\pi_i$ and $\pi_{i,\infty}\simeq \tau_i$, then ${\rm dim}(\pi^{{K^{{\rm GL}_{m_i}(N)}}}_{i,f})=l^{{\rm cusp},{\rm ort}}(m_i,N,\tau_{i},\chi_i)$. Notice that the conductor of $\chi_i$ is a divisor of $N$. Summing up, we have the claim. 
\end{proof}
\begin{remark}\label{characters-exp}Let $r\ge 2$. The group homomorphism $((\Z/N\Z)^\times)^r\lra (\Z/N\Z)^\times,\ 
(x_1,\ldots,x_r)\mapsto x_1\cdots x_r$ is obviously surjective and it yields  
$$\left|\{(\chi_1,\ldots,\chi_r)\in \widehat{(\Z/N\Z)^\times}^r\ |\ \chi_1\cdots\chi_r=1\}\right|
=\frac{\left|\widehat{(\Z/N\Z)^\times}^r\right|}{\varphi(N)}.$$
This trivial equality explains the appearance of the factor $\ds\frac{1}{\varphi(N)}$ in 
Proposition \ref{second-est}. 
\end{remark}

Next we study $l^{{\rm cusp},{\rm ort}}(n,N,\tau,\chi)$ for $\tau\in \Pi({\rm GL}_n(\R))^c$ 
and for $n\ge 2$. Now if $\pi$ is a cuspidal representation of $\GL_{2m+1}$ which is orthogonal, i.e., $L(s,\pi, {\rm Sym}^2)$ has a pole at $s=1$, then $\pi$ comes from a cuspidal representation $\tau$ on $\Sp(2m)$. In this case, the central character $\omega_{\pi}$ of $\pi$ is trivial.

If $\pi$ is a cuspidal representation of $\GL_{2m}$ which is orthogonal, i.e., $L(s,\pi, {\rm Sym}^2)$ has a pole at $s=1$, then 
$\omega_\pi^2=1$; If $\omega_\pi=1$, $\pi$ comes from a cuspidal representation $\tau$ on the split orthogonal group $\SO(m,m)$;
If $\omega_\pi\ne 1$, then $\pi$ comes from a cuspidal representation $\tau$ on the quasi-split orthogonal group $\SO(m+1,m-1)$.

First we consider the case when $\chi$ is trivial in estimating 
$l^{{\rm cusp},{\rm ort}}(2n+\delta, N,\tau,\chi)$ where $\delta=0$ or 1. 
For a positive integer $n$, let $H=\begin{cases} \SO(n,n), &\text{if $G'=\GL_{2n}$}\\
\Sp(2n), &\text{if $G'=\GL_{2n+1}$}\end{cases}.$

We regard $H$ as a twisted elliptic endoscopic subgroup $G'$. 

\begin{prop}\label{transfer-ort} Let $N$ be an odd 
positive integer. 
For the pair $(G',H)$, the characteristic function of 
${\rm vol}(K^H(N))^{-1}1_{K^H(N)}$ as an element of 
$C^\infty_c(H(\Q_{S_N}))$ is transferred to  
$${\rm vol}(K^{G'}(N))^{-1} 1_{K^{G'}(N)}$$
as an element of $C^\infty_c(G'(\Q_{S_N}))$. 
\end{prop}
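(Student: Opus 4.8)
The plan is to imitate the proof of Proposition \ref{transfer}: reduce to a local statement at each odd prime dividing $N$, and there to a computation of twisted orbital integrals of the characteristic functions of principal congruence subgroups. Write $\theta$ for the outer automorphism of $G'$ that realizes $H$ as a twisted elliptic endoscopic subgroup, so that $H$ is, up to connected components, the $\theta$-fixed subgroup of $G'$. Since $N$ is odd, $S_N$ consists of odd primes, and both functions in the statement factor as tensor products over the primes $p\mid N$, namely $\bigotimes_{p\in S_N}\vol(K^{H}_p(N))^{-1}\mathbf{1}_{K^{H}_p(N)}$ and $\bigotimes_{p\in S_N}\vol(K^{G'}_p(N))^{-1}\mathbf{1}_{K^{G'}_p(N)}$. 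As twisted endoscopic transfer is compatible with such factorizations, it suffices to prove, for each odd prime $p\mid N$, that the normalized characteristic functions of $K^{H}_p(N)$ and of $K^{G'}_p(N)$ match in the sense of twisted endoscopy. For $G'=\GL_{2n+1}$, $H=\Sp(2n)$ this is exactly Proposition \ref{transfer}, i.e.\ \cite[Lemma 8.2.1-(i)]{GV}, so the genuinely new content is the even case $G'=\GL_{2n}$, $H=\SO(n,n)$.

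For that case I would run the argument of \cite{GV} in parallel. The congruence subgroup $K^{G'}_p(N)$ is $\theta$-stable, and $K^{G'}_p(N)\cap H(\Q_p)=K^{H}_p(N)$ under $H\hookrightarrow G'$. Because $p$ is odd, the quadratic form defining $\SO(n,n)$ is unramified at $p$, so the twisted transfer factor $\Delta(\gamma,\delta)$ may be normalized to be $1$ on every matching pair $(\gamma,\delta)$, $\gamma=\mathcal{N}\delta$ strongly regular. Granting this, the matching reduces to the identity of the $\theta$-twisted orbital integral of $\vol(K^{G'}_p(N))^{-1}\mathbf{1}_{K^{G'}_p(N)}$ at $\delta$ with the stable orbital integral of $\vol(K^{H}_p(N))^{-1}\mathbf{1}_{K^{H}_p(N)}$ at $\gamma=\mathcal{N}\delta$, together with vanishing of the former whenever $\gamma$ is not a norm. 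Both sides I would express, via the Cartan and Iwasawa decompositions, as counts of cosets modulo $p^{v_p(N)}$ — lattice counts on the $\GL_{2n}$ side and their $\O(n,n)$-invariant analogues on the orthogonal side — and match them term by term using that $2$ is a unit, so that the relevant reductions modulo $p^{v_p(N)}$ and the needed Hensel lifts of quadratic data are unobstructed. This is the same mechanism that underlies the fundamental-lemma-type identities for these congruence subgroups used in \cite{GV} and \cite{Yamauchi}.

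The step I expect to be the main obstacle is the disconnectedness of the fixed-point group in the even case: for $G'=\GL_{2n}$ one has $(G')^{\theta}=\O(n,n)$ rather than $\SO(n,n)$, so one must keep track of the component-group signs that enter the twisted transfer and check that they contribute no spurious constant — in contrast with the odd case $\GL_{2n+1}\supset\Sp(2n)$, where no such issue arises. This is precisely where the hypothesis that $N$ (hence every $p\mid N$) is odd is used: it forces all the orthogonal structures involved to be split and unramified and pins down the normalizations of Haar measures and transfer factors. Once this bookkeeping is settled, the coset count on the $\GL_{2n}$ side is formally identical to the $\Sp$ case, and the equality of the two normalized functions follows.
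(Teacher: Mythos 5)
The paper's entire proof is a one-line citation: ``It follows from \cite[Lemma 8.2.1-(i)]{GV}.'' That lemma of Ganapathy--Varma is already stated and proved for \emph{all} the relevant pairs of a split classical group and $\GL_m$ as twisted elliptic endoscopy — including $(\GL_{2n},\SO(n,n))$ — under the running hypothesis that the residual characteristic is odd. So the ``genuinely new content'' you single out (the even case) is not new: it is part of what \cite[Lemma 8.2.1-(i)]{GV} already delivers, which is exactly why the authors can give Proposition \ref{transfer} and Proposition \ref{transfer-ort} literally identical one-line proofs.

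Given that, your write-up takes a substantially different route: instead of citing the transfer result, you sketch a re-derivation of it via twisted orbital integral matching, lattice coset counts, and a discussion of transfer factor normalization and the component group $\O(n,n)/\SO(n,n)$. The intermediate reductions you make — factoring over $p\in S_N$, using that $p$ is odd so the orthogonal group is unramified at $p$, and observing that the odd-rank case is already covered — are all correct in spirit and are indeed the inputs behind the reference. But as a self-contained proof the even case is left as an acknowledged sketch: you flag the disconnectedness of $(G')^\theta=\O(n,n)$ and the transfer-factor normalization as ``the main obstacle,'' and then declare them settled by ``bookkeeping'' without carrying it out. That is precisely the part \cite{GV} is invoked to supply, so if you want to present an independent argument rather than a citation, this step has to actually be done; as written it is a gap. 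The more economical and intended argument is simply to apply \cite[Lemma 8.2.1-(i)]{GV} to $(G',H)$ uniformly, exactly as in Proposition \ref{transfer}.
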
 
\begin{proof} It follows from \cite[Lemma 8.2.1-(i)]{GV}.  
\end{proof}

Each cuspidal representation $\pi$ of $G'(\A)$ contributing to $l^{{\rm cusp},{\rm ort}}(N,\tau,\trep)$ can be 
regarded as a simple $A$-parameter. Also 
as a cuspidal representation, it  
strongly descends to a generic cuspidal representation $\Pi_\pi$ of $H(\A)$ 
whose $L$-parameter 
$\mathcal{L}(\Pi_{\tau})$ at infinity of 
$\Pi_{\pi}$ is same as one of $\pi_\infty$.   
In this setting, by \cite[Proposition 8.3.2-(b), p.483]{Ar-book}, the problem is reduced to estimate 
$$L^{{\rm cusp,gen}}(H,N,\mathcal{L}(\Pi_\tau),\trep)
:=\bigoplus_{\pi\subset L^{{\rm cusp,generic},{\rm ort}}(H(\Q)\bs H(\A),\mathcal{L}(\Pi_{\tau}),\trep)}
m(\pi)\pi^{K^{H}(N)},\, m(\pi)\in\{0,1,2\}
$$ 
where $\pi$ runs over all irreducible unitary, cohomological orthogonal cuspidal automorphic representations of $H(\A)$ whose 
$L$-parameter at infinity is isomorphic to $\mathcal{L}(\Pi_{\tau})$ with the 
central character $\chi=\trep$.  

\begin{prop}\label{trivial-case} Keep the notations as above. Then
\begin{itemize}
\item $l^{{\rm cusp},{\rm ort}}(2n+\delta,N,\tau,\trep)\le C_n(N){\rm dim}(L^{{\rm cusp,gen}}(H,N,\mathcal{L}(\Pi_\tau),\trep))$, where 
$C_n(N):=2^{(2n+\delta)\omega(N)}$ 
and $\delta=\begin{cases} 0, &\text{if $G'=\GL_{2n}$}\\1, &\text{if $G'=\GL_{2n+1}$}\end{cases}$.
\item ${\rm dim}(L^{{\rm cusp,gen}}(H,N,\mathcal{L}(\Pi_\tau),\trep)) \ll c\cdot {\rm vol}(K^H(N))^{-1}\sim c N^{{\rm dim}(H)}$ for some $c>0$, when the infinitesimal character of $\mathcal{L}(\Pi_\tau)$ is fixed and $N\to\infty$. 
\end{itemize}
\end{prop}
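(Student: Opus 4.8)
The plan is to treat the two estimates separately: the first in close analogy with Proposition \ref{second-est}, but with the roles of the general linear group and the classical group exchanged, and the second by the invariant trace formula on $H$ together with crude upper bounds for its geometric side. Throughout one is in the trivial central character case, so that for $G'=\GL_{2n}$ only the representations with $\omega_\pi=\trep$ intervene, while for $G'=\GL_{2n+1}$ triviality of the central character is automatic.

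For the first estimate, let $\pi$ be an orthogonal cuspidal automorphic representation of $G'(\A)$ with $\pi_\infty\simeq\tau$ contributing to $l^{{\rm cusp},{\rm ort}}(2n+\delta,N,\tau,\trep)$. By Theorem \ref{Ram} every local component $\pi_p$ is tempered, so $\pi$ determines a simple generic global $A$-parameter $\psi_\pi$ of $H$; the associated finite component group is trivial (the centralizer of an irreducible self-dual parameter in $\SO(2n+1,\C)$, resp.\ $\SO(2n,\C)$, lies in the center), so by Arthur's multiplicity formula every member of the global $A$-packet $\Pi_{\psi_\pi}$ occurs in the discrete spectrum of $H$, and by \cite[Proposition 8.3.2-(b)]{Ar-book} this packet contains a generic cuspidal member with archimedean parameter $\mathcal L(\Pi_\tau)$. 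I would then combine three inputs exactly as in Proposition \ref{second-est}: the strong descent of \cite[Proposition 8.3.2-(b)]{Ar-book}, which realizes $\pi$ as the functorial transfer of the $A$-packet $\Pi_{\psi_\pi}$ on $H$; the character relations of \cite[Theorem 1.5.1]{Ar-book}; and the transfer, from Proposition \ref{transfer-ort}, of ${\rm vol}(K^H(N))^{-1}1_{K^H(N)}$ to ${\rm vol}(K^{G'}(N))^{-1}1_{K^{G'}(N)}$. Together these match $\dim\pi_f^{K^{G'}(N)}$ against the level-$K^H(N)$ dimensions of the members of $\Pi_{\psi_\pi}$ with cohomological archimedean component, the only loss being the ambiguity of the local $A$-packets at the primes dividing $N$. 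Since $\pi\mapsto\psi_\pi$ is injective with pairwise disjoint global packets, summing over all such $\pi$ and regrouping the right-hand side into $\dim L^{{\rm cusp,gen}}(H,N,\mathcal L(\Pi_\tau),\trep)$ yields the first assertion; the factor $C_n(N)=2^{(2n+\delta)\omega(N)}$ is the product over $p\mid N$ of the bounds $|\Pi_{\psi_{\pi,p}}|\leq 2^{2n+\delta}$ from \cite[Theorem 1.5.1]{Ar-book}, playing exactly the role of $A_n(N)$ in Proposition \ref{second-est}.

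For the second estimate I would first bound $\dim L^{{\rm cusp,gen}}(H,N,\mathcal L(\Pi_\tau),\trep)$ by $\sum_\sigma m_{{\rm disc}}(\sigma)\dim\sigma_f^{K^H(N)}$, the sum over all $\sigma$ in the discrete spectrum of $L^2(H(\Q)\bs H(\A))$ with archimedean $L$-parameter $\mathcal L(\Pi_\tau)$ and trivial central character. Applying the invariant trace formula on $H$ to $f^H=f^H_\infty\otimes{\rm vol}(K^H(N))^{-1}1_{K^H(N)}$, with $f^H_\infty$ a pseudo-coefficient of the discrete series members of the archimedean $L$-packet of parameter $\mathcal L(\Pi_\tau)$, the spectral side dominates this sum up to a constant depending only on $\mathcal L(\Pi_\tau)$ (the pseudo-coefficient isolates the finitely many members of the archimedean packet, and the residual spectrum with this fixed regular infinitesimal character contributes boundedly). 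On the geometric side the identity term equals ${\rm vol}(H(\Q)\bs H(\A))\, d\, {\rm vol}(K^H(N))^{-1}$, with $d$ the formal degree of a discrete series in the packet, and this is $\sim c\, N^{\dim H}$ since $[H(\widehat\Z):K^H(N)]=|H(\Z/N\Z)|\sim N^{\dim H}$; all remaining geometric terms are $O({\rm vol}(K^H(N))^{-1})$ by the same kind of analysis carried out for $\Sp(2n)$ in the proof of Theorem \ref{thm:asy} and underlying \eqref{dimension} (in fact $O_\epsilon(N^{\dim H-1+\epsilon})$), or, since only an upper bound of the correct order is required, by the crude estimate that every orbital integral of $1_{K^H(N)}$ is $O_\epsilon(N^\epsilon\,{\rm vol}(K^H(N))^{-1})$. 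This gives the second assertion.

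The step I expect to be the main obstacle is the first assertion, since it rests on the precise content of Arthur's descent \cite[Proposition 8.3.2-(b)]{Ar-book} and, crucially, on the fact that Proposition \ref{transfer-ort} transfers the normalized characteristic functions of the \emph{principal congruence} subgroups $K^H(N)$ and $K^{G'}(N)$ — not merely of the maximal compacts — which is what forces the hypothesis that $N$ be odd and mirrors the delicate level bookkeeping in Proposition \ref{second-est}. The second assertion is comparatively soft: because only an upper bound of order ${\rm vol}(K^H(N))^{-1}$ is demanded, one need not establish a full dimension formula for cusp forms on $\SO(n,n)$ or $\Sp(2n)$ — which would require redoing the Shintani zeta function analysis underlying Theorem \ref{thm:asy} for these groups — and can instead rely on crude trace-formula bounds.
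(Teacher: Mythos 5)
Your treatment of the first assertion follows the paper's route exactly: the paper's proof reads ``the first claim follows from \cite[Proposition 8.3.2-(b), p.483]{Ar-book} with a completely similar argument of Proposition \ref{second-est},'' and your invocation of Arthur's descent, the character relations of \cite[Theorem 1.5.1]{Ar-book}, the transfer of Proposition \ref{transfer-ort}, and the injectivity of $\pi\mapsto\psi_\pi$, with $C_n(N)$ playing the role of $A_n(N)$, is precisely that argument.

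For the second assertion, however, the paper simply cites Savin's limit multiplicity theorem \cite{Savin}, whereas you propose to re-derive the bound via the invariant trace formula on $H$ with a pseudo-coefficient at infinity. This is a genuinely different route, and it has a real gap: you dismiss the geometric-side bound as ``comparatively soft,'' offering either ``the same kind of analysis carried out for $\Sp(2n)$ in the proof of Theorem \ref{thm:asy}'' or a ``crude estimate'' that every orbital integral of $1_{K^H(N)}$ is $O_\epsilon(N^\epsilon\,\mathrm{vol}(K^H(N))^{-1})$. Neither option is available as stated. The Shintani zeta function analysis in Theorem \ref{thm:asy} is tied to the prehomogeneous vector space $(H_r,V_r)$ of symmetric matrices arising from the Siegel parabolic of $\Sp(2n)$; for $H=\SO(n,n)$ the relevant unipotent radicals give a different prehomogeneous vector space, and the corresponding analysis is not in the paper. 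And the per-orbit ``crude estimate'' does not control the geometric side, because --- exactly as the paper emphasizes in the Introduction when explaining why $I_3(f)$ is hard --- the number of $(M,\tilde S)$-equivalence classes of unipotent elements contributing grows with $N$, so a bound on each orbital integral does not bound their sum. There is also the subtlety that a pseudo-coefficient of discrete series has nonzero (possibly negative) traces on non-tempered representations with the same infinitesimal character, so extracting an \emph{upper} bound on the discrete-series multiplicity from $I_{\mathrm{spec}}$ requires care. Savin's theorem, proved by a DeGeorge--Wallach-type argument rather than by bounding the geometric side of the trace formula, gives the needed estimate $\dim L^{\mathrm{cusp,gen}}(H,N,\mathcal L(\Pi_\tau),\trep)\ll\mathrm{vol}(K^H(N))^{-1}$ directly, and that is why the paper cites it instead of redoing the geometric analysis.
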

\begin{proof} The first claim follows from \cite[Proposition 8.3.2-(b), p.483]{Ar-book} with 
a completely similar argument of Proposition \ref{second-est}. 
 
The second claim follows from \cite{Savin}.
\end{proof}

Next we consider the case when $\chi$ is a quadratic character. 
In this case, a cuspidal representation $\pi$  
contributing to $L^{{\rm cusp},{\rm ort}}(K^{{\rm GL}_n}(N),\tau_\infty,\chi)$ comes from a
cuspidal representation of the quasi-split orthogonal group $\SO(m+1,m-1)$ defined over the quadratic extension associated to $\chi$.
However any transfer theorem for Hecke elements in $(\GL_{2m},\SO(m+1,m-1))$ remains open. 
To get around this situation, we make use of the transfer theorems for some Hecke elements in
 the quadratic base change due to \cite{Yamauchi}. For this, we need the following assumptions on the level $N$:
\begin{enumerate}
\item $N$ is an odd prime or 
\item $N$ is odd and all prime divisors $p_1,\ldots,p_r\ (r\ge 2)$ of $N$ are 
congruent to 1 modulo 4 and 
$\Big(\ds\frac{p_i}{p_j}\Big)=1$ for $i\ne j$,
where $\Big(\ds\frac{\ast}{\ast}\Big)$ denotes the Legendre symbol. 
\end{enumerate}
These conditions are needed in order that for any quadratic extension $M/\Q$ with the conductor $d_M$
dividing $N$, there exists an integral ideal $\frak{N}$ of $M$ such that $\frak{N} \frak{N}^\theta
=(d_M)$ where $\theta$ is the generator of ${\rm Gal}(M/\Q)$. 

\begin{prop}\label{fixedV}
Keep the assumptions on $N$ as above. 
Then 
$$l^{{\rm cusp},{\rm ort}}(2m,N,\tau,\chi)\le 2^{2m\cdot\omega(N)}
{\rm vol}(K^{H}(N))^{-1},
$$
where $H={\rm SO}(m,m)$. 
\end{prop}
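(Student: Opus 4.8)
The plan is to transport the problem to the quadratic field $M=\Q(\sqrt d)$ cut out by $\chi=\chi_d$, over which the quasi-split outer form of $\SO_{2m}$ becomes the split group $H=\SO(m,m)$, and to compensate for the missing transfer theorem for the pair $(\GL_{2m},\SO^{*}_{2m})$ over $\Q$ by quadratic base change, keeping track of levels via \cite{Yamauchi}. First I would fix the auxiliary level over $M$: since the conductor of $\chi$ divides $N$ and $N$ satisfies $(1)$ or $(2)$, there is an integral ideal $\mathfrak{N}$ of $\cO_M$, supported over the rational primes dividing $N$, with $\mathfrak{N}\,\mathfrak{N}^{\theta}=(N)$ and with exactly $\omega(N)$ prime-ideal divisors; then $\Nm_{M/\Q}(\mathfrak{N})=N$ and $\mathfrak{N}$ is prime to $2$. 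I record one consequence for use at the end: under both $(1)$ and $(2)$ every rational prime $p\mid N$ is split or ramified in $M$, never inert (in case $(2)$ because the conditions $\left(\tfrac{p_i}{p_j}\right)=1$ force $\left(\tfrac{d}{p}\right)=1$ for every $p\mid N$ unramified in $M$), so the local factors of $\vol(K^{H}(\mathfrak{N}))^{-1}$ over $M$ and of $\vol(K^{H}(N))^{-1}$ over $\Q$ agree prime by prime.

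Now let $\pi$ be an orthogonal cuspidal representation of $\GL_{2m}(\A)$ with $\pi_{\infty}\simeq\tau$, central character $\chi$, and $\pi_{f}^{K^{{\rm GL}_{2m}}(N)}\neq 0$; by \cite[Proposition 8.3.2-(b)]{Ar-book} it descends to a generic cuspidal representation of the quasi-split outer form $\SO^{*}_{2m}/\Q$ attached to $M$. I would base change $\pi$ to $M$: by Arthur--Clozel, $\pi_{M}$ is an isobaric automorphic representation of $\GL_{2m}(\A_{M})$ whose central character $\chi\circ\Nm_{M/\Q}$ is trivial, and since $\SO^{*}_{2m}\times_{\Q}M\cong H_{M}$ is split, $\pi_{M}$ is orthogonal over $M$ and is attached to a discrete $A$-parameter for $H$ over $M$ --- cuspidal if $\pi$ is not $M$-distinguished, and of the form $\rho\boxplus\rho^{\theta}$ for a conjugate-self-dual cuspidal representation $\rho$ of $\GL_{m}(\A_{M})$ otherwise --- with archimedean data pinned down by $\tau$. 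The decisive input is the transfer of Hecke elements in quadratic base change of \cite{Yamauchi}, whose applicability is exactly what forces the congruence hypotheses on $N$: it matches $\vol(K^{{\rm GL}_{2m}}(N))^{-1}\,1_{K^{{\rm GL}_{2m}}(N)}$ over $\Q$ with $\vol(K^{{\rm GL}_{2m}}(\mathfrak{N}))^{-1}\,1_{K^{{\rm GL}_{2m}}(\mathfrak{N})}$ over $M$. Combined with the base-change character identity this yields $\pi_{M,f}^{K^{{\rm GL}_{2m}}(\mathfrak{N})}\neq 0$ and bounds $\dim\pi_{f}^{K^{{\rm GL}_{2m}}(N)}$ by $\dim\pi_{M,f}^{K^{{\rm GL}_{2m}}(\mathfrak{N})}$ up to the size $\le 2$ of the base-change fibre.

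Over $M$ the group $H=\SO(m,m)$ is split and $\mathfrak{N}$ is odd, so Proposition \ref{transfer-ort} holds verbatim for $({\rm GL}_{2m},H)$ over $M$ with level $\mathfrak{N}$; routing the argument through $H/M$ rather than $\GL_{2m}/M$ is essential, as it is this that produces the correct exponent $\dim H=m(2m-1)$ in place of the far larger $\dim\GL_{2m}$. Running the argument of Proposition \ref{trivial-case} over $M$ --- the character identity of \cite[Theorem 1.5.1]{Ar-book} together with the bound $2^{2m}$ on the size of the local $A$-packet of $H$ at each of the $\omega(N)$ primes dividing $\mathfrak{N}$ --- bounds $\dim\pi_{M,f}^{K^{{\rm GL}_{2m}}(\mathfrak{N})}$ by $2^{2m\,\omega(N)}$ times $\dim\Pi_{f}^{K^{H}(\mathfrak{N})}$ for the corresponding member $\Pi$ of the $A$-packet of $H(\A_{M})$, and by the multiplicity bound of \cite{Savin} (the second bullet of Proposition \ref{trivial-case}, now over $M$) the sum of $\dim\Pi_{f}^{K^{H}(\mathfrak{N})}$ over all such $\Pi$ is $\ll\vol(K^{H}(\mathfrak{N}))^{-1}=\vol(K^{H}(N))^{-1}$. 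Summing $\dim\pi_{f}^{K^{{\rm GL}_{2m}}(N)}$ over all contributing $\pi$ then gives $l^{{\rm cusp},{\rm ort}}(2m,N,\tau,\chi)\le 2^{2m\,\omega(N)}\vol(K^{H}(N))^{-1}$.

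The main obstacle is the base-change machinery of the second paragraph. One must verify that $\pi_{M}$ genuinely descends to a discrete automorphic representation of $\SO(m,m)/M$ --- including in the $M$-distinguished case, where $\pi_{M}$ is not cuspidal but the parameter $\rho\boxplus\rho^{\theta}$ is still a discrete $A$-parameter of $\SO(m,m)/M$ --- and, above all, that the transfer of \cite{Yamauchi} applies to precisely the normalized characteristic function of $K(N)$ for the ideal $\mathfrak{N}$ fixed in the first paragraph; this, and not merely the bare existence of the quadratic field $M$, is what the congruence conditions on the prime divisors of $N$ are designed to secure. What remains is bookkeeping: tracking the base-change fibres and the local $A$-packet sizes precisely enough to land on the constant $2^{2m\,\omega(N)}$, and checking the prime-by-prime identity of the two volume factors via the ``no inert prime'' observation above.
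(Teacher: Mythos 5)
Your proposal follows the same route as the paper: fix the ideal $\frak{N}$ of $\cO_M$ with $\frak{N}\frak{N}^\theta=(N)$, transfer the level via Yamauchi's base-change identity for $1_{K^{\GL_{2m}}(N)}$, descend $\Pi={\rm BC}_{M/\Q}(\pi)$ to $\SO(m,m)/M$, run the Proposition \ref{trivial-case} argument over $M$, and conclude by comparing $\vol(K^{H_M}(\frak{N}))$ with $\vol(K^H(N))$. Your ``no inert prime'' observation is a nice explicit justification of why the hypotheses on $N$ secure the existence of $\frak{N}$, and your remark that routing through $H/M$ rather than $\GL_{2m}/M$ is what produces the exponent $\dim H$ is exactly right.

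There is, however, a genuine gap where you treat the case of non-cuspidal $\Pi$. You claim that when $\Pi=\rho\boxplus\rho^\theta$, the parameter is ``still a discrete $A$-parameter of $\SO(m,m)/M$.'' This is not true in general: self-duality of $\rho\boxplus\rho^\theta$ forces either (a) $\rho^\vee\simeq\rho$ (and then, since the sum must be orthogonal, both summands are orthogonal and the parameter is discrete), or (b) $\rho^\vee\simeq\rho^\theta\neq\rho$, in which case $\rho$ is not self-dual, the parameter $\rho\boxplus\rho^\vee$ factors through a proper Levi, and it is \emph{not} a discrete parameter for $\SO(m,m)/M$. In case (b) the reduction to $l^{\rm cusp,ort}$ over $M$ simply does not apply. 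The paper sidesteps this entirely: it does not try to fit non-cuspidal $\Pi$ into the endoscopic framework, but instead bounds the dimension of $\bigoplus_{\Pi:\text{non-cuspidal}}\Pi_f^{K^{\GL_{2m}}_M(\frak{N})}$ directly (separately for $m=2$ using the dihedral count from \cite[Section~2.6]{KWY1}, and for $m\geq3$ by an explicit exponent count) and shows it is negligible against $\vol(K^H(N))^{-1}\sim cN^{m(2m-1)}$. You would need to replace your discrete-parameter claim with a comparable negligibility argument.

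A secondary point: you assert that $\Pi$ is orthogonal because ``$\SO^*_{2m}\times_\Q M\cong H_M$ is split,'' but this appeals to a compatibility of base change between $\SO^*_{2m}$ and $\GL_{2m}$ that you do not justify. The paper instead proves orthogonality directly: writing $L(s,\Pi,\Sym^2)=L(s,\pi,\Sym^2)L(s,\pi,\Sym^2\otimes\chi)$ and using $L(s,\pi\times(\pi\otimes\chi))=L(s,\pi,\wedge^2\otimes\chi)L(s,\pi,\Sym^2\otimes\chi)$ together with $\pi\not\simeq\pi\otimes\chi$ (in the cuspidal case) to conclude that $L(s,\Pi,\Sym^2)$ inherits the pole at $s=1$. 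Including such an $L$-function argument would close this second gap.
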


\begin{proof} 
Let $M/\Q$ be the quadratic extension associated to $\chi$ and $\mathcal{O}_M$ 
the ring of integers of $M$. Let $\theta$ be the generator of ${\rm Gal}(M/\Q)$.  
Let $K^{{\rm GL}_{2m}}_M(\frak{N})$ be the principal congruence subgroup of $\GL_{2m}(\widehat{\Z}\otimes_\Z
\mathcal{O}_M)$ of the level $\frak{N}$. 
Clearly, the $\theta$-fixed part of $K^{{\rm GL}_{2m}}_M(\frak{N})$ 
is $K^{{\rm GL}_{2m}}(d_M)$ where $d_M$ is the conductor of $M/\Q$ and 
it contains $K^{{\rm GL}_{2m}}(N)$ since $d_M|N$. 
Applying \cite[Theorem 1.6]{Yamauchi}, we have 
for a cuspidal representation $\pi$ of $\GL_{2m}(\A)$ and its base change $\Pi:=
{\rm BC}_{M/\Q}(\pi)$ 
to $\GL_{2m}(\A_M)$,  
$${\rm vol}(K^{{\rm GL}_{2m}}(N))^{-1}{\rm tr}(\pi(1_{K^{{\rm GL}_{2m}}(N)}))=
{\rm vol}(K^{{\rm GL}_{2m}}_M(\frak{N}))^{-1}
{\rm tr}(\Pi(1_{K^{{\rm GL}_{2m}}_M(\frak{N})})).
$$
Recall that our $\pi$ contributing to 
$L^{{\rm cusp},{\rm ort}}(2m,N,\tau,\chi)$ is orthogonal, namely, 
$L(s,\pi, \Sym^2)$ has a pole at $s=1$. 
Note that 
$L(s,\Pi, \Sym^2)=L(s,\pi,\Sym^2)L(s,\pi, \Sym^2\otimes\chi)$.
Now $L(s,\pi \times (\pi\otimes\chi))=L(s,\pi,\wedge^2\otimes\chi)
L(s,\pi,\Sym^2\otimes\chi)$. 
Suppose $\Pi$ is cuspidal. Then $\pi\not\simeq \pi\otimes\chi$. 
So the left hand side has no zero at $s=1$,
and $L(s,\pi,\Sym^2\otimes\chi)$ has no zero at $s=1$.
Therefore, $L(s,\Pi,\Sym^2)$ has a pole at $s=1$.

If $\Pi$ is non-cuspidal, then by \cite{AC}, there exists a cuspidal representation $\tau$ of $\GL_m(\A_M)$ such that 
$$\Pi=\tau\boxplus \tau^\theta.
$$
In such a case, if $m=2$, then $\pi={\rm AI}_M^\Bbb Q \tau$ for some cuspidal 
representation $\tau$ of $\GL_2(\A_M)$; an automorphic induction from $\GL_2(\A_M)$ to $\GL_4(\A_\Q)$. Since $\pi$ is cuspidal and orthogonal, $\tau$ has to be dihedral. 
Such $\pi$ are counted in \cite[Section 2.6]{KWY1} and it amounts to 
$O(N^{\frac{11}{2}+\ve})$ for any $\ve>0$. This will be negligible because ${\rm vol}(K^H(N))\sim c N^{m(2m-1)}=c N^6$ for some constant $c>0$.  
Assume $m\ge 3$. 
It is easy to see that the dimension of 
$\ds\bigoplus_{\Pi:\text{non-cuspidal}}\Pi^{K^{{\rm GL}_{2m}}_M(\frak{N})}_f$ 
is bounded by 
$$O(N^{m^2-1+\frac{m(m+1)}{2}})=O(N^{\frac{3}{2}m^2+\frac{m}{2}-1})
$$
where '$-1$' of $m^2-1$ in the exponent of LHS in the above equation is inserted because of the fixed central character. 
Since ${\rm dim}\, \SO(m,m)=m(2m-1)$ and $m\ge 3$, 
spaces $\Pi^{K^{{\rm GL}_{2m}}_M(\frak{N})}_f$ for which $\Pi$ is non-cuspidal are negligible in the estimation. 
Further, $\Pi$ is orthogonal with trivial central character. [The central character of $\Pi$ is $\chi\circ N_{M/\Bbb Q}=1$.]
Therefore, we can bound $l^{{\rm cusp},{\rm ort}}(2m,N,\tau,\chi)$ by 
$$l^{{\rm cusp},{\rm ort}}(2m,\frak{N},{\rm BC}_{M_\infty/\R}(\tau),1),
$$ 
which is 
similarly defined for cuspidal representations of $\GL_{2m}(\A_M)$. 
Applying the argument of the proof of Proposition \ref{trivial-case} to 
$(\GL_{2m}/M,{\rm SO}(m,m)/M)$, 
the quantity $l^{{\rm cusp},{\rm ort}}(2m,N,\tau,\chi)$ is bounded by 
$2^{2m \omega(\frak{N})}{\rm vol}(K^{H_M}(\frak{N}))^{-1}$ where 
$H_M:={\rm SO}(m,m)/M$ and $\omega(\frak{N})$ denotes the number of 
prime ideals dividing $\frak{N}$. 
The claim follows from $\mathcal{O}_M/\frak{N}\simeq \Z/N\Z$ since 
${\rm vol}(K^{H_M}(\frak{N}))={\rm vol}(K^{H}(N))$ and clearly $\omega(\frak{N})=
\omega(N)$. 
\end{proof}


Note that for any split reductive group $\mathcal{G}$ over $\Q$ and the principal congruence 
subgroup $K^\mathcal{G}(N)$ of level $N$, we see easily that 
${\rm vol}(K^\mathcal{G}(N))\sim c N^{{-\rm dim}\, \mathcal{G}}$ for some constant $c>0$ as $N\to\infty$. Furthermore $\omega(N)\ll \frac {\log N}{\log\log N}$. Hence 
$2^{\omega(N)}\ll N^\epsilon$, and
 $A_n(N)=O(N^\ve)$ and $C_{m_i}(N)=O(N^\ve)$ for each $1\le i\le r$. 

\begin{thm}\label{non-simple} Assume (\ref{suff-reg}). Keep the assumptions on $N$ as in Proposition \ref{fixedV}. 
Then $|HE_{\uk}(N)^{ng}|=O_n(N^{2n^2+n-1+\ve})$ for any $\ve>0$. In particular, 
$$\ds\lim_{N\to \infty}\frac{|HE_{\uk}(N)^{ng}|}{|HE_{\uk}(N)|}=0.
$$ 
\end{thm}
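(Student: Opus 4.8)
The plan is to derive Theorem \ref{non-simple} directly from the bound on $|HE_{\uk}(N)^{ng}|$ furnished by Proposition \ref{second-est}, estimating each of its summands by a power of $N$ governed by the dimension of the relevant twisted endoscopic classical group, and then using the elementary fact that these dimensions add up — across \emph{any} partition of $2n+1$ into $r\ge 2$ parts — to $\dim\Sp(2n)=2n^2+n$. First I would record that the outer factor in Proposition \ref{second-est} is small: since $\omega(N)\ll\log N/\log\log N$ one has $A_n(N)=2^{(2n+1)\omega(N)}=O(N^\epsilon)$ and $\varphi(N)\gg N^{1-\epsilon}$, so $A_n(N)/\varphi(N)=O(N^{-1+\epsilon})$. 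The outer sum over $P(2n+1)$ is finite; because (\ref{suff-reg}) makes $\sigma_\uk$ regular, there are only $O_{n,\uk}(1)$ tuples $(\tau_1,\dots,\tau_r)$ of cohomological $(\mathfrak{gl}_{m_i},O(m_i))$-modules with $c(\boxplus_i\tau_i)=c(\sigma_\uk)$; and for each index the number of quadratic Hecke characters of conductor dividing $N$ is at most $2^{\omega(N)}=O(N^\epsilon)$. So everything reduces to proving, for each fixed $\underline m=(m_1,\dots,m_r)\in P(2n+1)$ with $r\ge2$ and each admissible $(\tau_i)$, $(\chi_i)$,
\[
d_{P_{\underline m}}(N)\prod_{i=1}^r l^{{\rm cusp},{\rm ort}}(m_i,N,\tau_i,\chi_i)=O_{n,\uk,\epsilon}\!\left(N^{2n^2+n+\epsilon}\right).
\]

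For the first factor, $d_{P_{\underline m}}(N)=|P_{\underline m}(\Z/N\Z)\bs\GL_{2n+1}(\Z/N\Z)|\ll N^{\dim(\GL_{2n+1}/P_{\underline m})+\epsilon}=N^{\sum_{i<j}m_im_j+\epsilon}$. For each factor $l^{{\rm cusp},{\rm ort}}(m_i,N,\tau_i,\chi_i)$ I would invoke the transfer results already proved: if $m_i$ is odd the central character $\chi_i$ is forced to be trivial and $\pi_i$ descends to $\Sp(m_i-1)$, so Proposition \ref{trivial-case} gives $\ll N^{\dim\Sp(m_i-1)+\epsilon}$; if $m_i$ is even with $\chi_i=\trep$, then $\pi_i$ descends to $\SO(m_i/2,m_i/2)$ and Proposition \ref{trivial-case} again gives $\ll N^{\dim\SO(m_i/2,m_i/2)+\epsilon}$; if $m_i$ is even with $\chi_i\ne\trep$, Proposition \ref{fixedV} — and it is precisely here that the arithmetic hypotheses on $N$ are used — gives the same bound with $\SO(m_i/2,m_i/2)$; and if $m_i=1$ the factor is $O(1)$ for a fixed $\chi_i$. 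Since $\dim\Sp(2\ell)=\ell(2\ell+1)=\binom{2\ell+1}{2}$ and $\dim\SO(\ell,\ell)=\ell(2\ell-1)=\binom{2\ell}{2}$ (and $\binom{1}{2}=0$), in every case
\[
l^{{\rm cusp},{\rm ort}}(m_i,N,\tau_i,\chi_i)\ll N^{\binom{m_i}{2}+\epsilon}.
\]

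Putting these together, $d_{P_{\underline m}}(N)\prod_i l^{{\rm cusp},{\rm ort}}(m_i,N,\tau_i,\chi_i)\ll N^{e+\epsilon}$ with $e=\sum_{i<j}m_im_j+\sum_i\binom{m_i}{2}$, and the identity
\[
\sum_{i<j}m_im_j+\sum_{i=1}^r\binom{m_i}{2}=\binom{m_1+\cdots+m_r}{2}=\binom{2n+1}{2}=2n^2+n
\]
shows $e$ is the same for every partition. Multiplying by the $O(N^{-1+\epsilon})$ from the first paragraph yields $|HE_{\uk}(N)^{ng}|=O_{n,\uk,\epsilon}(N^{2n^2+n-1+\epsilon})$. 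The limit statement then follows from (\ref{dimension}): since $C_N\ge\prod_{i=1}^n\zeta(2i)^{-1}>0$ one has $|HE_{\uk}(N)|\gg_{\uk}N^{2n^2+n}$, so $|HE_{\uk}(N)^{ng}|/|HE_{\uk}(N)|\ll_{n,\uk,\epsilon}N^{-1+\epsilon}\to0$.

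The genuinely hard work is already contained in Propositions \ref{trivial-case} and, above all, \ref{fixedV}: the even-$\GL$-block case with a nontrivial quadratic central character admits no direct Hecke-element transfer between $\GL_{2m}$ and a quasi-split even orthogonal group, so it must be routed through the quadratic base change of \cite{Yamauchi}, which is exactly why the conditions ``$N$ an odd prime'' or ``$N$ odd with prime divisors $\equiv1\ (\mathrm{mod}\ 4)$ and pairwise quadratic residues'' appear. Granting those propositions, the only delicate point here is the uniformity over partitions, which rests on the identity $\sum_{i<j}m_im_j+\sum_i\binom{m_i}{2}=\binom{2n+1}{2}=\dim\Sp(2n)$ — a shadow of the twisted elliptic endoscopy relating $\Sp(2n)$ and $\GL_{2n+1}$ — while the extra saving of one power of $N$ (the ``$-1$'') is precisely the central-character relation $\prod_i\omega_i^{d_i}=\trep$ imposed on global $A$-parameters, which is a nontrivial constraint exactly because $r\ge2$ for non-genuine forms.
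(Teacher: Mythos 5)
Your proof is correct and follows essentially the same route as the paper's: invoke Proposition \ref{second-est}, bound each summand by $N^{\sum_{i<j}m_im_j+\sum_i m_i(m_i-1)/2+\epsilon}$ using Propositions \ref{trivial-case} and \ref{fixedV} together with $d_{P_{\underline m}}(N)=O(N^{\sum_{i<j}m_im_j})$, observe the key identity $\sum_{i<j}m_im_j+\sum_i\binom{m_i}{2}=\binom{2n+1}{2}=2n^2+n$ making the exponent independent of the partition, and gain the extra $N^{-1+\epsilon}$ from $A_n(N)/\varphi(N)$. You flesh out a few points the paper leaves implicit (finiteness of the set of archimedean tuples $(\tau_i)$ from regularity of $\sigma_\uk$, the $O(N^\epsilon)$ count of quadratic characters, and reading $l^{{\rm cusp,ort}}(1,N,\tau,\chi)=O(1)$ for a fixed $\chi$ rather than the $\varphi(N)$ appearing in Definition \ref{gln}, which is the interpretation the argument actually requires), but these are elaborations, not a different method.
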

\begin{proof} By Proposition \ref{second-est}, for each partition $\underline{m}=(m_1,\ldots,m_r)$ of $2n+1$, 
we have only to estimate 
$$\frac{A_n(N)}{\varphi(N)} d_{P_{\underline{m}}}(N)\prod_{i=1}^r
l^{{\rm cusp},{\rm ort}}(m_i,N,\tau_{i},\chi_i).
$$
By Proposition \ref{trivial-case} and Proposition \ref{fixedV}, 
$$l^{{\rm cusp},{\rm ort}}(m_i,N,\tau_{i},\chi_i)\ll N^{\frac{m_i(m_i-1)}{2}+\ve},
$$
for any $\ve>0$. 
Further $d_{P_{\underline{m}}(N)}=O(N^{{\rm dim}\, P_{\underline{m}}\backslash \GL_{2n+1}})=O(N^{\sum_{1\le i<j\le r}m_im_j})$. 
Note that $\varphi(N)^{-1}=O(N^{-1+\ve})$ for any $\ve>0$. 
Since 
$$\sum_{1\le i<j\le r}m_i m_j+\sum_{i=1}^r\frac{m_i(m_i-1)}{2}=
\frac{1}{2}\left(\sum_{1\le i,j\le r}m_i m_j \right)-\frac{1}{2}\sum_{i=1}^r m_i=\frac 12(2n+1)^2-\frac 12(2n+1)=2n^2+n,
$$ 
we have the first claim. 

The second claim follows from the dimension formula (\ref{dimension}).
\end{proof}

\section{A notion of newforms in $S_{\uk}(\Gamma(N))$}

In this section, we introduce a notion of a newform in $S_\uk(\Gamma(N))$ with respect to 
principal congruence subgroups.  
Since any local newform theory for $\Sp(2n)$ is unavailable except for $n=1,2$, we 
need a notion of newforms so that we can control a lower bound of conductors for such newforms. This is needed in application to low lying zeros. (See Theorem \ref{stan-conductor} and Lemma \ref{logN}.) 

Recall the description $$S_\uk(\Gamma(N))=\bigoplus_{\psi\in \Psi(G)}
\bigoplus_{\pi\in \Pi_\psi\atop \pi_\infty\simeq \sigma_\uk}m_{\pi,\psi}\pi_f^{K(N)}$$
in terms of Arthur's classification. 

\begin{Def}\label{newspace1} 
The new part (space) of $S_\uk(\Gamma(N))$ is defined by  
$$S^{{\rm new}}_\uk(\Gamma(N))=\bigoplus_{\psi\in \Psi(G)}
\bigoplus_{\pi=\pi_f\otimes \sigma_\uk\in \Pi_\psi\atop 
\pi^{K(N)}\neq 0 \text{ but } \pi^{K(d)}=0 \text{ for any }d\mid N,\ d\neq N}m_{\pi,\psi}\pi_f^{K(N)}.
$$
The orthogonal complement $S^{{\rm old}}_\uk(\Gamma(N))$ of $S^{{\rm new}}_\uk(\Gamma(N))$ in  
$S_\uk(\Gamma(N))$ with respect to Petersson inner product is said to be the old space. 
Let $HE^{{\rm new}}_{\uk}(N)$ be a subset of $HE_\uk(N)$ which is a basis of $S^{{\rm new}}_\uk(\Gamma(N))$.
\end{Def}
\begin{remark}
As the referee pointed out, $S^{{\rm old}}_\uk(\Gamma(N))$ is the intersection of 
$S_\uk(\Gamma(N))$ with the
smallest $G(\A_f )$-invariant space of functions on $G(\Q)\bs G(\A)$ containing 
$S_\uk(\Gamma(M))$
for all proper divisors $M$ of $N$.
\end{remark}

Set $d_p=(1-p^{-1})^n$, $d_M=\prod_{p|M} d_p$, and $C_p=\prod_{j=1}^n(1-p^{-2j})$, $C_M=\prod_{p|M} C_p$. We set $d_1=1$ and $C_1=1$. 

Recall $d_\uk(N)=\dim S_\uk(\G(N))=C_{\uk} \, C_N \, N^{2n^2+n}+O_{\uk}(N^{2n^2})$.

\begin{lem}\label{lem:20220409}
Assume that \eqref{suff-reg} holds and $N$ is square free. 
Then we have 
\[
d_\uk(N)= \sum_{M|N} \dim \, S^{{\rm new}}_\uk(\G(M)) \, \left(\frac{N}{M}\right)^{n^2} \, C_{\frac{N}{M}} \, d_{\frac{N}{M}}^{-1}.
\] 
\end{lem}
\begin{proof}
Let $M\mid N$. 
Take an automorphic representation $\pi=\pi_f\otimes \sigma_\uk$ such that $\dim \pi_f^{K(M)}> 0$ and $\dim \pi_f^{K(L)}= 0$ for any $L\mid M$, $L<M$. 
Under this condition, $\pi$ has an intersection with $S^{{\rm new}}_\uk(\G(M))$, and also with $S_\uk(\G(N))$. 
Let $\pi_f=\otimes_p \pi_p$.
By the assumptions and Theorem \ref{Ram}, for any prime $p \nmid M$, $\pi_p$ is tempered spherical, and so $\pi_p$ is an irreducible induced representation from a Borel subgroup $B$ of $G(\Q_p)$. So $\dim\, \pi_p^{K_p}=1$. Now $K_p/K_p(p)\simeq \Sp_{2n}(\Bbb F_p)$, 
$\#\Sp_{2n}(\Bbb F_p)=p^{2n^2+n}C_p$ and $\# B(\Bbb F_p)=p^{n^2+n}d_p$.
Hence, $\dim \pi_p^{K_p(p)}=p^{n^2} C_p\, d_p^{-1}$ for all $p \nmid M$. 
Since $N$ is square free, this leads to 
\[
\dim  \pi_f^{K(N)}=\dim \pi_f^{K(M)} \times \left(\frac{N}{M}\right)^{n^2} \, C_{\frac{N}{M}} \, d_{\frac{N}{M}}^{-1}. 
\]
Thus, we obtain the assertion. 
\end{proof}

\begin{thm}\label{newforms} Assume that \eqref{suff-reg} holds and $N$ is square free. 
Then we have 
\[
\dim \, S^{{\rm new}}_\uk(\G(N))= C_{\uk} \, C_N \, N^{2n^2+n} \prod_{p|N} \left( 1-d_p^{-1} \, p^{-n^2-n} \right) + O_\uk ( N^{2n^2} ).
\] 
Here $\zeta(n^2)^{-1}< \prod_{p|N} \left( 1-d_p^{-1} \, p^{-n^2-n}\right)< 1$ if $n>1$. If $n=1$, $\prod_{p|N} \left( 1-d_p^{-1} \, p^{-2}\right)>\prod_p \left(1-\frac 1{p(p-1)}\right)=0.374...$
\end{thm}
\begin{proof} Since $C_{\frac NM}=C_N/C_M$ and $d_{\frac NM}=d_N/d_M$,
from Lemma \ref{lem:20220409}, we have
$$d_{\uk}(N) N^{-n^2}C_N^{-1}d_N =\sum_{M|N} \dim S_\uk^\mathrm{new}(\G(M))M^{-n^2}C_M^{-1}d_M.
$$
The M\"obius inversion formula gives
\begin{equation*}
\dim S_\uk^\mathrm{new}(\G(N))N^{-n^2}C_N^{-1}d_N =\sum_{M\mid N} \mu(M)\, d_\uk(N/M)(N/M)^{-n^2}C_{N/M}^{-1}d_{N/M},
\end{equation*}
where $\mu$ denotes the M\"obius function. Therefore,
\begin{equation}\label{eq:20220410}
\dim S_\uk^\mathrm{new}(\G(N)) =\sum_{M\mid N} \mu(M)\, d_\uk(N/M)M^{n^2}C_Md_M^{-1}.
\end{equation}
By \cite[Corollary 1.2]{Wakatsuki}, there exist constants $C_{\uk,r}$ such that 
$d_\uk(N)=\sum_{r=0}^n C_{\uk,r}C_N\, N^{f(r)}$ if $N>2$, where $f(r)=2n^2+n +\frac{r(r-1)}{2}-nr$ and $C_{\uk,0}=C_{\uk}$. 
Further, we take two constants $D_1$ and $D_2$ so that $d_\uk(N)=\sum_{r=0}^n C_{\uk,r} \, C_N \, N^{f(r)} +D_N$ for $N=1$ or $2$. 
Therefore, by \eqref{eq:20220410}, we obtain

\begin{eqnarray*}
&& \dim S_\uk^\mathrm{new}(\G(N)) =\sum_{r=0}^n C_{\uk,r} \, C_N \, N^{f(r)} \sum_{M|N} \mu(M)d_M^{-1}M^{n^2-f(r)}  \\
&&\phantom{xxxxxxx} +\mu(N) \, N^{n^2}\, C_N \, d_N^{-1} \, D_1 + \begin{cases} \mu(N/2) \, (N/2)^{n^2}\, C_{N/2} \, d_{N/2}^{-1} \, D_2 & \text{if $2\mid N$}, \\  0 &  \text{if $2\nmid N$}. \end{cases} 
\end{eqnarray*}
Since $N$ is square free, 
$$\sum_{M|N} \mu(M) d_M^{-1}M^{n^2-f(r)} =\prod_{p|N} \left(1-d_p^{-1} p^{n^2-f(r)}\right).
$$
Therefore,
\begin{eqnarray*}
&& \dim S_\uk^\mathrm{new}(\G(N)) = \sum_{r=0}^n C_{\uk,r} \, C_N \, N^{f(r)} \prod_{p|N} \left( 1-d_p^{-1} p^{-f(r)+n^2} \right)  \\
&&\phantom{xxxxxxx}+\mu(N) \, N^{n^2}\, C_N \, d_N^{-1} \, D_1 + \begin{cases} \mu(N/2) \, (N/2)^{n^2}\, C_{N/2} \, d_{N/2}^{-1} \, D_2 & \text{if $2\mid N$}, \\  0 &  \text{if $2\nmid N$}. \end{cases} 
\end{eqnarray*}
From this we obtain the assertion. 

Now, $d_p< 1$. Hence $\prod_{p|N} \left(1-d_p^{-1} p^{n^2-f(r)}\right)< 1$. Also $d_p^{-1}< p^n$ since $\frac 1{1-p^{-1}}< p$. Therefore, $\prod_{p|N} \left(1-d_p^{-1} p^{-n^2-n}\right)> \prod_{p|N} \left(1-p^{-n^2}\right).$
Here if $n>1$, 
$$\prod_{p|N} \left(1-p^{-n^2}\right)^{-1}< \prod_{p} \left(1-p^{-n^2}\right)^{-1}=\zeta(n^2). 
$$
If $n=1$, $\prod_{p|N} \left(1-d_p^{-1} p^{-n^2-n}\right)=\prod_{p|N} \left(1-\frac 1{p(p-1)}\right)>\prod_p \left(1-\frac 1{p(p-1)}\right)$, which is the Artin constant.
\end{proof}

\section{Equidistribution theorem of Siegel cusp forms; proof of Theorem \ref{main1}}\label{ET}

By the definition in (\ref{mu}), we see that 
$$\widehat{\mu}_{K^S(N),S_1,\xi_{\uk},D_{\underline{l}}^{\rm hol}}(\widehat{h_1})=\frac{{\rm Tr}(T_{h_1}|_{S_{\uk}(\G(N))})}
{{\rm vol}(G(\Q)\bs G(\A))\cdot {\rm dim}\, \xi_\uk}.
$$
Notice that ${\rm dim}\, \xi_\uk=d_{\uk}$ (under a suitable normalization of the measure). 
Applying Theorem \ref{thm:asy} to $S_1$, we have the claim 
by the Plancherel formula of Harish-Chandra: $\widehat{\mu}^{{\rm pl}}_{S_1}(\widehat{h_1})=h_1(1)$.  

\section{Vertical Sato-Tate theorem for Siegel modular forms; Proofs of Theorem \ref{Sato-Tate-tm} and Theorem \ref{finer-ver}}\label{vertical}

Suppose that $\uk=(k_1,\ldots,k_n)$ satisfies the condition (\ref{suff-reg}). Put $\mathbb{T}=\{z\in\C\ |\ |z|=1\}$. 
For $F\in HE_{\underline{k}}(N)$, consider the cuspidal automorphic representation 
$\pi=\pi_F=\pi_\infty\otimes \otimes_p' \pi_{F,p}$ of $G(\A)$ 
associated to $F$. As discussed in the previous section, under the condition (\ref{suff-reg}), 
the $A$-parameter $\psi$ whose $A$-packet contains $\pi$ is semi-simple and 
$\pi_{F,p}$ is tempered for all $p$. Then if $p\nmid N$, $\pi_{F,p}$ is spherical, and 
we can write $\pi_{F,p}$ as $\pi_{F,p}={\rm Ind}^{G(\Q_p)}_{B(\Q_p)}\chi_p$ where $B=TU$ is  the upper Borel subgroup and 
$\chi_p$ is a unitary character on $B(\Q_p)$. For each $1\le j\le n$, put $\alpha_{jp}(\chi_p):=\chi_p(e_j(p^{-1}))$ 
(see (\ref{ejx}) for $e_j(p^{-1})$) and by temperedness we may write 
$\alpha_{jp}(\chi_p)=e^{\sqrt{-1}\theta_j},\ \theta_j\in [0,\pi]$. Let $\widehat{G}=\SO(2n+1)(\C)$ be the complex split orthogonal group over $\C$ 
associated to the anti-diagonal identity matrix. 
Let $\mathcal{L}(\pi_p):W_{\Q_p}\lra \SO(2n+1)(\C)$ 
be the local Langlands parameter given by 
$$\mathcal{L}(\pi_p)({\rm Frob}_p)=(\alpha_{1p}(\chi_p),\ldots,\alpha_{np}(\chi_p),1,\alpha_{1p}(\chi_p)^{-1},
\ldots,\alpha_{np}(\chi_p)^{-1})$$
which is called to be the $p$-Satake parameter.  
Put $a^{(i)}(\chi_p)=a^{(i)}_{F,p}(\chi_p)=\frac 12(\alpha_{ip}(\chi_p)+\alpha_{ip}(\chi_p)^{-1})=\cos\theta_i$ for $1\le i\le n$. 
Let $\widehat{G(\Q_p)}^{{\rm ur,temp}}$ be the isomorphism classes of unramified tempered representations of 
$G(\Q_p)$. 
By \cite[Lemma 3.2]{ST}, we have a topological isomorphism 
$$\widehat{G(\Q_p)}^{{\rm ur,temp}}\stackrel{\sim}{\lra} [0,\pi]^n/\mathfrak S_n=:\Omega
$$
given by 
$$\pi_p={\rm Ind}^{Sp_{2n}(\Q_p)}_{B(\Q_p)}\chi_p\mapsto 
(\arg(a^{(1)}(\chi_p)),\ldots,\arg(a^{(n)}(\chi_p)))=:(\theta_1,\ldots,\theta_n).
$$ 
We denote by $(\theta_1(\pi_{F,p}),\ldots,\theta_n(\pi_{F,p}))\in\Omega$ 
the corresponding element to $\pi_{F,p}$ under the above isomorphism. 
Let $\widehat{B}=\widehat{T}\widehat{U}$ be the upper Borel subgroup of $\widehat{G}=\SO(2n+1)(\Bbb C)$. 
Let $\Delta^+(\widehat{G})$ be the set of all positive roots in $X^\ast(\widehat{T})={\rm Hom}(\widehat{T},\GL_1)$ 
with respect to $\widehat{B}$. 
We view $(\theta_1,\ldots,\theta_n)$ as  
parameters of $\Omega$. Let $\mu^{{\rm pl,temp}}_p$ be the restriction of the Plancherel measure on 
$\widehat{G(\Q_p)}$ to $\widehat{G(\Q_p)}^{{\rm ur,temp}}$ and by abusing the notation we denote by $\mu_p=\mu^{{\rm pl,temp}}_p$ 
its pushforward to $\Omega$.   
Put 
$$t:=(e^{\sqrt{-1}\theta_1},\ldots,e^{\sqrt{-1}\theta_n},1,e^{-\sqrt{-1}\theta_1},
\ldots,e^{-\sqrt{-1}\theta_n})
$$
for simplicity. By \cite[Proposition 3.3]{ST}, we have 
$$\mu_p^{{\rm pl,temp}}(\theta_1,\ldots,\theta_n)=
W(\theta_1,\ldots,\theta_n)d\theta_1\cdots d\theta_n,$$
\begin{eqnarray*}
&&  W(\theta_1,\ldots,\theta_n)=\frac 1{(2\pi)^n} \left(1+\frac 1p\right)^{n^2} \\
&& \cdot\frac{\ds\prod_{\alpha\in \Delta^+(\widehat{G})}|1-e^{\sqrt{-1}\alpha(t)}|^2}
{\ds\prod_{\alpha\in \Delta^+(\widehat{G})}|1-p^{-1}e^{\sqrt{-1}\alpha(t)}|^2}=
\frac{\ds\prod_{i=1}^n|1-e^{\sqrt{-1}\theta_i}|^2
\prod_{1\le i<j\le n\atop \ve=\pm 1}|1-e^{\sqrt{-1}(\theta_i+\ve \theta_j)}|^2}
{\ds\prod_{i=1}^n|1-p^{-1}e^{\sqrt{-1}\theta_i}|^2
\prod_{1\le i<j\le n\atop \ve=\pm 1}|1-p^{-1}e^{\sqrt{-1}(\theta_i+\ve \theta_j)}|^2}.
\end{eqnarray*}
By letting $p\to\infty$, we recover the Sato-Tate measure 
$$\mu^{{\rm ST}}_\infty=\lim_{p\to\infty}\mu^{{\rm pl,temp}}_p = \frac 1{(2\pi)^n}\ds\prod_{i=1}^n |1-e^{\sqrt{-1}\theta_i}|^2
\prod_{1\le i<j\le n\atop \ve=\pm 1} |1-e^{\sqrt{-1}(\theta_i+\ve \theta_j)}|^2\, d\theta_1\cdots d\theta_n. 
$$

Then Theorem \ref{Sato-Tate-tm} and Theorem \ref{finer-ver} follow from Theorem \ref{main1} and Theorem \ref{non-simple}. 

\section{Standard $L$-functions of $\Sp(2n)$}\label{Standard-L}

Let $\uk=(k_1,...,k_n)$ and $F\in HE_\uk(N)$, and $\pi_F$ be a cuspidal representation of $G(\A)$ associated to $F$. 

Assume (\ref{suff-reg}) for $\uk$. 
By (\ref{AD}) and the observation there, the global $A$-packet $\Pi_\psi$ containing $\pi_F$ is associated to 
a semi-simple global $A$ parameter $\psi=\boxplus_{i=1}^r\pi_i$ 
where $\pi_i$ is an irreducible cuspidal representation of $\GL_{m_i}(\A)$. 
Then the isobaric sum $\Pi:=\boxplus_{i=1}^r\pi_i$ 
is an automorphic representation of $\GL_{2n+1}(\A)$. 
Therefore we may define 
$$L(s,\pi_F,{\rm St}):=L(s,\Pi)=\prod_{i=1}^rL(s,\pi_i).$$
Let $L_p(s,\pi_F,{\rm St}):=L(s,\Pi_p)=\prod_{i=1}^rL(s,\pi_{ip})$ be the local $p$-factor 
of $L(s,\pi_F,{\rm St})$ for each rational prime $p$.  

Let $\pi_F=\pi_\infty\otimes\otimes_p' \pi_p$. 
For $p\nmid N$, $\pi_p$ is the spherical representation of $G(\Q_p)$ with the Satake parameter $(\alpha_{1p},\dots,\alpha_{np},1,\alpha_{1p}^{-1},\dots,\alpha_{np}^{-1})$. Then
$$L_p(s,\pi_F,{\rm St})^{-1}=(1-p^{-s})\prod_{i=1}^n (1-\alpha_{ip}p^{-s})(1-\alpha_{ip}^{-1}p^{-s}).
$$
We define the conductor $q(F)$ of $F$ to be the product of the conductors $q(\pi_i)$ of $\pi_i$ 
($1\le i\le r$). 
\begin{thm}
Let $F\in HE_\uk(N)$. Then the standard L-function $L(s,\pi_F,{\rm St})$ has a meromorphic continuation to all of $\Bbb C$. 
 Let
$$\Lambda(s,\pi_F,{\rm St})=q(F)^\frac s2  \, L_\infty(s,\pi_F,{\rm St}) \, L(s,\pi_F,{\rm St}),
$$
where 
$L_\infty(s,\pi_F,{\rm St})=\Gamma_{\Bbb R}(s+\epsilon) \Gamma_{\Bbb C}(s+k_1-1)\cdots \Gamma_{\Bbb C}(s+k_n-n)$, 
$\epsilon=\begin{cases} 0, &\text{if $n$ is even}\\ 1, &\text{if $n$ is odd}\end{cases}$, and $\Gamma_\Bbb R(s)=\pi^{-\frac s2}\Gamma(\frac s2)$, $\Gamma_\Bbb C(s)=2(2\pi)^{-s}\Gamma(s)$.
Then
$$\Lambda(s,\pi_F,{\rm St})=\epsilon(F)\Lambda(1-s,\pi_F,{\rm St}),
$$
where $\epsilon(F)\in\{\pm 1\}$. 
\end{thm}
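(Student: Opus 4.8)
The plan is to reduce the statement entirely to the analytic theory of standard (principal, Godement--Jacquet) $L$-functions on $\GL_m$, together with the Arthur classification already recorded in Theorem \ref{Ram}. Under (\ref{suff-reg}), Theorem \ref{Ram} says that $\pi_F$ lies in a global $A$-packet $\Pi_\psi$ with $\psi=\boxplus_{i=1}^r\pi_i$ semi-simple, each $\pi_i$ an irreducible unitary cuspidal automorphic representation of $\GL_{m_i}(\A)$ with $\sum_{i=1}^r m_i=2n+1$; moreover condition (3) (applied with $d_i=1$) makes every $\pi_i$ orthogonal, so in particular $\widetilde{\pi_i}\simeq\pi_i$. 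By the definition of the standard $L$-function we have $L(s,\pi_F,{\rm St})=\prod_{i=1}^r L(s,\pi_i)$, and $q(F)=\prod_{i=1}^r q(\pi_i)$; once the archimedean factor is identified (third paragraph) we will also have $L_\infty(s,\pi_F,{\rm St})=\prod_{i=1}^r L_\infty(s,\pi_i)$. So it suffices to treat each factor separately and multiply.

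For each $i$, Godement--Jacquet theory provides the meromorphic continuation of $L(s,\pi_i)$ to $\C$ (entire when $m_i\geq 2$, and when $m_i=1$ with $\pi_i$ a nontrivial quadratic character; at worst simple poles at $s=0,1$ when $\pi_i=\trep$), whence the asserted meromorphic continuation of $L(s,\pi_F,{\rm St})$. Writing $\Lambda(s,\pi_i)=q(\pi_i)^{s/2}L_\infty(s,\pi_i)L(s,\pi_i)$, the functional equation is $\Lambda(s,\pi_i)=\epsilon(\tfrac12,\pi_i)\,\Lambda(1-s,\widetilde{\pi_i})$ with $|\epsilon(\tfrac12,\pi_i)|=1$. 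Using $\widetilde{\pi_i}\simeq\pi_i$ this reads $\Lambda(s,\pi_i)=\epsilon(\tfrac12,\pi_i)\,\Lambda(1-s,\pi_i)$, and applying it twice gives $\epsilon(\tfrac12,\pi_i)^2=1$, so $\epsilon(\tfrac12,\pi_i)\in\{\pm1\}$. Setting $\Lambda(s,\pi_F,{\rm St})=\prod_i\Lambda(s,\pi_i)=q(F)^{s/2}L_\infty(s,\pi_F,{\rm St})L(s,\pi_F,{\rm St})$ and multiplying over $i$,
\[
\Lambda(s,\pi_F,{\rm St})=\Big(\prod_{i=1}^r\epsilon(\tfrac12,\pi_i)\Big)\,\Lambda(1-s,\pi_F,{\rm St}),
\]
which is the functional equation with $\epsilon(F)=\prod_{i=1}^r\epsilon(\tfrac12,\pi_i)\in\{\pm1\}$.

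It remains to check that $\prod_{i=1}^r L_\infty(s,\pi_i)$ is the factor $\Gamma_{\Bbb R}(s+\epsilon)\prod_{j=1}^n\Gamma_{\Bbb C}(s+k_j-j)$ in the statement. The archimedean $L$-parameter of the isobaric representation $\boxplus_i\pi_i$ of $\GL_{2n+1}(\R)$ is the direct sum of those of the $\pi_{i,\infty}$; since each $\pi_{i,\infty}$ is regular algebraic (Theorem \ref{Ram}), and the matching of infinitesimal characters $c(\psi_\infty)=c(\sigma_\uk)$ together with self-duality pins this parameter down uniquely, it equals the composite of the Langlands parameter of $\pi_\infty=\sigma_\uk$ with the standard embedding $\SO(2n+1,\C)\hookrightarrow\GL_{2n+1}(\C)$. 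That composite is $\bigoplus_{j=1}^n\phi_{k_j-j}\ \oplus\ {\rm sgn}^{\epsilon}$, where $\phi_m={\rm Ind}_{W_{\C}}^{W_{\R}}\big(z\mapsto (z/\bar z)^{m/2}\big)$ is the weight-$2m$ discrete-series parameter of $\GL_2(\R)$ and the one-dimensional summand is the quadratic character ${\rm sgn}^{\epsilon}$ of $W_{\R}$ with $\epsilon\in\{0,1\}$, $\epsilon\equiv n\pmod 2$. Its $L$-factor is precisely $\Gamma_{\Bbb R}(s+\epsilon)\prod_{j=1}^n\Gamma_{\Bbb C}(s+k_j-j)$, as required; this simultaneously confirms that $q(F)$ and $L_\infty(s,\pi_F,{\rm St})$ fit the displayed $\Lambda$.

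The routine inputs are Arthur's classification (through Theorem \ref{Ram}), the Godement--Jacquet functional equation, and the multiplicativity of $L$- and $\epsilon$-factors for isobaric sums. The one point needing care is the archimedean computation of the third paragraph: extracting the precise Gamma factor, and in particular the parity $\epsilon\equiv n\pmod 2$ of the one-dimensional piece, from the explicit shape of the holomorphic discrete series $L$-parameter, and checking that the analytic normalization there (shifts $s+k_j-j$) is consistent with the $s\leftrightarrow1-s$ functional equation used for the $\GL_m$ factors. This is classical --- it recovers the known archimedean factor of the standard $L$-function of a (vector-valued) Siegel modular form, cf.\ the computations of Andrianov and B\"ocherer --- so I would simply cite it.
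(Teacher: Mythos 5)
Your proposal follows essentially the same route as the paper: pass to the isobaric $\GL_{2n+1}$ transfer $\Pi=\boxplus_i\pi_i$ (via Theorem \ref{Ram}), invoke the Godement--Jacquet / Rankin--Selberg functional equation together with self-duality, and identify the archimedean factor with $\Gamma_{\R}(s+\epsilon)\prod_j\Gamma_{\C}(s+k_j-j)$ (the paper simply cites Kozima for this last step, as you ultimately do). One small normalization slip: with $\phi_m={\rm Ind}_{W_\C}^{W_\R}(z\mapsto(z/\bar z)^{m/2})$ the $L$-factor is $\Gamma_\C(s+m/2)$, so the piece needed to produce $\Gamma_\C(s+k_j-j)$ is $\phi_{2(k_j-j)}$, not $\phi_{k_j-j}$; equivalently you should define $\phi_m$ with exponent $m$ rather than $m/2$. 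Since you defer the archimedean computation to the literature anyway, this does not affect the validity of the argument.
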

\begin{proof} It follows from the functional equation of $L(s,\Pi)$ by noting that $\Pi$ is self-dual, and $L(s,\Pi_\infty)=L_\infty(s,\pi_F,{\rm St})$ is the local $L$-function attached to the holomorphic discrete series of the lowest weight $\uk$ (cf. \cite{Kozima}).
\end{proof}

The epsilon factor $\epsilon(F)$ turns out to be always 1. 

\begin{prop} Let $\pi_F$ be associated to a semi-simple $A$-parameter. Then 
$\epsilon(F)=1$.
\end{prop}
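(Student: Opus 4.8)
The plan is to reduce the claim to the triviality of the global root number of each constituent of the isobaric sum attached to $\pi_F$, and then to obtain the latter from Deligne's formula for orthogonal local constants together with the reciprocity law. First, by Theorem~\ref{Ram} the parameter $\psi=\boxplus_{i=1}^r\pi_i$ is semi-simple, so $\Pi=\boxplus_{i=1}^r\pi_i$ is a self-dual isobaric automorphic representation of $\GL_{2n+1}(\A)$ with $L(s,\pi_F,{\rm St})=\prod_{i=1}^rL(s,\pi_i)$. Since $q(F)=\prod_i q(\pi_i)$ by definition, and the archimedean factor satisfies $L_\infty(s,\pi_F,{\rm St})=\prod_i L_\infty(s,\pi_i)$ because $\Pi_\infty$ has the same infinitesimal character as $\sigma_{\uk}$ and the standard archimedean factor of the holomorphic discrete series is the one recorded above (cf.\ \cite{Kozima}), the completed functional equation factors as $\Lambda(s,\pi_F,{\rm St})=\prod_{i=1}^r\Lambda(s,\pi_i)$. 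As each $\pi_i$ is self-dual, $\Lambda(s,\pi_i)=\epsilon(\tfrac12,\pi_i)\,\Lambda(1-s,\pi_i)$ with $\epsilon(\tfrac12,\pi_i)\in\{\pm1\}$, and therefore $\epsilon(F)=\prod_{i=1}^r\epsilon(\tfrac12,\pi_i)$. It then suffices to prove $\epsilon(\tfrac12,\pi_i)=1$ for every $i$.

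Because $\psi$ is semi-simple, $d_i=1$ for all $i$, so condition (3) in the definition of a global $A$-parameter forces each $\pi_i$ to be of \emph{orthogonal type}: $L(s,\pi_i,\Sym^2)$ has a pole at $s=1$. Via the local Langlands correspondence for $\GL_{m_i}$ this means that the Weil--Deligne parameter $\rho_{i,v}$ of $\pi_{i,v}$ carries a non-degenerate symmetric invariant bilinear form at each place $v$, and that $\det\rho_{i,v}$ corresponds to the central character $\omega_{\pi_i,v}$, which is quadratic since $\pi_i\cong\widetilde\pi_i$. The key input is Deligne's theorem on the local constants of orthogonal representations: at each $v$ one has $\epsilon(\tfrac12,\rho_{i,v})=\epsilon(\tfrac12,\omega_{\pi_i,v})\cdot w_2(\rho_{i,v})$, where $w_2(\rho_{i,v})\in\{\pm1\}$ is the second Stiefel--Whitney class of the orthogonal form. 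Taking the product over all places of $\Q$, on one hand $\prod_v\epsilon(\tfrac12,\omega_{\pi_i,v})=\epsilon(\tfrac12,\omega_{\pi_i})=1$, because the completed $L$-function of a quadratic Hecke character of $\Q^\times\bs\A^\times$ has sign $+1$ (the classical evaluation of Gauss sums, combined with the archimedean $\Gamma$-factor); on the other hand $\prod_v w_2(\rho_{i,v})=1$ by the reciprocity law for the $2$-torsion of the Brauer group of $\Q$, the local classes $w_2(\rho_{i,v})$ being the localisations of a single global class, namely the obstruction to a self-dual structure on $\pi_i$ rational over $\Q$, which vanishes since $\pi_i$ is automorphic over $\Q$. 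Hence $\epsilon(\tfrac12,\pi_i)=\prod_v\epsilon(\tfrac12,\rho_{i,v})=1$, and so $\epsilon(F)=1$.

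The step I expect to be the main obstacle is making the Deligne--reciprocity argument rigorous in the automorphic (rather than motivic) setting: one must transport Deligne's theorem through local Langlands at every place, including the archimedean place where $\rho_{i,\infty}$ is an explicit orthogonal representation of $W_\R\times\SL_2(\C)$, and one must justify the product formula for $(w_2(\rho_{i,v}))_v$ although there need not be a global Galois representation cutting out $\Pi$. In practice the cleanest course is to quote the known fact that a self-dual cuspidal automorphic representation of $\GL_m(\A)$ over $\Q$ of orthogonal type has trivial global root number, which packages exactly this argument. Alternatively, one may descend $\pi_i$ along Arthur's classification to a generic cuspidal representation of the associated (quasi-)split orthogonal or symplectic group $H_i$ and read $\epsilon(\tfrac12,\pi_i)$ off the Langlands--Shahidi functional equation of the standard $L$-function of $H_i$, or compute it by the doubling method of Piatetski-Shapiro--Rallis; in all cases the remaining bookkeeping with conductors, archimedean factors and the finitely many ramified places is routine.
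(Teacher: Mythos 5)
Your reduction to $\epsilon(F)=\prod_{i=1}^r\epsilon(\tfrac12,\pi_i)$ is fine, but the claim that each individual factor $\epsilon(\tfrac12,\pi_i)$ equals $1$ is too strong and is not true in general. Lapid's theorem (the reference \cite{La} cited by the paper) gives $\epsilon(\tfrac12,\pi_i)=\omega_i(-1)$ for an orthogonal cuspidal $\pi_i$ of $\GL_{m_i}(\A)$, where $\omega_i$ is the central character. Since $\omega_i$ may well be an \emph{odd} quadratic Hecke character when $m_i$ is even (for instance if $\pi_{i,\infty}$ is built from an odd number of $2$-dimensional induced pieces of $W_\R$, so that $\omega_{i,\infty}=\mathrm{sgn}$), individual root numbers $\epsilon(\tfrac12,\pi_i)=-1$ can occur. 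Concretely, your intermediate assertion that $\prod_v w_2(\rho_{i,v})=1$ is what fails: combining Deligne's formula with $\epsilon(\tfrac12,\omega_i)=1$ would give $\prod_v w_2(\rho_{i,v})=\epsilon(\tfrac12,\pi_i)=\omega_i(-1)$, which need not be $1$. The hand-waving about a ``global class whose obstruction vanishes'' does not rescue this — there is no global Galois representation attached to $\pi_i$ in general, and even in the Galois case the total second Stiefel--Whitney class of an orthogonal representation is not automatically trivial.

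The argument that actually closes the gap — and the one the paper uses — is global, not local-at-each-$\pi_i$. Condition (5) in the definition of a global $A$-parameter forces $\prod_{i=1}^r\omega_i=\trep$. Combining this with Lapid's identity gives
$$\epsilon(F)=\prod_{i=1}^r\omega_i(-1)=\Bigl(\prod_{i=1}^r\omega_i\Bigr)(-1)=\trep(-1)=1.$$
You never invoke the constraint $\prod_i\omega_i=\trep$, and without it there is no way to control the sign; that constraint is exactly what makes the signs $\omega_i(-1)$ cancel in pairs even though they are individually unconstrained. So the proof as written has a genuine gap, and the missing ingredient is the central-character identity built into the $A$-parameter.
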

\begin{proof} Recall the global $A$-parameter $\psi=\boxplus_{i=1}^r\pi_i$. Let $\omega_i$ be the central 
character of $\pi_i$. 
Since $\pi_i$ is orthogonal, its epsilon factor is $\omega_i(-1)$ by \cite[Theorem 1]{La}. 
Hence $\epsilon(F)=\ds\prod_{i=1}^r\omega_i(-1)=\Big(\prod_{i=1}^{r}\omega_i\Big)(-1)=\trep(-1)=1$ by 
the condition on the central character. 
\end{proof}

\begin{thm}\label{stan-conductor}
For any $F\in HE_\uk(N)$, the conductor $q(F)$ satisfies $q(F)\le N^{2n+1}$.
If $F\in HE^{{\rm new}}_\uk(N)$, then $q(F)\ge \max\left\{N\ds\prod_{p\mid N} p^{-1},\ \prod_{p\mid N}p \right\}.$
So if $F\in HE^{{\rm new}}_\uk(N)$, $q(F)\geq N^{\frac 12}$.
\end{thm}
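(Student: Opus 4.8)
The plan is to argue place by place. Since the conductor is multiplicative under isobaric sums, $q(F)=\prod_{i=1}^r q(\pi_i)=\prod_p p^{a_p(\Pi)}$, where $\Pi=\boxplus_{i=1}^r\pi_i$ is the transfer of $\pi_F$ to $\GL_{2n+1}(\A)$ and $a_p(\Pi)=\sum_i a_p(\pi_i)$ is the exponent of the Artin conductor of the $(2n+1)$-dimensional local parameter $\bigoplus_i\mathcal{L}(\pi_{i,p})=\mathcal{L}(\pi_{F,p})$. Because $F\in HE_\uk(N)$, the component $\pi_{F,p}$ is spherical for every $p\nmid N$, so $a_p(\Pi)=0$ there; thus $q(F)=\prod_{p\mid N}p^{a_p(\Pi)}$ and the statement reduces to two-sided estimates for $a_p(\Pi)$ in terms of $v_p(N)$.

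For the upper bound I would use that $\pi_{F,p}$ has a nonzero $K_p(p^{v_p(N)})$-fixed vector, hence has Moy--Prasad depth at most $v_p(N)-1$; since the local Langlands correspondence for $\Sp(2n)$ does not raise depth, $\mathcal{L}(\pi_{F,p})$ has depth $\le v_p(N)-1$, and a $(2n+1)$-dimensional Weil--Deligne representation of depth $\le k-1$ has Artin conductor exponent at most $(2n+1)k$. When $N$ is odd one can alternatively transfer via Proposition~\ref{transfer}: then $\Pi$ has a $K^{\GL_{2n+1}}(N)$-fixed vector, each $\pi_i$ has a $K^{\GL_{m_i}}(N)$-fixed vector, so $a_p(\pi_i)\le v_p(N)$ by Casselman's newvector theory, and summing over the $r\le 2n+1$ blocks gives the same bound. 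Multiplying over $p\mid N$ yields $q(F)\le \prod_{p\mid N}p^{(2n+1)v_p(N)}=N^{2n+1}$.

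For the lower bounds, assume $F\in HE^{\mathrm{new}}_\uk(N)$ and fix $p\mid N$. Newness forces $\pi_F^{K(N/p)}=0$, and since the factors of $\pi_F$ away from $p$ still have nonzero $K_q(q^{v_q(N)})$-fixed spaces, this gives $\pi_{F,p}^{K_p(p^{v_p(N)-1})}=0$; in particular $\pi_{F,p}$ is not spherical. A non-spherical representation of $\Sp(2n,\Q_p)$ has a ramified parameter --- either its restriction to inertia is nontrivial, or, in the Iwahori-spherical case, its monodromy operator is nonzero --- so $a_p(\Pi)\ge 1$, already yielding $q(F)\ge \prod_{p\mid N}p$. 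Running the depth comparison of the previous paragraph in reverse: if $a_p(\Pi)=c$ then $\mathcal{L}(\pi_{F,p})$ has depth $\le c$, hence $\pi_{F,p}$ (whose depth equals that of the tempered parameter) has a $K_p(p^{c+2n+1})$-fixed vector; since $K_p(p^{v_p(N)-1})\subseteq K_p(p^{c+2n+1})$ would then contradict $\pi_{F,p}^{K_p(p^{v_p(N)-1})}=0$, we must have $c+2n+1\ge v_p(N)$, i.e. $a_p(\Pi)\ge v_p(N)-(2n+1)$, whence $q(F)\ge N\prod_{p\mid N}p^{-(2n+1)}$.

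Finally, combining the two lower bounds --- raising $q(F)\ge\prod_{p\mid N}p$ to the power $2n+1$ and multiplying by $q(F)\ge N\prod_{p\mid N}p^{-(2n+1)}$ --- gives $q(F)^{2n+2}\ge N$, i.e. $q(F)\ge N^{1/(2n+2)}$. The main obstacle is the comparison, in both directions, between the principal-congruence level of $\pi_{F,p}$ as a representation of $\Sp(2n,\Q_p)$ and the Artin conductor of its $(2n+1)$-dimensional Langlands parameter: for $\GL_m$ these coincide by newvector theory, but over $\Sp(2n)$ one must go through preservation of depth under the local Langlands correspondence for classical groups together with a comparison of Moy--Prasad filtration subgroups attached to the various vertices of the building with the $K_p(p^k)$, which is precisely where the (harmless) slack reflected by the exponent $2n+1$ enters; one also has to treat the Iwahori-spherical case with nonzero monodromy separately in the first lower bound.
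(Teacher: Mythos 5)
Your overall skeleton matches the paper's: transfer $\pi_F$ to $\Pi=\boxplus\pi_i$ on $\GL_{2n+1}$, use multiplicativity of the conductor and that $\pi_{F,p}$ is spherical off $N$, get the upper bound from the existence of a $K(N)$-fixed vector, get the easy lower bound from newness forcing $\pi_{F,p}$ to be non-spherical at $p\mid N$, and finish by combining the two lower bounds. But your treatment of the second lower bound $q(F)\ge N\prod_{p\mid N}p^{-(2n+1)}$ is genuinely different from the paper's and is the weak link.

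The paper never runs a ``depth reversal'' on $\Sp(2n)$. For the lower bound it applies Proposition~\ref{transfer} to move the vanishing $\pi_{F,p}^{K_p(p^{e_p-1})}=0$ to the $\GL_{2n+1}$ side, obtaining $\Pi_p^{K^{\GL_{2n+1}}(p^{e_p-1})}=0$, and then invokes the Miyauchi--Yamauchi result \cite[Theorem~1.2]{MY}, which compares the principal congruence level of $\Pi_p$ with its conductor and directly gives $q(\Pi_p)\ge p^{m_i(e_p-1)}\ge p^{e_p-1}$. This is cleaner because on the $\GL_m$ side one has a precise newvector/level-conductor dictionary, which is exactly what \cite{MY} supplies. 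Your argument instead asserts, on the $\Sp(2n)$ side, that ``if $a_p(\Pi)=c$ then $\pi_{F,p}$ has a $K_p(p^{c+2n+1})$-fixed vector.'' That is the crux, and it is not established: Moy--Prasad depth controls fixed vectors under parahoric filtration subgroups $G_{x,r}$ at \emph{some} point $x$ of the building, not under the hyperspecial-based principal congruence subgroups $K_p(p^k)$, and the exponent shift $+2n+1$ is an ad hoc placeholder with no cited source. You also rely on depth preservation of the LLC for $\Sp(2n)$ at every prime, while the reference \cite{Oi} in the bibliography only gives this for large residue characteristic. You flag this as ``the main obstacle'' at the end, so you are aware of the gap; but as written the step is not a proof, whereas the paper closes it by citing a concrete theorem on the $\GL$ side.

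Two smaller points. First, in your upper bound you write ``$a_p(\pi_i)\le v_p(N)$ by Casselman's newvector theory''; what a $K^{\GL_{m_i}}(p^{v_p(N)})$-fixed vector gives is depth $\le v_p(N)-1$, hence $a_p(\pi_i)\le m_i\,v_p(N)$, not $\le v_p(N)$. Summing over $i$ still yields $(2n+1)v_p(N)$, so the conclusion survives, but the intermediate claim is wrong as stated. Second, for the easy lower bound $q(F)\ge\prod_{p\mid N}p$ your argument is correct and matches the paper's (the paper simply calls it ``clear''), provided one also notes that the non-spherical assertion passes to $\Pi_p$ via the transfer, so that $a_p(\Pi)\ge1$.
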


\begin{proof} 
Let $\pi_F$ be associated to 
a semi-simple global $A$ parameter $\psi=\boxplus_{i=1}^r\pi_i$ 
where $\pi_i$ is an irreducible cuspidal representation of $\GL_{m_i}(\A)$, and let 
$\Pi:=\boxplus_{i=1}^r\pi_i$.
Let $\Pi=\Pi_\infty\otimes\otimes_p' \Pi_p$. By Proposition \ref{transfer}, 
$\Pi$ has a non-zero fixed vector by $K^{GL_{2n+1}}(p^{e_p})$ where 
$e_p={\rm ord}_p(N)$.  
As in the proof of \cite[Lemma 8.1]{KWY}, 
it implies ${\rm depth}(\Pi_p)\leq e_p-1$. Hence $q(\Pi_p)\le p^{(2n+1)e_p}$ by \cite[Proposition 2.2]{LR}. 
Therefore, $q(F)\le N^{2n+1}$.

If $F\in HE^{{\rm new}}_\uk(N)$, by Definition \ref{newspace1},  
it is not fixed by $K^{GL_{2n+1}}(p^{e_p-1})$ for each $p|N$. By \cite[Theorem 1.2]{MY}, 
we have $q(\Pi_p)\ge p^{m_i(e_p-1)}$ for some $i$. 
In particular, $q(\Pi_p)\ge p^{e_p-1}$ for each $p|N$. Hence $q(F)\geq N\prod_{p|N} p^{-1}$.
 It is clear that $q(\Pi_p)\geq p$ if $p|N$. Hence 
$$q(F)\ge
\max\left\{ N\cdot\prod_{p|N} p^{-1},\, \prod_{p|N} p \right\}.
$$
Now, $q(F)^2= q(F)\cdot q(F)\geq N$. Hence our result follows.
\end{proof}

\begin{prop} \label{pole} Keep the assumptions on $N$ as in Proposition \ref{fixedV}. 
Let $F\in HE_\uk(N)$. Then $L(s,\pi_F,{\rm St})$ has a pole at $s=1$ if and only if  $\pi_F$ is associated to 
a semi-simple global $A$-parameter $\psi=1\boxplus \pi_1\boxplus\cdots \boxplus \pi_r$ 
where $\pi_i$ is an orthogonal irreducible cuspidal representation of $\GL_{m_i}(\A)$, such that if $m_i=1$, $\pi_i$ is a non-trivial quadratic character.
Let $HE_\uk(N)^0$ be the subset of $HE_\uk(N)$ such that $L(s,\pi_F,{\rm St})$ has a pole at $s=1$. Then 
$|HE_\uk(N)^0|=O(N^{2n^2-n+\epsilon})$. So $\frac {|HE_\uk(N)^0|}{|HE_\uk(N)|}=O(N^{-2n+\epsilon})$.
\end{prop}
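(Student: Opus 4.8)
The plan is to treat the equivalence and the asymptotic count separately.

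\emph{The pole criterion.} By Theorem \ref{Ram} the global $A$-parameter of $\pi_F$ is semi-simple, say $\psi=\boxplus_{i=1}^r\pi_i$ with $\pi_i$ self-dual unitary cuspidal on $\GL_{m_i}(\A)$ and $\sum_i m_i=2n+1$, so $L(s,\pi_F,{\rm St})=\prod_{i=1}^r L(s,\pi_i)$. The standard (Godement--Jacquet) $L$-function of a unitary cuspidal representation of $\GL_{m_i}(\A)$ is entire, with the single exception that $m_i=1$ and $\pi_i=1$ is the trivial Hecke character $\trep$, in which case $L(s,\pi_i)=\zeta(s)$ has a simple pole at $s=1$. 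Since all $d_i=1$, condition $(6)$ forces the $\pi_i$ to be pairwise distinct, so at most one of them equals $\trep$; hence $L(s,\pi_F,{\rm St})$ has a pole at $s=1$ exactly when precisely one $\pi_i$ is $\trep$, and then the pole is simple. Labelling that factor as the leading ``$1$'', condition $(3)$ (with $d_i=1$ odd) makes each remaining $\pi_i$ orthogonal, and a one-dimensional orthogonal cuspidal representation different from $\trep$ is a non-trivial quadratic character; this is the asserted equivalence.

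\emph{Reduction to $\SO(n,n)$.} For $F\in HE_\uk(N)^0$ write $\psi=1\boxplus\psi'$ with $\psi'=\boxplus_{i=1}^r\pi_i$ a $2n$-dimensional orthogonal isobaric automorphic representation of $\GL_{2n}(\A)$; condition $(5)$ forces its determinant $\prod_{i=1}^r\omega_i$ to be trivial, so $\psi'$ is a discrete $A$-parameter for the split even orthogonal group $H=\SO(n,n)$, whose dual $\widehat H=\SO(2n,\C)$ sits inside $\widehat G=\SO(2n+1,\C)$ as the stabiliser of a non-isotropic vector. Thus $\psi=1\boxplus\psi'$ is the transfer of $\psi'$ along the elliptic endoscopic datum $(G,H)$, and every $F\in HE_\uk(N)^0$ is a functorial lift from $H$ (in particular $HE_\uk(N)^0\subset HE_\uk(N)^{ng}$). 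One should not route the bound on $\dim\pi_{F,f}^{K(N)}$ through the twisted transfer to $\GL_{2n+1}$ as in Proposition \ref{second-est}: that inflates the count by $d_{P_{(1,2n)}}(N)$, of size $N^{2n}$, coming from the flag variety $\GL_{2n+1}/P_{(1,2n)}$, and only recovers the $ng$-bound $O(N^{2n^2+n-1+\epsilon})$. Instead I would use the transfer of the level-$N$ idempotent $\vol(K^H(N))^{-1}1_{K^H(N)}$ for the pair $(\Sp(2n),\SO(n,n))$ --- available under the present hypotheses on $N$, either from the \cite{GV}-type results underlying Propositions \ref{transfer} and \ref{transfer-ort} or by combining those twisted transfers with the compatibility $1\boxplus(\,\cdot\,):\GL_{2n}\hookrightarrow\GL_{2n+1}$ --- together with Arthur's character identities, absorbing the local $A$-packet sizes at $p\mid N$ (each at most $2^{2n}$, so a factor $2^{2n\omega(N)}=O(N^\epsilon)$, exactly as in the proof of Proposition \ref{second-est}).

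\emph{The dimension bound and the ratio.} These inputs give
\[
|HE_\uk(N)^0|\ \ll\ N^\epsilon\sum_{\psi'}\ \sum_{\tau\in\Pi^H_{\psi'}}m(\tau)\,\dim\tau_f^{K^H(N)}\ \le\ N^\epsilon\,\dim\bigl(L^2_{{\rm disc}}(H(\Q)\bs H(\A))^{K^H(N)}\bigr)_{[\uk]},
\]
where $\psi'$ runs over the $2n$-dimensional orthogonal isobaric parameters of trivial determinant whose archimedean infinitesimal character matches $\sigma_\uk$, $\tau$ over the corresponding $A$-packet on $H$ with the right component at $\infty$, and $(\,\cdot\,)_{[\uk]}$ denotes the part of the discrete spectrum with that fixed infinitesimal character; by \eqref{suff-reg} this is a single Harish-Chandra parameter, so the implied constants below depend only on $\uk$. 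By the uniform dimension bound for automorphic forms on a fixed reductive group with fixed infinitesimal character and principal congruence level (\cite{Savin}, as used for the second bullet of Proposition \ref{trivial-case}), the right-hand side is $\ll\vol(K^H(N))^{-1}N^\epsilon\sim c\,N^{\dim H+\epsilon}$; since $\dim H=\dim\SO(2n)=n(2n-1)=2n^2-n$ we obtain $|HE_\uk(N)^0|=O(N^{2n^2-n+\epsilon})$. Finally, by the dimension formula \eqref{dimension} one has $|HE_\uk(N)|=C_\uk C_N N^{2n^2+n}+O_\uk(N^{2n^2})$ with $C_N\gg 1$, hence $|HE_\uk(N)|\gg N^{2n^2+n}$ and the stated ratio $O(N^{-2n+\epsilon})$ follows.

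\emph{Main obstacle.} The crux is the transfer of the level-$N$ idempotent for the endoscopic pair $(\Sp(2n),\SO(n,n))$ and the attendant bookkeeping of local $A$-packet multiplicities at the ramified primes; one may alternatively decompose $\psi'$ further and apply Propositions \ref{trivial-case} and \ref{fixedV} at each constituent, which is why the restrictive hypotheses on $N$ from Proposition \ref{fixedV} are carried over. Once the transfer is in hand, the estimate reduces to a dimension bound already present in the paper.
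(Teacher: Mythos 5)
Your proposal is correct, and in fact more careful than the paper's very terse proof. The paper simply writes ``This follows from the proof of Theorem~\ref{non-simple}, by noting that partitions $\underline{m}=(m_1,\dots,m_r)$ of $2n$ contribute to $HE_\uk(N)^0$.''\ Taken literally, the machinery of Theorem~\ref{non-simple} routes the dimension count through the twisted transfer $\Sp(2n)\to\GL_{2n+1}$ of Proposition~\ref{second-est}; as you observe, for the dominant parameter $\psi=\trep\boxplus\pi$ with $\pi$ cuspidal on $\GL_{2n}$, this route carries the flag-variety factor $d_{P_{(1,2n)}}(N)\asymp N^{2n}$ and can do no better than $O(N^{2n^2+n-1+\epsilon})$. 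To land on the claimed $O(N^{2n^2-n+\epsilon})=O(N^{\dim\SO(n,n)+\epsilon})$ one must instead go directly through the endoscopic pair $(\Sp(2n),\SO(n,n))$, which is exactly what you propose; this also matches what the paper implicitly does for $n=2$ in the final remark, where it refers the pole count to Proposition~\ref{trivial-case} rather than to $\GL_5$. Your pole criterion (uniqueness of the constituent $\trep$ among pairwise distinct self-dual cuspidal $\pi_i$, entirety of each $L(s,\pi_i)$ for $\pi_i\neq\trep$, and identification of the residual $2n$-dimensional parameter as a discrete parameter for $\SO(n,n)$ with trivial determinant) is complete; the Savin limit-multiplicity input and the ratio from \eqref{dimension} are exactly those used in the paper. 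The one genuine point you flag correctly remains a point: the transfer of the level-$N$ idempotent for the \emph{standard} endoscopic datum $(\Sp(2n),\SO(n,n))$ is not literally one of Propositions~\ref{transfer} or \ref{transfer-ort} (both of which concern twisted endoscopy into $\GL$), and the bookkeeping of the signs $\langle s,\cdot\rangle$ in the endoscopic character identity deserves a sentence or two --- one should pass through the stable linear combinations as in Arthur so as to deduce $\dim\pi_f^{K(N)}\le\sum_{\tau}\dim\tau_f^{K^H(N)}$ from the packet identity. Apart from making that last reduction explicit, your argument is a valid and, in my view, clearer rendering of what the published proof must mean.
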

This proves \cite[Hypothesis 11.2]{ST} in our family.
\begin{proof} This follows from the proof of Theorem \ref{non-simple}, by noting that partitions $\underline{m}=(m_1,...,m_r)$ of $2n$ contribute to $HE_\uk(N)^0$.
\end{proof}

In \cite{B1}, B\"ocherer gave the relationship between Hecke operators and $L$-functions for level one and scalar-valued Siegel modular forms and it is extended by Shimura \cite{Shimura} to more general setting.

Let $\underline{a}=(a_1,...,a_n)$, $0\leq a_1\leq \cdots\leq a_n$, and 
$D_{p,\underline{a}}=\diag(p^{a_1},\dots,p^{a_n})$.
Let $F$ be an eigenform in $HE_{\uk}(N)$ with respect to the Hecke operator $T(D_{p,\underline{a}})$ 
for all $p\nmid N$, 
and let $\lambda(F,D_{p,\underline{a}})$ be the eigenvalue. 

Then we have the following identity \cite[Theorem 2.9]{Shimura}: 
\begin{equation}\label{Bo}
\sum_{\underline{a}} \lambda(F,D_{p,\underline{a}}) X^{\sum_{i=1}^n a_i}=\frac {(1-X)}{(1-p^n X)} \prod_{i=1}^n \frac {(1-p^{2i}X^2)}{(1-\alpha_{ip}p^n X)(1-\alpha_{ip}^{-1} p^n X)},
\end{equation}
where $\underline{a}=(a_1,...,a_n)$ runs over $0\leq a_1\leq \cdots\leq a_n$.

Let $\underline{m}=(m_1,...,m_n)$, $m_1|m_2|\cdots | m_n$, and $D_{\underline{m}}=\diag(m_1,...,m_n)$, and let $\lambda(F,D_{\underline{m}})$ be the eigenvalue of the Hecke operator $T(D_{\underline{m}})$. 
Let
$$L^N(s,F)=\sum_{\underline{m}, \, (m_n,N)=1} \lambda(F,D_{\underline{m}}) \det(D_{\underline{m}})^{-s}.
$$
Then
$$L^N(s,F)=\prod_{p\nmid N} L(s,F)_p,\quad L(s,F)_p=\sum_{\underline{a}} \lambda(F,D_{p,\underline{a}}) \det(D_{p,\underline{a}})^{-s}.
$$
It converges for ${\rm Re}(s)> 2n+\frac {k_1+\cdots+k_n}n+1$.

Hence we have 
$$\zeta^N(s)\left[\prod_{i=1}^n \zeta^N(2s-2i)\right] L^N(s,F)=L^N(s-n,\pi_F, {\rm St}),
$$
where $L^N(s,\pi_F,{\rm St})=\prod_{p\nmid N} L_p(s,\pi_F,{\rm St})$, and $\zeta^N(s)=\prod_{p\nmid N} (1-p^{-s})^{-1}$.

The central value of $L^N(s,F)$ is at $s=n+\frac 12$, and $L^N(s,F)$ has a zero at $s=n+\frac 12$ since $L^N(s,\pi_F,{\rm St})$ is holomorphic at $s=\frac 12$. Theorem \ref{thm:asy} implies

\begin{thm}\label{lambda} For $\underline{m}=(m_1,...,m_n)$, $m_1|m_2|\cdots | m_n$ 
with $m_n>1$ and $(m_n,N)=1$,
$$\frac{1}{|HE_{\uk}(N)|}\sum_{F\in HE_{\uk}(N)} \lambda(F,D_{\underline{m}})=O(m_n^{a}N^{-n}),
$$
for some constant $a$. 
\end{thm}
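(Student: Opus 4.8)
\emph{Proof proposal.} The plan is to realize the left side as a normalized trace of an adelic Hecke operator and then quote Theorem~\ref{thm:asy}. Write $m_n=\prod_p p^{\ord_p(m_n)}$, set $S_1=\{p:p\mid m_n\}$ (disjoint from $S_N$ since $(m_n,N)=1$) and $N_1=\prod_{p\mid m_n}p$, and for each $p\in S_1$ put $\lambda_p=\sum_{j=1}^n\ord_p(m_j)\,e_j\in X_*(T)$, so that $\lambda_p(p)=\diag(p^{\ord_p m_1},\dots,p^{\ord_p m_n},p^{-\ord_p m_1},\dots,p^{-\ord_p m_n})$. Let $h_1=\bigotimes_{p\mid m_n}\trep_{K_p\lambda_p(p)K_p}\in \cH^\ur(G(\Q_{S_1}))^\kappa$ with $\kappa=\max_{p\mid m_n}\ord_p(m_n)$ (the height bound holds because $m_1\mid\cdots\mid m_n$), and note $\sup_{x}|h_1(x)|\le 1$ since $h_1$ is a characteristic function. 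Let $h=\vol(K(N))^{-1}\times h_1\otimes h_N$ be the test function \eqref{eq:testft} attached to this $h_1$. Generalizing the dictionary between classical and adelic Hecke operators of \cite[\S2.2]{KWY} to degree $n$, one has $T_h=T(D_{\underline{m}})$ on $S_\uk(\Gamma(N))$ (up to the harmless measure normalization, which is absorbed below), so, since $HE_\uk(N)$ is a simultaneous eigenbasis for the Hecke operators away from $N$,
\[
\sum_{F\in HE_\uk(N)}\lambda(F,D_{\underline{m}})=\Tr\!\left(T_h|_{S_\uk(\Gamma(N))}\right).
\]

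Now apply Theorem~\ref{thm:asy}; since $\underline{m}$ is fixed, the hypothesis $N\ge c_0N_1^{2n\kappa}$ holds for all large $N$. The key point is that the main term vanishes: because $m_n>1$ there is a prime $p\mid m_n$ with $\ord_p(m_n)>0$, hence $\lambda_p(p)\notin K_p=G(\Z_p)$ and $K_p\lambda_p(p)K_p\ne K_p$, so $1$ is not in the support of $h_1$ and $h_1(1)=0$. Therefore
\[
\sum_{F\in HE_\uk(N)}\lambda(F,D_{\underline{m}})=\vol(K(N))^{-1}\,O\!\left(N_1^{a\kappa+b}N^{-n}\right).
\]
Dividing by $|HE_\uk(N)|=d_\uk(N)$, and using that $\vol(K(N))^{-1}\ll N^{2n^2+n}$ while $d_\uk(N)\gg_\uk N^{2n^2+n}$ by \eqref{dimension} together with $C_N\ge\prod_{i=1}^n\zeta(2i)^{-1}$, we get $\vol(K(N))^{-1}/d_\uk(N)=O_\uk(1)$, whence
\[
\frac{1}{|HE_\uk(N)|}\sum_{F\in HE_\uk(N)}\lambda(F,D_{\underline{m}})=O_\uk\!\left(N_1^{a\kappa+b}N^{-n}\right).
\]

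It remains to replace $N_1^{a\kappa+b}$ by a genuine power of $m_n$, and this is the only point requiring care: the crude bounds $N_1\le m_n$ and $\kappa\le\log_2 m_n$ only give $N_1^{a\kappa+b}\le m_n^{a\log_2 m_n+b}$, which is not polynomial. Instead, tracing through the proof of Theorem~\ref{thm:asy} one sees that the dependence on $S_1$ in the error term factorizes over the primes of $S_1$ as $\prod_{p\in S_1}p^{a\kappa_p+b}$, where $\kappa_p=\|\lambda_p\|=\ord_p(m_n)$ is the local height; hence for our $h_1$ this contribution equals $\prod_{p\mid m_n}p^{a\ord_p(m_n)+b}=m_n^{a}\prod_{p\mid m_n}p^{b}\le m_n^{a+b}$, using $\prod_{p\mid m_n}p^{\ord_p(m_n)}=m_n$. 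Renaming $a+b$ to $a$ yields the asserted bound $O_\uk(m_n^{a}N^{-n})$. Everything else — the classical/adelic dictionary, checking that the primes dividing $m_n$ are automatically unramified places for $K(N)$ so that the test function $h$ fits the hypotheses of Theorem~\ref{thm:asy}, and the vanishing $h_1(1)=0$ — is routine substitution.
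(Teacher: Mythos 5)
Your proof is correct and follows the same route as the paper's: realize $\sum_F \lambda(F,D_{\underline{m}})$ as $\Tr(T_h|_{S_\uk(\Gamma(N))})$ via the classical--adelic dictionary, apply Theorem~\ref{thm:asy}, observe that $h_1(1)=0$ kills the main term, and absorb $\vol(K(N))^{-1}/d_\uk(N)=O_\uk(1)$ using \eqref{dimension}. You add something the paper leaves implicit, though, and it is a genuine point: the literal bound $O(N_1^{a\kappa+b})$ in Theorem~\ref{thm:asy}, with $N_1=\prod_{p\mid m_n}p$ and $\kappa=\max_p\ord_p(m_n)$, is \emph{not} in general polynomial in $m_n$ (e.g. $m_n=2^{100}\cdot 3\cdot 5\cdots p_r$ gives $N_1^{\kappa}\gg m_n^{c}$ for no fixed $c$). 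The paper's proof just cites Theorem~\ref{thm:asy} and asserts the $m_n^a$ bound without addressing this. Your resolution --- that the Euler-product structure of the test function $\Phi_{h_1,r}=\otimes_{p\in S_1}\Phi_{h_{1,p},r}$ forces the local zeta integrals, and hence the error estimate in the proof of Theorem~\ref{thm:asy}, to factorize over $p\in S_1$ as $\prod_p p^{a\kappa_p+b}=m_n^a\prod_{p\mid m_n}p^b\le m_n^{a+b}$ --- is exactly the right repair, consistent with the way $\mathscr{Z}_r(S_1,h_1)$ is bounded prime-by-prime via Assem's local results. So your argument is not a different approach; it is the paper's approach with a necessary refinement of the error-term dependence on $m_n$ that a careful reader must supply.
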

\begin{proof} Let $S_1$ be the set of all prime divisors of $m_n$. 
Since $m_n>1$, $S_1$ is non-empty. 
The main term of RHS in Theorem \ref{thm:asy} includes $h_1(1)$. Clearly, $h_1(1)=0$ because 
the double coset defining the Hecke operator $h_1$ does not contain 
any central elements. 
Since the automorphic counting measure is supported on cuspidal representations, 
Theorem \ref{thm:asy} implies the claim.
\end{proof}

Write $L^N(s,F)=\ds\sum_{m=1\atop (m,N)=1}^\infty a_F(m)m^{-s}$, and $L(s,F)_p=\ds\sum_{k=0}^\infty a_F(p^k) p^{-ks}$ for each prime $p\nmid N$. Here $a_F(p^k)=\sum_{\underline{a}} \lambda(F,D_{p,\underline{a}})$, where the sum is over all $\underline{a}=(a_1,...,a_n)$ such that $0\leq a_1\leq\cdots\leq a_n$, $a_1+\cdots+a_n=k$. Hence, for $k>0$, $p\nmid N$,
$$\frac 1{d_\uk(N)} \sum_{F\in HE_{\uk}(N)} a_F(p^k)=O(p^{ka}N^{-n}).
$$
More generally,

\begin{cor}\label{a_F} For $m>1$, $(m,N)=1$,
$$\frac 1{d_\uk(N)} \sum_{F\in HE_{\uk}(N)} a_F(m)=O(m^{a}N^{-n}).
$$
\end{cor}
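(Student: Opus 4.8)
The plan is to unwind the definition of $a_F(m)$ into a sum of Hecke eigenvalues $\lambda(F,D_{\underline m})$ and then apply Theorem~\ref{lambda} termwise. By definition $L^N(s,F)=\sum_{(m,N)=1}a_F(m)m^{-s}$, while on the other hand $L^N(s,F)=\sum_{\underline m}\lambda(F,D_{\underline m})\det(D_{\underline m})^{-s}$, the latter sum running over tuples $\underline m=(m_1,\dots,m_n)$ with $m_1\mid m_2\mid\cdots\mid m_n$ and $(m_n,N)=1$. Comparing the coefficients of $m^{-s}$ in these two absolutely convergent Dirichlet series, and noting that $(m_n,N)=1$ is automatic once $(m,N)=1$ since $m_n\mid m$, gives
\[
a_F(m)=\sum_{\underline m\,:\,\det D_{\underline m}=m}\lambda(F,D_{\underline m}),\qquad \det D_{\underline m}=m_1m_2\cdots m_n .
\]

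Next I would estimate each summand using Theorem~\ref{lambda}. For every $\underline m$ appearing in this sum one has $m_n=\max_i m_i\ge m^{1/n}>1$ because $m>1$, and $(m_n,N)=1$ because $m_n\mid m$; hence Theorem~\ref{lambda} applies and gives
\[
\Bigl| d_\uk(N)^{-1}\sum_{F\in HE_\uk(N)}\lambda(F,D_{\underline m})\Bigr|=O(m_n^a N^{-n})=O(m^a N^{-n}),
\]
with an implied constant independent of $\underline m$ and of $N$ (and with $N$ sufficiently large in terms of $m$, as required by Theorem~\ref{thm:asy}). Summing over $\underline m$ and using the triangle inequality yields
\[
d_\uk(N)^{-1}\sum_{F\in HE_\uk(N)}a_F(m)=O\bigl(\#\{\underline m:\det D_{\underline m}=m\}\cdot m^a N^{-n}\bigr).
\]

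Finally I would bound the number of tuples. Writing $m=\prod_{p\mid m}p^{e_p}$, a tuple $\underline m$ with $\det D_{\underline m}=m$ is the same as a choice, for each prime $p\mid m$, of a partition of $e_p$ into at most $n$ parts; the number of such partitions is at most $(e_p+1)^{n-1}$, so $\#\{\underline m:\det D_{\underline m}=m\}\le\prod_{p\mid m}(e_p+1)^{n-1}=\sigma_0(m)^{n-1}=O(m^{\epsilon})$ for every $\epsilon>0$. Combining the three steps gives $d_\uk(N)^{-1}\sum_F a_F(m)=O(m^{a+\epsilon}N^{-n})$, and absorbing the $\epsilon$ into the exponent (for instance replacing the constant $a$ by $a+1$) produces the stated bound. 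I do not anticipate any genuine obstacle here: the analytic input is entirely contained in Theorem~\ref{lambda}, and the only points needing care are the elementary identity expressing $a_F(m)$ as a sum of the $\lambda(F,D_{\underline m})$ and the uniformity, across all $\underline m$ with $\det D_{\underline m}=m$, of the constant furnished by Theorem~\ref{lambda}.
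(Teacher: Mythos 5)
Your proposal is correct and takes essentially the same route as the paper: the paper's proof is the one-line observation that $a_F(m)=\sum_{\underline m:\,m_1\cdots m_n=m}\lambda(F,D_{\underline m})$ followed by an appeal to Theorem~\ref{lambda}. You have simply filled in the (routine but worth-checking) details — that $m_n>1$ and $(m_n,N)=1$ are automatic, that the number of admissible tuples $\underline m$ is $O(m^\epsilon)$, and that the implied constant from Theorem~\ref{lambda} is uniform — all of which are exactly what the paper leaves implicit.
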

\begin{proof} We have $a_F(m)=\sum_{\underline{m}} \lambda(F,D_{\underline{m}})$, where the sum is over all $\underline{m}=(m_1,...,m_n)$, $m_1|m_2|\cdots |m_n$, $m_1\cdots m_n=m$. Our assertion follows from Theorem \ref{lambda}.
\end{proof}

Write 
$L^N(s,\pi_F,{\rm St})=\ds\sum_{m=1\atop (m,N)=1}^\infty \mu_F(m)m^{-s}$. 
Then from (\ref{Bo}), we have, for $p\nmid N$,
$$\mu_F(p)=(a_F(p)+1)p^{-n},\quad \mu_F(p^2)=1+p^{-2}+\cdots+p^{-2n}+(a_F(p^2)+a_F(p))p^{-2n}.
$$
More generally, for $p\nmid N$,

$$\mu_F(p^k)=\begin{cases} 1+p^{-2} h_k(p^{-2})+p^{-n}\ds\sum_{i=1}^k h_{ik}(p^{-1})a_F(p^i), &\text{if $k$ is even}\\
p^{-n} h_k'(p^{-2})+p^{-n}\ds\sum_{i=1}^k h_{ik}'(p^{-1}) a_F(p^i), &\text{if $k$ is odd},
\end{cases}
$$
where $h_k,h_k', h_{ik}, h_{ik}'\in\Bbb Z[x]$. Therefore, for $(m,N)=1$,
$$
\mu_F(m)=\prod_{p|m} (\delta_{p,m}+p^{-2}h_m^\delta(p^{-1}))+ \sum_{u|m\atop u>1} A_u a_F(u),
$$
where $A_u\in\Bbb Q$, $h_m^\delta\in\Bbb Z[x]$, and $\delta=\delta_{p,m}=\begin{cases} 1, &\text{if $v_p(m)$ is even}\\0, &\text{otherwise}\end{cases}$.

Therefore, by Corollary \ref{a_F}, we have

\begin{thm}\label{stan} Fix $\uk=(k_1,...,k_n)$ and let $m=\ds\prod_{p|m} p^{v_p(m)}$ which is 
coprime to $N$. Then 
$$\frac{1}{d_{\uk}(N)}\sum_{F\in HE_{\uk}(N)} \mu_F(m) = \prod_{p|m} \left(\delta_{p,m} + p^{-2}h_m^\delta(p^{-1})\right)+ O(N^{-n}m^c).
$$
\end{thm}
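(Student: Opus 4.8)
The plan is to feed the explicit expansion of $\mu_F(m)$ obtained just above the statement into the averaging bound of Corollary~\ref{a_F}. Recall that for $(m,N)=1$ one has
$$\mu_F(m)=\prod_{p\mid m}\bigl(\delta_{p,m}+p^{-2}h_m^\delta(p^{-1})\bigr)+\sum_{u\mid m,\,u>1}A_u\,a_F(u)$$
with $A_u\in\Q$, and the first summand is independent of $F$. Averaging over $F\in HE_\uk(N)$ and dividing by $d_\uk(N)$ (which is positive for large $N$ by (\ref{dimension})), the $F$-independent term reproduces exactly the claimed main term $\prod_{p\mid m}(\delta_{p,m}+p^{-2}h_m^\delta(p^{-1}))$, so everything reduces to bounding $\frac1{d_\uk(N)}\sum_{F\in HE_\uk(N)}\sum_{u\mid m,\,u>1}A_u a_F(u)$.

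First I would interchange the two finite sums and apply Corollary~\ref{a_F} for each $u\mid m$ with $u>1$: since $(u,N)=1$, one gets $\frac1{d_\uk(N)}\sum_{F} a_F(u)=O(u^a N^{-n})$ with $a>0$ the constant from that corollary, so it remains to control $\sum_{u\mid m,\,u>1}|A_u|\,u^a$. Next I would recall how $A_u$ is built: expanding the multiplicativity relations $\mu_F(m)=\prod_{p\mid m}\mu_F(p^{v_p(m)})$ and $a_F(u)=\prod_{p\mid u}a_F(p^{v_p(u)})$, one has $A_u=\prod_{p\mid m}A_u^{(p)}$, where for $p\mid u$ (resp. $p\nmid u$) the local factor $A_u^{(p)}$ is, up to a harmless power $p^{-nv_p(u)}\le 1$, the coefficient of $X^{v_p(m)-v_p(u)}$ (resp. $X^{v_p(m)}$) in the power series $(1-p^{-n}X)^{-1}\prod_{i=1}^n(1-p^{2i-2n}X^2)^{-1}$. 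Since every exponent of $p$ occurring in that series is $\le 0$, each such coefficient is bounded by the number of admissible $(n+1)$-tuples of nonnegative integers it ranges over, hence by $(v_p(m)+1)^{n+1}$; multiplying over $p\mid m$ gives $|A_u|\le\bigl(\prod_{p\mid m}(v_p(m)+1)\bigr)^{n+1}=\sigma_0(m)^{n+1}$.

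Combining, $\bigl|\frac1{d_\uk(N)}\sum_{F}\sum_{u\mid m,\,u>1}A_u a_F(u)\bigr|\ll \sigma_0(m)^{n+2}\,m^a\,N^{-n}\ll m^{a+n+2}N^{-n}$ using $\sigma_0(m)\le m$ and $u\le m$, so the theorem holds with $c=a+n+2$ (or, via $\sigma_0(m)\ll_\epsilon m^\epsilon$, any $c>a$). The only real obstacle is this polynomial control of the $A_u$: because they are assembled from geometric-type local Euler factors whose $p$-exponents are all nonpositive, the local coefficients are mild, but one must use the identity $\prod_{p\mid m}(v_p(m)+1)=\sigma_0(m)$ rather than a crude bound on the individual exponents $v_p(u)$ (which can be as large as $\log_2 m$) in order to keep $c$ absolute. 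Everything else is a direct citation of Corollary~\ref{a_F} together with the fact that the main term does not depend on $F$.
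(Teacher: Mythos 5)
Your proposal matches the paper's argument: the paper derives the same expansion $\mu_F(m)=\prod_{p\mid m}(\delta_{p,m}+p^{-2}h_m^\delta(p^{-1}))+\sum_{u\mid m,\,u>1}A_u\,a_F(u)$ just above the theorem and then invokes Corollary~\ref{a_F}, leaving the polynomial control of the coefficients $A_u$ unstated. Your explicit bound $|A_u|\le\sigma_0(m)^{n+1}$, obtained by noting that all $p$-exponents appearing in the local Euler factor $(1-p^{-n}X)^{-1}\prod_{i=1}^n(1-p^{2i-2n}X^2)^{-1}$ are nonpositive, correctly fills in the detail the paper glosses over, and the rest is exactly the paper's route.
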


This proves \cite[Conjecture 6.1 in level aspect]{KWY1} for the $\Sp(4)$ case. 

\section{$\ell$-level density of standard $L$-functions}\label{r-level}

In this section, we assume (\ref{suff-reg}) and keep the assumptions on $N$ in Proposition \ref{fixedV}. 
Then we show unconditionally that the $\ell$-level density ($\ell$ a positive integer) of the standard $L$-functions of the family 
$HE_\uk(N)$ has the symmetry type $Sp$ in the level aspect. Shin and Templier \cite{ST} showed it under several hypotheses with a family which includes non-holomorphic forms.

Under assumption (\ref{suff-reg}), $F$ satisfies the Ramanujan conjecture, namely, $|\alpha_{ip}|=1$ for each $i$.
Let
$$-\frac {L'}L(s,\pi_F,{\rm St})=\sum_{m=1}^\infty \Lambda(m)b_F(m) m^{-s},
$$
where $b_F(p^m)=1+\alpha_{1p}^m+\cdots +\alpha_{np}^m+\alpha_{1p}^{-m}+\cdots+\alpha_{np}^{-m}$ when $\pi_p$ is spherical.

For $F\in HE_{\uk}(N)$, let $\Pi$ be the Langlands transfer of $\pi_F$ to ${\rm GL}_{2n+1}$. If $F\in HE_{\uk}(N)^g$, then $L(s,\Pi,\wedge^2)$ has no pole at $s=1$, and $L(s,\Pi,Sym^2)$ has a simple pole at $s=1$. Let $L(s,\Pi\times\Pi)=\sum \lambda_{\Pi\times\Pi}(n)n^{-s}, L(s,\Pi,\wedge^2)=\sum \lambda_{\wedge^2(\Pi)}(n)n^{-s}, L(s,\Pi,Sym^2)=\sum \lambda_{Sym^2(\Pi)}(n)n^{-s}$. Then $\mu_F(p^2)=\lambda_{Sym^2(\Pi)}(p)$, and 
$\mu_F(p)^2=\lambda_{\Pi\times\Pi}(p)=\lambda_{\wedge^2(\Pi)}(p)+\lambda_{Sym^2(\Pi)}(p)$.

Note that $\mu_F(p)=b_F(p)$, and $b_F(p^2)=2\mu_F(p^2)-\mu_F(p)^2$. 
Let $T(p,\underline{a})=\Gamma(N) \begin{pmatrix} D_{p,\underline{a}}&0\\0&D_{p,\underline{a}}^{-1}\end{pmatrix}\Gamma(N)$. 
By Theorem \ref{main-appendix}, 
 $T(p,(\overbrace{0,...,0}^{n-1},1))^2$ is a linear combination of
$$T(p,(\overbrace{0,...,0}^{n-1},2)),\ 
 T(p,(\overbrace{0,...,0}^{n-2},1,1)),\ T(p,(\overbrace{0,...,0}^{n-1},1)),\ 
 T(p,\overbrace{(0,...,0)}^{n})=\Gamma(N)I_{2n}\Gamma(N).
$$ 
Therefore, 
 by Theorem \ref{stan}, if $p\nmid N$,
$\frac 1{d_\uk(N)} \sum_{F\in HE_{\uk}(N)} \mu_F(p)^2$ is of the form $1+p^{-1}g(p^{-1})+O(p^c N^{-n})$ 
for some polynomial $g\in\Bbb Z[x]$ and $c>0$. 
Here the main term $1+p^{-1}g(p^{-1})$ comes from the coefficient $p\sum_{i=0}^{2n-1} p^i$ of 
$T(p,\overbrace{(0,...,0)}^{n})$ in the linear combination. Here the explicit determination of the coefficient is necessary in our application. Hence we have 

\begin{prop} \label{b_F} For some $a>0$, and $p\nmid N$, 
\begin{eqnarray*}
&& \frac{1}{d_{\uk}(N)}\sum_{F\in HE_{\uk}(N)} b_F(p) = O(p^{-1})+O(p^a N^{-n}),\\
&&\frac{1}{d_{\uk}(N)}\sum_{F\in HE_{\uk}(N)} b_F(p^2) = 1+O(p^{-1})+O(p^a N^{-n}).
\end{eqnarray*}
\end{prop}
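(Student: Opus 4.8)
The plan is to reduce the proposition to the averaging results already established for the classical Hecke eigenvalues $\lambda(F,D_{\underline m})$, using the elementary identities $b_F(p)=\mu_F(p)$ and $b_F(p^2)=2\mu_F(p^2)-\mu_F(p)^2$ recorded above, together with the expansions of the $\mu_F(p^k)$ in terms of the $a_F(p^i)$ and the Hecke algebra relation supplied by the Appendix.

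For the first estimate, since $b_F(p)=\mu_F(p)=(a_F(p)+1)p^{-n}$ for $p\nmid N$, averaging over $F\in HE_\uk(N)$ and invoking Corollary \ref{a_F} with $m=p$ gives
\[
\frac{1}{d_\uk(N)}\sum_{F\in HE_\uk(N)}b_F(p)
=p^{-n}\Bigl(1+O\bigl(p^{a}N^{-n}\bigr)\Bigr)
=O(p^{-1})+O\bigl(p^{a}N^{-n}\bigr),
\]
since $n\ge 1$.

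For the second estimate I would treat $\mu_F(p^2)$ and $\mu_F(p)^2$ separately. The average of $\mu_F(p^2)$ is $1+O(p^{-1})+O(p^{a}N^{-n})$: this is Theorem \ref{stan} with $m=p^2$ (so that $\delta_{p,p^2}=1$), and can also be read off the explicit formula $\mu_F(p^2)=1+p^{-2}h_2(p^{-2})+p^{-n}\bigl(h_{12}(p^{-1})a_F(p)+h_{22}(p^{-1})a_F(p^2)\bigr)$ together with Corollary \ref{a_F}. For $\mu_F(p)^2=p^{-2n}(a_F(p)+1)^2=p^{-2n}\bigl(a_F(p)^2+2a_F(p)+1\bigr)$, the key input is Theorem \ref{main-appendix}: since $a_F(p)=\lambda\bigl(F,D_{p,(0,\dots,0,1)}\bigr)$, that theorem expresses $T(p,(0,\dots,0,1))^2$ as a linear combination, with coefficients that are polynomials in $p$ of bounded degree, of $T(p,(0,\dots,0,2))$, $T(p,(0,\dots,0,1,1))$, $T(p,(0,\dots,0,1))$ and the identity operator $\Gamma(N)I_{2n}\Gamma(N)$. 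Evaluating on eigenforms and averaging, Corollary \ref{a_F} (equivalently Theorem \ref{lambda}) kills the contributions of the three non-trivial double cosets up to $O(p^{a}N^{-n})$, even after multiplying by their polynomial structure constants, while the identity operator contributes its structure constant $c(p)$; the content of the Appendix computation is precisely that $p^{-2n}\bigl(c(p)+1\bigr)=1+p^{-1}g(p^{-1})$ for some polynomial $g$, so that
\[
\frac{1}{d_\uk(N)}\sum_{F\in HE_\uk(N)}\mu_F(p)^2=1+O(p^{-1})+O\bigl(p^{a}N^{-n}\bigr).
\]
Subtracting,
\[
\frac{1}{d_\uk(N)}\sum_{F\in HE_\uk(N)}b_F(p^2)
=2\bigl(1+O(p^{-1})+O(p^{a}N^{-n})\bigr)-\bigl(1+O(p^{-1})+O(p^{a}N^{-n})\bigr)
=1+O(p^{-1})+O\bigl(p^{a}N^{-n}\bigr),
\]
which is the assertion. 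The exponent $a$ absorbs the finitely many ($n$-bounded) polynomial degrees appearing above and the exponent in Corollary \ref{a_F}; in particular it depends only on $n$ and $\uk$, not on $p$ or $N$.

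The step I expect to be the main obstacle is the bookkeeping around Theorem \ref{main-appendix}: one must verify that, after normalizing by $p^{-2n}$, the coefficient of the identity operator in $T(p,(0,\dots,0,1))^2$ has the shape $1+O(p^{-1})$, and that every other structure constant is genuinely a polynomial in $p$ of degree bounded in terms of $n$, so that its product with an $O(p^{a}N^{-n})$ term remains of the required form. Everything else is a routine combination of Corollary \ref{a_F}, Theorem \ref{stan}, and the displayed formulas for the $\mu_F(p^k)$.
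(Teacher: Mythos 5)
Your argument is correct and matches the paper's proof: both reduce the second estimate to $b_F(p^2)=2\mu_F(p^2)-\mu_F(p)^2$, handle $\mu_F(p^2)$ via Theorem \ref{stan}, and reduce the average of $\mu_F(p)^2=p^{-2n}\bigl(a_F(p)^2+2a_F(p)+1\bigr)$ through the Hecke algebra relation of Theorem \ref{main-appendix}, with Theorem \ref{lambda} (equivalently Corollary \ref{a_F}) killing the non-identity double cosets and the identity-coset coefficient $p\sum_{i=0}^{2n-1}p^{i}$ supplying the main term $1+O(p^{-1})$. The bookkeeping you flag as the potential obstacle is resolved exactly as you anticipate, since $p^{-2n}\bigl(1+p+\cdots+p^{2n}\bigr)=1+p^{-1}\bigl(1+p^{-1}+\cdots+p^{-(2n-1)}\bigr)$ and the other structure constants $(p+1,\ p^{n}-1)$ are polynomials of degree bounded by $n$.
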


We denote the non-trivial zeros of $L(s,\pi_F,{\rm St})$ by $\sigma_{F,j}=\frac{1}{2}+\sqrt{-1} \gamma_{F,j}$. Without assuming the GRH for $L(s,\pi_F,{\rm St})$, we can order them as
\begin{equation*}
\cdots \leq Re({\gamma_{F,-2}}) \leq Re({\gamma_{F,-1}}) \leq 0 \leq Re({\gamma_{F,1}}) \leq Re({\gamma_{F,2}}) \leq \cdots
\end{equation*}
Let $c(F)=q(F)(k_1\cdots k_n)^2$ be the analytic conductor, and
$\displaystyle \log c_{\uk,N}=\frac 1{d_{\uk}(N)} \sum_{F\in HE_{\uk}(N)} \log c(F)$.
From Theorems \ref{newforms} and \ref{stan-conductor}, we have

\begin{lem}\label{logN} Let $n>1$. We assume that $N$ is square free. Then
$(k_1\cdots k_n)^2 N^{\frac 1{2\zeta(n^2)}}\leq c_{k,N}\leq (k_1\cdots k_n)^2 N^{2n+1}$.
\end{lem}

This proves \cite[Hypothesis 11.4]{ST} in our family. It is used in the proof of (\ref{one-level}).

\begin{proof} By Theorem \ref{stan-conductor}, $q(F)\leq N^{2n+1}$. It gives rise to the upper bound.
If $F\in HE^{{\rm new}}_\uk(N)$, $q(F)\geq N^{\frac 12}$ by Theorem \ref{stan-conductor}. By Theorem \ref{newforms}, $|HE^{{\rm new}}_\uk(N)|\geq \zeta(n^2)^{-1} |HE_\uk(N)|$.
Hence 
$$\log c_{\uk,N}\geq \log(k_1\cdots k_n)^2+\frac 1{d_{\uk}(N)} \sum_{F\in HE^{\rm new}_{\uk}(N)} \log q(F)\geq \log(k_1\cdots k_n)^2+\frac 1{2\zeta(n^2)} \log N.
$$
\end{proof}

Consider, for an even Paley-Wiener function $\phi$,
$$D(F,\phi)=\sum_{\gamma_{F,j}} \phi\left(\frac {\gamma_{F,j}}{2\pi}\log c_{\uk,N}\right).
$$
Then as in \cite[(9.1)]{KWY},
\begin{eqnarray*}
&& \frac 1{d_{\uk}(N)} \sum_{F\in HE_\uk(N)} D(F,\phi)=\widehat{\phi}(0)-\frac 12 \phi(0) \\
&& \phantom{xxxxxxxxxx} - \frac{2}{(\log c_{\uk,N}) d_{\uk}(N)} \sum_{F\in HE_\uk(N)} \sum_p \frac{b_F(p)\log p}{\sqrt{p}}\widehat{\phi}\left( \frac{\log p}{\log c_{\uk,N}}\right) \nonumber \\
&& \phantom{xxxxxxxxxx} -\frac{2}{(\log c_{\uk,N}) d_{\uk}(N)} \sum_{F\in HE_\uk(N)} \sum_p \frac{(b_F(p^2)-1)\log p}{p}\widehat{\phi}\left( \frac{2\log p}{\log c_{\uk,N}}\right) \\
&& \phantom{xxxxxxxxxx} +O\left(\frac {|HE_\uk(N)^0|}{d_{\uk}(N)}\right)+O\left( \frac{1}{\log c_{\uk,N}} \right),
\end{eqnarray*}
where $HE_\uk(N)^0$ is in Proposition \ref{pole}. [In \cite[(9.4)]{KWY}, the term $O\left(\frac {|HE_\uk(N)^0|}{d_{\uk}(N)}\right)$ was omitted.]

By Proposition \ref{b_F}, we can show as in \cite{KWY} that for an even Paley-Wiener function $\phi$ such that the Fourier transform $\hat{\phi}$ of $\phi$ is supported in $(-\beta,\beta)$, where $\ds\beta=\min\left(\frac n{(2n+1) (a+1/2)}, \frac {2n}{(2n+1) a}\right)$, 

\begin{equation}\label{one-level}
\frac 1{d_{\uk}(N)} \sum_{F\in HE_{\uk}(N)}  D(F,\phi)=\widehat{\phi}(0)-\frac 12 \phi(0)
+O\left( \frac{1}{\log c_{\uk,N}} \right)
=\int_\Bbb R \phi(x)W(\text{Sp})(x)\, dx+O\left( \frac{\omega(N)}{\log N} \right),
\end{equation}
where $\omega(N)$ is the number of prime factors of $N$, and $W(\text{Sp})(x) = 1- \dfrac {\sin 2\pi x}{2\pi x}$. [When we exchange two sums, if $p\nmid N$, we use Proposition \ref{b_F}. If $p|N$, by the Ramanujan bound, $|b_F(p)|\leq n, |b_F(p^2)|\leq n$. Hence by the trivial bound, we would obtain $\sum_{p|N}  \frac{b_F(p)\log p}{\sqrt{p}}\ll \omega(N)$ and $\sum_{p|N}  \frac{b_F(p^2)\log p}p\ll \omega(N)$.]

For a general $\ell$, let 
$$W(\text{Sp})(x) = \text{det}(K_{-1}(x_j,x_k))_{1\leq j\leq \ell\atop 1\leq k\leq \ell},
$$
where $K_{-1}(x,y)=\dfrac {\sin \pi(x-y)}{\pi(x-y)}- \dfrac {\sin \pi(x+y)}{\pi(x+y)}$.
Let $\phi(x_1,...,x_\ell)=\phi_1(x_1)\cdots \phi_\ell(x_\ell)$, where each $\phi_i$ is an even Paley-Wiener function and $\hat \phi(u_1,...,u_\ell)=\hat \phi_1(u_1)\cdots \hat\phi_\ell(u_\ell)$.
We assume that the Fourier transform $\hat{\phi_i}$ of $\phi_i$ is supported in $(-\beta,\beta)$ for $i=1,\dots,\ell$.
The $\ell$-level density function is
\begin{equation*}\label{n-level-st}
D^{(\ell)}(F, \phi)
={\sum}_{j_1,\cdots,j_\ell}^*\phi\left(\gamma_{j_1}\frac{\log c_{\uk,N}}{2 \pi},\gamma_{j_2}\frac{\log c_{\uk,N}}{2 \pi},\dots,\gamma_{j_\ell}\frac{\log c_{\uk,N}}{2 \pi}\right)
\end{equation*}
where $\sum_{j_1,...,j_\ell}^*$ is over $j_i=\pm 1,\pm 2,...$ with $j_{a}\ne \pm j_{b}$ for $a\ne b$.
Then as in \cite{KWY1}, using Theorem \ref{stan}, we can show 

\begin{thm}\label{n-level-sp} We assume that $N$ is square free. 
Let $\phi(x_1,...,x_\ell)=\phi_1(x_1)\cdots \phi_\ell(x_\ell)$, where each $\phi_i$ is an even Paley-Wiener function and $\hat \phi(u_1,...,u_\ell)=\hat \phi_1(u_1)\cdots \hat\phi_\ell(u_\ell)$. Assume the Fourier transform $\hat{\phi_i}$ of $\phi_i$ is supported in $(-\beta,\beta)$ for $i=1,\cdots,\ell$. (See (\ref{one-level}) for the value of $\beta$.)
Then
$$
\frac 1{d_{\uk}(N)} \sum_{F \in HE_{\uk}(N)} D^{(\ell)}(F,\phi)=
\int_{\Bbb R^\ell} \phi(x)W({\rm Sp})(x)\, dx + O\left(\frac {\omega(N)}{\log N}\right).
$$
\end{thm}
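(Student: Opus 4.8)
The plan is to reduce the $\ell$-level density to sums of one- and two-level quantities that are already controlled by Theorem~\ref{stan} (equivalently Proposition~\ref{b_F}), following the combinatorial scheme of \cite{ILS} and the $\ell=1$ case worked out in \eqref{one-level}. First I would apply the explicit formula to each factor $\phi_i$ separately: for a fixed $F\in HE_\uk(N)$, the zero sum $\sum_{\gamma_F}\phi_i(\gamma_F\tfrac{\log c_{\uk,N}}{2\pi})$ equals $\widehat{\phi_i}(0)-\tfrac12\phi_i(0)$ minus the prime sums $\frac{2}{\log c_{\uk,N}}\sum_p \frac{b_F(p)\log p}{\sqrt p}\widehat{\phi_i}\!\left(\frac{\log p}{\log c_{\uk,N}}\right)$ and $\frac{2}{\log c_{\uk,N}}\sum_p \frac{(b_F(p^2)-1)\log p}{p}\widehat{\phi_i}\!\left(\frac{2\log p}{\log c_{\uk,N}}\right)$, plus the $O(|HE_\uk(N)^0|/d_\uk(N))+O(1/\log c_{\uk,N})$ terms coming from the possible pole of $L(s,\pi_F,\mathrm{St})$ at $s=1$ and from $L_\infty$. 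Proposition~\ref{pole} shows $|HE_\uk(N)^0|/d_\uk(N)=O(N^{-2n+\epsilon})$, so that term is negligible; by Lemma~\ref{logN} one may replace $\log c_{\uk,N}$ by $\log N$ up to constants throughout.

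Next I would expand the restricted sum $\sum^*_{j_1,\dots,j_\ell}$ over distinct indices by inclusion–exclusion into unrestricted products: $D^{(\ell)}(F,\phi)$ becomes a signed sum over set partitions of $\{1,\dots,\ell\}$ of products $\prod_{\text{blocks }B}\sum_{\gamma_F}\Phi_B(\gamma_F\tfrac{\log c_{\uk,N}}{2\pi})$, where $\Phi_B(x)=\prod_{i\in B}\phi_i(mx)$ with $m=|B|$; this is the standard identity giving rise to the Pfaffian/determinantal kernel $K_{-1}$. Averaging over $F$, the terms where every block is a singleton produce, via \eqref{one-level} applied to each $\phi_i$, the main term $\int_{\R^\ell}\phi(x)W(\mathrm{Sp})(x)\,dx$; one checks that the product of the one-level densities $\prod_i\big(\widehat{\phi_i}(0)-\tfrac12\phi_i(0)\big)$ together with the lower-order block contributions assembles exactly into $\det(K_{-1}(x_j,x_k))$ integrated against $\phi$, by the same linear-algebra computation as in \cite{ILS} and \cite{KWY1}. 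The blocks of size $\geq 2$ contribute, after averaging, only cross-terms in $b_F(p)$, $b_F(p^2)$ and their products; using Proposition~\ref{b_F} the off-diagonal averages of $b_F(p)$ and $b_F(p^2)-1$ are $O(p^{-1})+O(p^aN^{-n})$, and the support restriction $\widehat{\phi_i}\subset(-\beta,\beta)$ with $\beta<1$ makes the prime sums over $p\leq N^{\beta}$ converge with total size $O(1/\log N)$, while the $O(p^aN^{-n})$ error, summed over $p\leq N^\beta$, is absorbed once $\beta$ is taken small enough (this is where the precise admissible range of $\beta$ is pinned down, exactly as in \cite{KWY,KWY1}).

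The remaining point is the treatment of the ramified primes $p\mid N$ and of cross-terms involving products like $b_F(p)b_F(p')$ or $b_F(p^2)$ that are not directly covered by Proposition~\ref{b_F}: here one uses the Ramanujan bound from Theorem~\ref{Ram} (valid under \eqref{suff-reg}), giving $|b_F(p^j)|\leq 2n+1$ uniformly, so the trivial bound contributes $\sum_{p\mid N}\frac{\log p}{\sqrt p}\ll\omega(N)$ per factor and hence an error $O(\omega(N)/\log N)$; for products $b_F(p)b_F(p')$ with $p\neq p'$ both unramified one needs the joint average $\frac{1}{d_\uk(N)}\sum_F b_F(p)b_F(p')$, which factors through $\mu_F$-coefficients and is handled by Theorem~\ref{stan} applied to $m=pp'$ (using the convolution identity \eqref{Bo} relating products of Hecke eigenvalues to $\mu_F$ at composite arguments, cf.\ the Appendix computation). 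The main obstacle, as in the one-level case, is bookkeeping: verifying that the combinatorial sum over partitions reorganizes cleanly into the symplectic determinantal kernel $W(\mathrm{Sp})$ while every non-diagonal term is provably $O(\omega(N)/\log N)$ within the allowed support; but since each individual ingredient (explicit formula, Proposition~\ref{b_F}, Theorem~\ref{stan}, Ramanujan, Lemma~\ref{logN}, Proposition~\ref{pole}) is already in place, this is a matter of repeating the argument of \cite{KWY1} verbatim with the inputs proved here. Thus the proof reduces to citing \cite{ILS,KWY1} for the combinatorics and invoking Theorem~\ref{stan} for the arithmetic averages.
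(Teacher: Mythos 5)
Your proposal follows the same route as the paper, which for this theorem simply cites \cite{KWY1} together with Theorem~\ref{stan}: the expansion you give (explicit formula factor by factor, inclusion--exclusion over set partitions, Proposition~\ref{b_F} and Theorem~\ref{stan} for the unramified averages, the Ramanujan bound and $\omega(N)$ count for $p\mid N$, Lemma~\ref{logN} to replace $\log c_{\uk,N}$ by $\log N$, Proposition~\ref{pole} to discard the polar contribution) is exactly what that citation unpacks into. One small slip worth correcting: in the set-partition expansion the block function should be $\Phi_B(x)=\prod_{i\in B}\phi_i(x)$, with no rescaling of the argument by $m=|B|$; the diagonal contributions that arise when indices coincide are zero sums of the \emph{product} of the test functions at the same point, and the factor you inserted does not appear in the ILS/\cite{KWY1} combinatorics.
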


\section{The order of vanishing of standard $L$-functions at $s=\frac 12$}\label{order}

In this section, we show that the average order of vanishing of standard $L$-functions at $s=\frac 12$ is bounded under GRH (cf. \cite{ILS, Brumer}). Under GRH on $L(s,\pi_F,{\rm St})$, its zeros are $\frac 12+\sqrt{-1}\gamma_F$ with $\gamma_F\in\Bbb R$.

\begin{thm}\label{order} Assume the GRH. Assume (\ref{suff-reg}) and $N$ is square free.  Let $r_F={\rm ord}_{s=\frac 12} L(s,\pi_F,{\rm St})$.
Then
$$\frac 1{d_\uk(N)} \sum_{F\in HE_{\uk}(N)} r_F\leq  C,
$$ 
for some constant $C>0$.
\end{thm}
\begin{proof}
Choose $\displaystyle\phi(x)=\left(\frac {2\sin \frac {x \beta}2}x\right)^2$ for $x\in\Bbb R$, where 
$\beta$ is from \eqref{one-level}. Then 
$$\widehat\phi(x)=\begin{cases} \beta-|x|, &\text{if $|x|<\beta$}\\ 0, &\text{otherwise}\end{cases}.
$$ 
Since $\phi(x)\geq 0$ for $x\in\Bbb R$, from (\ref{one-level}),
we have
$$\frac 1{d_\uk(N)} \sum_{F\in HE_\uk(N)} r_F\phi(0)\leq \widehat\phi(0)-\frac 12\phi(0)+O\left(\frac 1{\log\log N}\right).
$$
Hence we have 
$$
\frac 1{d_\uk(N)} \sum_{F\in HE_\uk(N)} r_F\leq \frac 1{\beta}-\frac 12+O\left(\frac 1{\log\log N}\right).
$$
\end{proof}

We can show a similar result for the spinor $L$-function of $\GSp(4)$. Recall the following from \cite{KWY}:

\begin{prop} Assume $(N,11!)=1$. 
\begin{enumerate} 
  \item (level aspect) Fix $k_1,k_2$. 
  Then for $\phi$ whose Fourier transform $\hat\phi$ has support in $(-u,u)$ for some $0<u<1$, as $N\to\infty$ (See \cite[Proposition 9.1]{KWY} for the value of $u$),
$$
 \frac 1{d_\uk(N)} \sum_{F\in HE_{\underline{k}}(N)}  D(\pi_F,\phi, {\rm Spin})=\hat\phi(0)+\frac 12 \phi(0)+O(\frac 1{\log\log N}).
$$
  \item (weight aspect) Fix $N$. Then for $\phi$ whose Fourier transform $\hat\phi$ has support in $(-u,u)$ for some $0<u<1$, as $k_1+k_2\to\infty$,
$$\frac 1{d_\uk(N)} \sum_{F\in HE_{\underline{k}}(N)}  D(\pi_F,\phi, {\rm Spin})=\hat\phi(0)+\frac 12 \phi(0)+
O(\frac 1{\log ((k_1-k_2+2)k_1k_2)}).
$$
\end{enumerate}
\end{prop}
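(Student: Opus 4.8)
The plan is to run, for the degree-$4$ spinor $L$-function in place of the standard $L$-function, the same Iwaniec--Luo--Sarnak type explicit-formula argument that produced \eqref{one-level} and Theorem~\ref{n-level-sp}, feeding in the $\GSp(4)$ trace-formula estimates of \cite{KWY} (the $n=2$ analogues of Theorems~\ref{thm:asy}, \ref{lambda} and \ref{stan} in the level aspect, together with their weight-variable versions) as the arithmetic input. First I would recall that for $F\in HE_\uk(N)$ the spinor $L$-function $L(s,\pi_F,{\rm Spin})$ is entire with a standard functional equation and archimedean factor of the shape $\Gamma_\C(s+\tfrac{k_1+k_2-3}{2})\Gamma_\C(s+\tfrac{k_1-k_2+1}{2})$, that under \eqref{suff-reg} the Ramanujan bound $|\alpha_{ip}|=1$ holds, and that $L(s,\pi_F,\wedge^2{\rm Spin})=L(s,\pi_F,{\rm St})\,\zeta(s)$ has a simple pole at $s=1$; this pole (the symplectic nature of the spin representation) is what forces the symmetry type to be orthogonal rather than symplectic. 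Writing $-\tfrac{L'}{L}(s,\pi_F,{\rm Spin})=\sum_m\Lambda(m)\,b_F(m)\,m^{-s}$, applying the explicit formula to each $F$ and summing over the family exactly as in the derivation of \eqref{one-level} gives $\frac{1}{d_\uk(N)}\sum_{F}D(\pi_F,\phi,{\rm Spin})=\widehat\phi(0)+\tfrac12\phi(0)+(\text{two prime sums})+O(1/\log c_{\uk,N})$, where the two sums pair $b_F(p)/\sqrt p$ and $(b_F(p^2)-c_p)/p$ against $\widehat\phi$ and $c_p$ is the constant split off from the $p^2$-term when the pole of $L(s,\pi_F,\wedge^2{\rm Spin})$ is removed.

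The core of the argument is the spinor analogue of Proposition~\ref{b_F}: that $\frac{1}{d_\uk(N)}\sum_F b_F(p)$ and $\frac{1}{d_\uk(N)}\sum_F (b_F(p^2)-c_p)$ are $O(p^{-1})+O(p^{a}N^{-2})$ in the level aspect, with $c_p$ explicit. To get this, the key step is a Hecke-algebra computation analogous to Theorem~\ref{main-appendix}: one expresses the degree-$4$ spin Hecke eigenvalues $b_F(p)$ and $b_F(p^2)$ as fixed $\Q$-linear combinations of the $\GSp(4)$ Hecke operators $\lambda(F,D_{p,\underline a})$ with $\underline a$ in a short list (e.g.\ $(0,0),(0,1),(1,1),(0,2)$), identifies $c_p$ as the coefficient of the identity double coset $\Gamma(N)I_4\Gamma(N)$, and applies the $n=2$ versions of Theorems~\ref{lambda} and \ref{stan} (and their spinor analogues in \cite{KWY}) to the remaining eigenvalues. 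In the weight aspect one instead invokes the weight-variable $\GSp(4)$ trace formula of \cite{KWY}: its geometric side is dominated by the identity contribution of size $\asymp d_\uk$, the weighted orbital integrals being smaller by a factor $\asymp((k_1-k_2+2)k_1k_2)^{-1}$, so the same averages hold with $N^{-2}$ replaced by that factor. The non-genuine/endoscopic and old-form contributions are negligible by the $n=2$ analogues of Theorem~\ref{non-simple} and Proposition~\ref{pole}, and the hypothesis $(N,11!)=1$ is what makes the $\GSp(4)$ transfer and trace-formula estimates of \cite{KWY} applicable, being the analogue of the conditions in Proposition~\ref{fixedV}.

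Given these averages the remainder is routine bookkeeping, mirroring the passage from Proposition~\ref{b_F} to \eqref{one-level}. On the support of $\widehat\phi$, contained in $(-u,u)$ with $u<1$, the prime sums run over $p\le c_{\uk,N}^{u}$; the $b_F(p)$-sum contributes $O(1/\log c_{\uk,N})$ from the $O(p^{-1})$ part and $O(N^{-\delta})$ for some $\delta>0$ from the $O(p^{a}N^{-2})$ part once $u$ is taken small enough, and the $b_F(p^2)$-sum contributes $O(1/\log c_{\uk,N})$ after its main term $c_p$ has already been absorbed into $\widehat\phi(0)+\tfrac12\phi(0)$ via the prime number theorem. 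The finitely many primes $p\mid N$ are bounded trivially using the Ramanujan bound $|b_F(p)|,|b_F(p^2)|\le 4$, costing $O(\omega(N)/\log c_{\uk,N})$; combined with $\log c_{\uk,N}\asymp\log N$ (Lemma~\ref{logN}) and $\omega(N)\ll\log N/\log\log N$ this yields the stated level-aspect error $O(1/\log\log N)$, while in the weight aspect (fixed $N$, $k_1+k_2\to\infty$) the natural conductor grows and the identical bookkeeping gives $O(1/\log((k_1-k_2+2)k_1k_2))$.

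The main obstacle is the determination of $c_p$, i.e.\ carrying the $\GSp(4)$ Hecke-operator convolution through accurately enough to pin down the coefficient of the identity double coset in $b_F(p^2)$ together with its sign: this sign is precisely what distinguishes the symplectic symmetry type of the standard $L$-function ($\widehat\phi(0)-\tfrac12\phi(0)$) from the orthogonal symmetry type ($\widehat\phi(0)+\tfrac12\phi(0)$) of the spinor $L$-function, and it is where the pole of $L(s,\pi_F,\wedge^2{\rm Spin})=L(s,\pi_F,{\rm St})\zeta(s)$ enters; concretely one needs the exact analogue of Proposition~\ref{b_F} for the degree-$4$ eigenvalue. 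The second delicate point is the weight-aspect trace-formula bookkeeping --- isolating the identity term of size $d_\uk$ and showing uniformly that the weighted orbital integrals are smaller by the factor $((k_1-k_2+2)k_1k_2)^{-1}$ --- but this is already carried out in \cite{KWY}.
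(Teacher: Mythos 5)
The paper does not actually prove this proposition: it is stated with the remark ``Recall the following from \cite{KWY},'' so the ``proof'' in the paper is simply a citation, and the present paper's contribution (visible in the final Remark of the Appendix) is a list of corrections to the argument in \cite{KWY}. Your sketch reconstructs that argument in the same spirit as the paper's Section \ref{r-level} treatment of the standard $L$-function, and the overall route --- explicit formula, reduction to averages of $b_F(p)$ and $b_F(p^2)$ via a Hecke-algebra identity of B\"ocherer/Andrianov type, arithmetic input from the $\GSp(4)$ trace-formula bounds of \cite{KWY}, and the $\omega(N)/\log N \ll 1/\log\log N$ bookkeeping together with $\log c_{\uk,N}\asymp\log N$ --- is the correct one and matches the approach the paper attributes to \cite{KWY}. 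Your identification of the symmetry type via $L(s,\pi_F,\wedge^2{\rm Spin})=L(s,\pi_F,{\rm St})\,\zeta(s)$ is also right and is exactly the mechanism that flips the sign of $\tfrac12\phi(0)$ from the Sp case to the $\mathrm{SO}(\text{even})$ case.

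One small inaccuracy worth flagging, since it is precisely one of the corrections to \cite{KWY} recorded in the paper's final Remark: you assert that $L(s,\pi_F,{\rm Spin})$ is entire. This fails for CAP (Saito--Kurokawa type) forms, which do produce a pole; one must either restrict to the genuine/non-CAP part or, as the paper's Remark (item 4) does, add the missing pole term $-2(G(\tfrac32)+G(-\tfrac12))$ to the explicit formula and then show that the CAP contribution to the average is $O(N^{8+\epsilon}/d_\uk(N))$, hence negligible. Your appeal to the $n=2$ analogue of Proposition \ref{pole} is in the right direction but addresses poles of the \emph{standard} $L$-function, not the spinor $L$-function; the correct fix requires estimating CAP forms directly. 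A second caveat: the corrected conductor lower bound requires restricting to newforms as in Section 5 (giving $q(F)\geq N^{1/6}$ rather than $q(F)\geq N$, which is false), and this is what makes $\log c_{\uk,N}\asymp\log N$ valid; your sketch invokes Lemma \ref{logN} which subsumes this, so you are implicitly on solid ground, but the original $q(F)\ge N$ claim of \cite{KWY} cannot be used.
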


As in Theorem \ref{order}, we have

\begin{thm} Let $G=\GSp(4)$. Assume the GRH, and let $r_F={\rm ord}_{s=\frac 12} L(s,\pi_F,{\rm Spin})$. Then
$$\frac 1{d_\uk(N)} \sum_{F\in HE_{\underline{k}}(N)} r_F\leq \begin{cases} \frac 1u+\frac 12+O(\frac 1{\log\log N})
, &\text{level aspect}\\ \frac 1u+\frac 12 +
O(\frac 1{\log ((k_1-k_2+2)k_1k_2)})., &\text{weight aspect}\end{cases}.
$$ 
\end{thm}

\bigskip

\section{Appendix} 
In this appendix we compute the product $T(p,(\overbrace{0,...,0}^{n-1},1))^2$ in Section \ref{r-level}. 

\begin{thm}\label{main-appendix}
\begin{eqnarray*}
&& T(p,(\overbrace{0,...,0}^{n-1},1))^2=T(p,(\overbrace{0,...,0}^{n-1},2))+(p+1) T(p,(\overbrace{0,...,0}^{n-2},1,1))+(p^n-1)T(p,(\overbrace{0,...,0}^{n-1},1)) \\
&&\phantom{xxxxxxxxxxxxx} +\left(p\sum_{i=0}^{2n-1}p^i\right) T(p,\overbrace{(0,...,0)}^{n}).
\end{eqnarray*}
\end{thm}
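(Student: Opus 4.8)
Throughout write $\mathcal{H}_p=\mathcal{H}(\Sp(2n,\Q_p)/\!/\Sp(2n,\Z_p))$; since $p\nmid N$, each of the four operators $T(p,\underline a)$ appearing in the statement is the image in $\End\big(S_\uk(\Gamma(N))\big)$ of the double--coset basis element of $\mathcal{H}_p$ attached to the coweight $D_{p,\underline a}$, and the four coweights $(0,\dots,0),\ (0,\dots,0,1),\ (0,\dots,0,1,1),\ (0,\dots,0,2)$ are exactly the dominant coweights of $\Sp(2n)$ of ``size'' $\sum a_i\le 2$ (assuming $n\ge2$). Hence the claimed relation is an identity in the commutative algebra $\mathcal{H}_p$, and it suffices to verify it there. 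Via the Satake isomorphism $\mathcal{H}_p\xrightarrow{\ \sim\ }\C[X_*(T)]^{W}$, $W$ the Weyl group of type $C_n$, this is in turn equivalent to the corresponding identity of Hecke eigenvalues $\lambda(F,D_{p,\underline a})$, regarded as $W$--symmetric Laurent polynomials in a (formal, generic) Satake parameter $(\alpha_{1},\dots,\alpha_{n})\in(\C^\times)^n$; writing $\lambda_1=\lambda(F,D_{p,(0,\dots,0,1)})$, $\lambda_{11}=\lambda(F,D_{p,(0,\dots,0,1,1)})$, $\lambda_2=\lambda(F,D_{p,(0,\dots,0,2)})$, the goal becomes
\[
\lambda_1^2=\lambda_2+(p+1)\lambda_{11}+(p^n-1)\lambda_1+p\sum_{i=0}^{2n-1}p^i .
\]

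The plan is to read off three of the four inputs from Shimura's generating identity \eqref{Bo} \cite{Shimura}. Expanding its right--hand side in powers of $X$, the coefficient of $X^0$ gives $\lambda(F,D_{p,(0,\dots,0)})=1$; the coefficient of $X^1$ (there is a unique $\underline a$ with $\sum a_i=1$) gives
\[
\lambda_1=(p^n-1)+p^n\sum_{i=1}^n(\alpha_{i}+\alpha_{i}^{-1});
\]
and, since $(0,\dots,0,2)$ and $(0,\dots,0,1,1)$ are the only $\underline a$ with $\sum a_i=2$, the coefficient of $X^2$ gives a closed expression for the \emph{sum} $\lambda_2+\lambda_{11}$ in terms of $p$, $\sum_i(\alpha_i+\alpha_i^{-1})$, $\sum_i(\alpha_i^2+\alpha_i^{-2})$ and $\sum_{i<j}(\alpha_i+\alpha_i^{-1})(\alpha_j+\alpha_j^{-1})$. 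Squaring $\lambda_1$ and using the elementary identity $\big(\sum_i(\alpha_i+\alpha_i^{-1})\big)^2=\sum_i(\alpha_i^2+\alpha_i^{-2})+2n+2\sum_{i<j}(\alpha_i+\alpha_i^{-1})(\alpha_j+\alpha_j^{-1})$, the desired relation then reduces to a single remaining identity, namely a closed formula for $\lambda_{11}$ (equivalently $\lambda_2$) alone, which \eqref{Bo} does not furnish.

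The one genuinely new computation is therefore the individual eigenvalue $\lambda_{11}=\lambda(F,D_{p,(0,\dots,0,1,1)})$, together with the four structure constants $1,\ p+1,\ p^n-1,\ p\sum_{i=0}^{2n-1}p^i$ themselves. I would obtain both at once from the lattice model of these operators on the self--dual symplectic $\Z_p$--lattice $L$: $T(p,(0,\dots,0,1))$ is the sum over the lines $\ell\subset L/pL$ (each of which is automatically isotropic for the symplectic form), $T(p,(0,\dots,0,2))$ and $T(p,(0,\dots,0,1,1))$ correspond to the two $\Sp$--orbits of sublattices at symplectic ``distance'' $2$, and $T(p,(0,\dots,0))$ to the identity. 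Composing two distance--$1$ moves, one classifies the relative position of the pair $(\ell,\ell')$ into these four types and counts the number of pairs of each type; this produces the multiplicities above and, by the same token, the Satake image of $T(p,(0,\dots,0,1,1))$. (Alternatively one may quote the explicit structure of the $\Sp(2n)$ spherical Hecke algebra in the similitude--free normalization, which records the Satake image of every $T(D_{\underline m})$; only the four images above are needed.) With all four Satake images in hand, Theorem \ref{main-appendix} becomes a polynomial identity in $p$ and $\alpha_1^{\pm1},\dots,\alpha_n^{\pm1}$, and I would close by the short symmetric--function expansion just indicated.

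The main obstacle is precisely this last combinatorial step: isolating $\lambda_{11}$ uniformly in $n$. The enumeration of relative positions of the relevant lattice pairs — equivalently, the structure constants of $\mathcal{H}_p$ in the double--coset basis — is where the specific coefficients $p+1$, $p^n-1$, and $p+p^2+\dots+p^{2n}$ are produced, and it must be organized carefully because the number of sublattices involved grows with $p$ and the case distinctions depend on $n$. A secondary, purely bookkeeping point is to keep the powers of $p$ occurring in \eqref{Bo} consistent with the classical operators $T(D_{p,\underline a})$ for all $n$ (the similitude normalization matters here). Along the way I expect the degree relation $\big(\deg T(p,(0,\dots,0,1))\big)^2=\deg T(p,(0,\dots,0,2))+(p+1)\deg T(p,(0,\dots,0,1,1))+(p^n-1)\deg T(p,(0,\dots,0,1))+\big(p+p^2+\dots+p^{2n}\big)$ to serve as a useful consistency check on the counts.
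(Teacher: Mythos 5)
The strategy you outline---lift to the spherical Hecke algebra $\mathcal{H}_p$, go over to the Satake side, and verify a polynomial identity in $p$ and $\alpha_1^{\pm1},\dots,\alpha_n^{\pm1}$---is a genuinely different organization from the paper's proof, which never invokes the Satake isomorphism. The paper works purely in the double-coset basis: it sets $T_{p,n-1}:=pT(p,(0,\dots,0,1))=K\diag(1,p,\dots,p,p^2,p,\dots,p)K$ in $\GSp(2n,\Q_p)$, produces a complete set of right-coset representatives (Lemma~\ref{DC}, using Andrianov's formula), observes that $T_{p,n-1}^2$ is supported on exactly the four double cosets $K\gamma_iK$ of similitude $p^4$ and $p$-rank at most two, and then computes each structure constant $m(\gamma_i)=|\{(\alpha,\beta)\in J\times J: K\alpha\beta=K\gamma_i\}|$ by a direct count, using the self-duality $Kd_{n-1}(p)K=K(p^2d_{n-1}(p)^{-1})K$ to reduce to an integrality-and-rank condition on $\beta\cdot(p^2\gamma_i^{-1})$.

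Your route, however, does not actually sidestep that combinatorics. As you correctly observe, Shimura's identity~\eqref{Bo} only yields $\lambda_1$ and the \emph{sum} $\lambda_2+\lambda_{11}$; you still need either an explicit Satake-transform formula for $T(p,(0,\dots,0,1,1))$, or the four structure constants themselves. You propose to get both "at once" from lattice counting, but these are not quite the same computation (the first is a unipotent integral; the second is the coset count), and in any case you leave this step---which you rightly flag as "the main obstacle"---entirely unexecuted. That step is exactly what Lemma~\ref{DC} together with the computation of $m(\gamma_1),\dots,m(\gamma_4)$ supplies in the paper: organizing the coset representatives into types I, II, III, establishing which pairs compose into each $\gamma_i$, and counting (e.g.\ $m(\gamma_3)=p^n-1$ comes from the rank-one symmetric matrices over $\F_p$, quoted from MacWilliams). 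Note also that your proposed "degree relation'' consistency check is not independent of the proof: it is literally the $m(\gamma_4)$ computation, since the coefficient of the identity double coset equals $|J|=\vol(Kd_{n-1}(p)K)$. So the approach is sound and somewhat more conceptual, but as written it is a plan rather than a proof; the content of Theorem~\ref{main-appendix} lives in the coset enumeration that you defer.
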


This agrees with \cite[(2.7), p.362]{KWY} when $n=2$. [Note that
the coefficient of $R_{p^2}$ there should be replaced with $p^4+p^3+p^2+p$.]

Since $p\nmid N$, we work on $K=\Sp(2n,\Z_p)$ instead of $\G(N)$. Put 
$$T_{p,n-1}:=pT(p,(0,\ldots,0,1))=K\diag(1,\overbrace{p,\ldots,p}^{n-1},p^2,
\overbrace{p,\ldots,p}^{n-1})K\in \GSp(2n,\Bbb Q_p).
$$
It suffices to consider $T_{p,n-1}^2$. 
Let us first compute the coset decomposition. 
Put $\Lambda=\GL_n(\Z_p)$ where the identity element is denoted by $1_n$. 
For any ring $R$, let $S_n(R)$ be the set of 
all symmetric matrices of size $n$ defined over $R$ and 
$M_{m\times n}(R)$ be the set of matrices of size $m\times n$ defined over $R$. 
Put $M_n(R)=M_{n\times n}(R)$ for simplicity. For each $D\in M_n(\Z_p)$ we define 
$$B(D):=\{B\in M_n(\Z_p)\ |\ {}^tB D={}^tDB\}.$$
For each $B_1,B_2\in B(D)$, we write $B_1\sim B_2$ if there exists $M\in M_n(\Z_p)$ 
such that $B_1-B_2=MD$.  
We denote by $B(D)/\sim$ the set of all equivalence classes of $B(D)$ by the relation $\sim$. 
We regard $\mathbb{F}_p$ (resp. $\Z/p^2\Z$) as the subset $\{0,1,\ldots,p-1\}$ 
(resp. $\{0,1,\ldots,p^2-1\}$) of $\Z$. 
Let $D_I$ be the set of the following matrices in $M_n(\Z_p)$:
$$D^I_{n-1}={\rm diag}(
\overbrace{p,\ldots,p}^{n-1},1),\ D^I_s=D^I_{s}(x):=
\left(\begin{array}{c|c|c}
p\cdot 1_s &  &  \\
\hline
& 1 & x \\
\hline
 & &p\cdot 1_{n-1-s}
\end{array}
\right),\ 0\le s\le n-2,\ x\in M_{1\times(n-1-s)}(\mathbb{F}_p)
$$
where we fill out zeros in the blank blocks. 
The cardinality of $D_I$ is $1+p+\cdots+p^{n-1}=\ds\frac{p^n-1}{p-1}$ which is 
equal to that of $\Lambda\bs \Lambda d_{n-1}\Lambda$ where 
$d_{n-1}=\diag(1,\overbrace{p,\ldots,p}^{n-1})$. 
Similarly, let $D_{II}$ be the set of the following matrices:
$$D^{II}_{n-1}={\rm diag}(p,
\overbrace{1,\ldots,1}^{n-1}),\ D^{II}_s=D^{II}_{s}(y):=
\left(\begin{array}{c|c|c}
1_s & y &  \\
\hline
& p &  \\
\hline
 & &1_{n-1-s}
\end{array}
\right),\ 1\le s\le n-1,\ y\in M_{s\times 1}(\mathbb{F}_p)
$$
The cardinality of $D_{II}$ is $1+p+\cdots+p^{n-1}=\ds\frac{p^n-1}{p-1}$ which is 
equal to that of $\Lambda\bs \Lambda d_{1}\Lambda$ where 
$d_{1}=\diag(\overbrace{1,\ldots,1}^{n-1},p)$. 
Finally for each $M\in M_n(\Z_p)$ we denote by $r_p(M)$ the rank of $M$ mod $p\Z_p$. 
\begin{lem}\label{DC}Assume $p$ is odd. 
The right coset decomposition $T_{p,n-1}=\ds\coprod_{\alpha\in J}K \alpha$ 
consists of the following elements:
\begin{enumerate}
\item (type I) $\alpha=\alpha_I(D,B)=
\begin{pmatrix} 
p^2\cdot {}^tD^{-1} & B \\
0_n & D
\end{pmatrix}
$ where $D$ runs over the set $D_I$ and $B$ runs over complete representatives of  
$B(D)/\sim$ such that $r_p(\alpha)=1$. Further, for each $D^I_s$, $B$ can be taken 
over 
\begin{itemize}
\item if $s\neq 0$, then $x\neq 0$ and $B=0$;
\item if $s=0$, then $x=0$ and $B=0$. 
\end{itemize} 
\item (type II) 
 $\alpha=\alpha_{II}(D,B)=
\begin{pmatrix} 
p\cdot {}^tD^{-1} & B \\
0_n & pD
\end{pmatrix}
$ 
where $D$ runs over the set $D_{II}$ and $B$ runs over complete representatives of  
$B(D)/\sim$ such that $r_p(\alpha)=1$. Further, for each $D^{II}_s$, $B$ can be taken 
over 
\begin{itemize}
\item if $s=0$, then $\begin{pmatrix} 
B_{22} & B_{23}\\
p\cdot {}^t B_{23} & 0_{n-1}
\end{pmatrix}$ where $B_{22}$ runs over $\Z/p^2\Z$ and 
$B_{23}$ runs over $M_{1\times (n-1)}(\F_p)$;
\item if $s\neq 0$, for $D^{II}_s(y),\ y\in M_{s\times 1}(\F_p)$,  
$\left(\begin{array}{c|c|c}
0_s & p\cdot {}^tB_{21}  & 0_{s\times (n-1-s)}  \\
\hline
B_{21}& B_{22} & B_{23}  \\
\hline
0_{(n-1-s)\times s} & p\cdot {}^t B_{23} &0_{n-1-s}
\end{array}
\right)$ where $B_{21},B_{22}$ and $B_{23}$ run over $M_{1\times s}(\F_p),\ 
\Z/p^2\Z$, and $M_{1\times t}(\F_p)$ respectively.
\end{itemize} 
\item (type III) $\alpha=\alpha_{III}(B)=
\begin{pmatrix} 
p1_n & B \\
0_n & p1_n
\end{pmatrix}
$ where $B$ runs over $S_n(\mathbb{F}_p)$ with $r_p(B)=1$. 
The number  of such $B$'s is $p^n-1$.  
\end{enumerate}
\end{lem}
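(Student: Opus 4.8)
The plan is to enumerate the right cosets $K\alpha\subset T_{p,n-1}$ by an Iwasawa reduction, then an explicit case analysis organised by the elementary divisors of the lower-right ``$D$-block''. First I would recall that $T_{p,n-1}$ consists of the integral $\alpha\in\GSp(2n,\Q_p)$ with similitude factor $p^2$ whose elementary divisors are $1,p,\dots,p,p^2$ (one $1$, one $p^2$, and $2n-2$ copies of $p$). By the Iwasawa decomposition $\GSp(2n,\Q_p)=K\cdot P(\Q_p)$, with $P$ the Siegel maximal parabolic of block-upper-triangular matrices, every right coset $K\alpha$ has a representative $\left(\begin{smallmatrix} p^2\,{}^tD^{-1}&B\\ 0&D\end{smallmatrix}\right)$ with $D,B\in M_n(\Z_p)$, and the symplectic condition forces $B\in B(D)$. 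Two such representatives give the same coset exactly when they differ by left multiplication by $P\cap K=\left\{\left(\begin{smallmatrix}{}^tu^{-1}&{}^tu^{-1}S\\ 0&u\end{smallmatrix}\right): u\in\GL_n(\Z_p),\ S\in S_n(\Z_p)\right\}$, which sends $(D,B)$ to $(uD,\ {}^tu^{-1}(B+S D'))$, where $D'$ is the lower-right block of the representative ($D'=D$ in type I, $D'=pD$ in type II, $D'=p1_n$ in type III). Hence $D$ is well-defined up to left $\GL_n(\Z_p)$-multiplication and, after fixing a Hermite-type representative, $B$ is determined modulo $S_n(\Z_p)D'$ --- which is the relation $\sim$ of the statement, since inside $B(D)$ the matrix $M$ with $B_1-B_2=MD'$ must be symmetric ($D'$ being invertible over $\Q_p$).

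Next I would show that, given integrality, similitude $p^2$ and $A=p^2\,{}^tD^{-1}$, membership $\alpha\in T_{p,n-1}$ is equivalent to $r_p(\alpha)=1$: since $\det\alpha=p^{2n}$ and all elementary divisors of $\alpha$ divide $p^2$, the multiset of elementary divisors is $\{1,p^{2n-2},p^2\}$ as soon as it contains exactly one unit, and that count equals $\mathrm{rank}(\alpha\bmod p)$. Writing $\bar A,\bar D$ for the reductions mod $p$ of the diagonal blocks and using $\mathrm{rank}(\alpha\bmod p)\ge\mathrm{rank}(\bar A)+\mathrm{rank}(\bar D)$ (the bottom block rows lie in the last $n$ coordinates, so cannot cancel the $\bar A$-part of the top rows), one sees $r_p(\alpha)=1$ forces one of three cases: $\bar A=0$ and $\mathrm{rank}(\bar D)=1$; or $\mathrm{rank}(\bar A)=1$ and $\bar D=0$; or $\bar A=\bar D=0$. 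Since the elementary divisors of $A$ are $p^2$ divided by those of $D$, these are exactly: $D$ of elementary-divisor type $d_{n-1}=\diag(1,p,\dots,p)$ (type I); $D$ of type $d_1=\diag(1,\dots,1,p)$ with $D$-block $pD$ (type II); and $D=p1_n$ up to a unit (type III). In each case I would fix the canonical left-$\GL_n(\Z_p)$-coset representatives: the sets $D_I$, $D_{II}$ for types I, II (complete representatives of $\Lambda\bs\Lambda d_{n-1}\Lambda$ and $\Lambda\bs\Lambda d_1\Lambda$, of cardinality $(p^n-1)/(p-1)$ by Hermite normal form) and the single matrix $p1_n$ for type III.

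The computational heart is the final step: for each canonical $D$, solve ${}^tBD={}^tDB$ over $\Z_p$, reduce modulo $S_n(\Z_p)D'$, and then cut down by $r_p(\alpha)=1$ via the rank of $\alpha\bmod p$. Type III is immediate: $B(p1_n)=S_n(\Z_p)$, $B(p1_n)/{\sim}\;=S_n(\F_p)$, $r_p(\alpha)=\mathrm{rank}(B\bmod p)$, so the admissible classes are the rank-$1$ symmetric matrices over $\F_p$, of which there are $p^n-1$ (a short count, using $p$ odd). For types I and II I would substitute the explicit $D^I_s(x)$, $D^{II}_s(y)$: the equation ${}^tBD={}^tDB$ then forces the off-``$(s{+}1)$-st-row/column'' blocks of $B$ to vanish and the lower-right block to be symmetric, so after reduction the class of $B$ is recorded by the blocks $B_{21},B_{22},B_{23}$ (with $B_{22}$ only defined modulo $p^2$ in type II, because $D'=pD$ carries the extra $p$); then imposing $r_p(\alpha)=1$ kills everything except $B=0$ in type I (and pins down the needed nonvanishing of $x$, hence which $D^I_s(x)$ survive), and in type II leaves precisely the $p^{n+1}$ classes of the stated shape, the cross-check being a Smith-normal-form computation showing the relevant quotient lattice is $\Z/p\times\Z/p^2$ times elementary factors. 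Distinctness of the listed cosets then follows from the uniqueness in the first step, completeness from the first two steps, and I would sanity-check the total count against the classically known degree of $T_{p,n-1}$.

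The hard part will be the index-chasing in this final step: the interplay between the $p$- and $p^2$-divisibilities coming from $A=p^2\,{}^tD^{-1}$ and the reduction mod $p$ is delicate, and deciding exactly which residue classes of $B$ satisfy $r_p(\alpha)=1$ --- in particular the extra factor $p$ in the lower-right block for types II and III, and the degeneration of the block structure at the boundary values $s=0$ and $s=n-1$ --- requires care. I would therefore dispose of type III and of the $s=0$, $s=n-1$ sub-cases of types I and II first, where the matrices are diagonal or nearly so, and only then treat the generic range $1\le s\le n-2$.
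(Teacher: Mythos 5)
Your proposal is correct and follows essentially the same route as the paper: parameterize the right cosets by pairs $(D,B)$ with $D$ running over representatives of $\Lambda\backslash\Lambda t\Lambda$ for $t\in\{d_{n-1},d_1,p\cdot 1_n\}$ and $B$ over $B(D)/\sim$, then cut down by the rank condition $r_p(\alpha)=1$; the only difference is that you derive this parameterization from the Iwasawa decomposition and elementary-divisor theory, whereas the paper invokes Andrianov's formula directly. Your direct block-by-block solution of ${}^tBD={}^tDB$ and your hand count of rank-one symmetric matrices over $\F_p$ (for $p$ odd) replace, respectively, the paper's diagonalization trick via the unipotent matrices $A_s$ and its citation of MacWilliams, but these are cosmetic differences.
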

\begin{proof}
We just apply the formula \cite[(3.94), p. 98]{Andrianov}. 
First we need to compute a complete system of representatives of 
$\Lambda\bs \Lambda t\Lambda \simeq (t^{-1}\Lambda t)\cap \Lambda\bs \Lambda$ 
for each $t\in \{d_{n-1},d_1,p1_{n}\}$ where $d_{n-1}=\diag(1,\overbrace{p,\ldots,p}^{n-1})$ and 
$d_{1}=\diag(\overbrace{1,\ldots,1}^{n-1},p)$. By direct computation, for $t=d_{n-1}$ (resp. 
$t=d_1$), it is given by $D^I$ 
(resp. $D^{II}$). For $t=p\cdot 1_{n}$ it is obviously a singleton. 

As for the computation of $B(D)/\sim$, we give details only for $D\in D^I$ and 
the case of $D^{II}$ is similarly handled. 
For each $D=D^I_s(x),\ 0\le s\le n-2 $, put 
$A_s=\left(\begin{array}{c|c|c}
1_s &  &  \\
\hline
& 1 & -px  \\
\hline
 & &1_{n-1-s}
\end{array}
\right)$ so that $DA_s=\left(\begin{array}{c|c|c}
p\cdot 1_s &  &  \\
\hline
& 1 &  \\
\hline
 & &p\cdot 1_{n-1-s}
\end{array}
\right)$. Put $A_{n-1}=1_{2n}$ for $D=D^I_{n-1}$. 
Then for each $D=D^I_s$ we have a bijection 
$$B(D)/\sim\stackrel{\sim} {\longrightarrow} B(DA_s)/\sim,\quad B\mapsto BA_s.
$$
Therefore, we may compute $B(DA_s)/\sim$ and convert them by multiplying 
$A^{-1}_s$ on the right. 

We write $B\in B(DA_s)$ as a block matrix 
$B=\left(\begin{array}{c|c|c}
\overbrace{B_{11}}^{s} & \overbrace{B_{12}}^{1}   & \overbrace{B_{13}}^{n-1-s}   \\
\hline
B_{21}& B_{22} & B_{23}  \\
\hline
B_{31} & B_{32} &B_{33}
\end{array}
\right)$ 
with respect to the partition $s+1+(n-1-s)$ of $n$ where the column is also decomposed 
as in the row. The relation yields 
$$B=\left(\begin{array}{c|c|c}
B_{12 }&B_{12}   & B_{13}   \\
\hline
p\cdot{}^t B_{12}& B_{22} & p\cdot {}^t B_{32}  \\
\hline
{}^t B_{13} & B_{32} &B_{33}
\end{array}
\right)$$
where $B_{11}\in S_{s}(\Z_p)$,\ $B_{22}\in \Z_p$, and $B_{33}\in S_{n-1-s}(\Z_p)$. 
We write $X\in M_n(\Z_p)$ as 
$\left(\begin{array}{c|c|c}
\overbrace{X_{11}}^{s} & \overbrace{X_{12}}^{1}   & \overbrace{X_{13}}^{n-1-s}   \\
\hline
X_{21}& X_{22} & X_{23}  \\
\hline
X_{31} & X_{32} &X_{33}
\end{array}
\right)$ 
with respect to the partition $s+1+(n-1-s)$ of $n$ as we have done for $B$. 
Then 
$$XDA_s=
\left(\begin{array}{c|c|c}
pX_{11} &X_{12} & pX_{13}  \\
\hline
pX_{21}& X_{22} & pX_{23}  \\
\hline
pX_{31} & X_{32} &pX_{33}
\end{array}
\right)$$
Our matrix $B$ in $B(DA_s)/\sim$ is considered by taking modulo $XDA_s$ 
for any $X\in M_n(\Z_p)$. Hence $B$ can be, up to equivalence, of form 
\begin{equation}\label{B(D)}B=\left(\begin{array}{c|c|c}
B_{11} &0_{s\times 1} & B_{13} \\
\hline
0_{1\times s}& 0 & 0_{1\times (n-1-s)}  \\
\hline
{}^t B_{13} & 0_{(n-1-s)\times 1} & B_{33}
\end{array}
\right)
\end{equation}
where $B_{11},B_{33}$, and $B_{13}$ belong to $S_s(\F_p)$, 
$S_{n-1-s}(\F_p)$, and $M_{s\times(n-1-s)}(\F_p)$ respectively. 
Further, to multiply $A^{-1}_s$ on the right never change anything. 
Therefore, (\ref{B(D)}) gives a complete system of representatives of $B(D)/\sim$ for 
$D=D^I_s$. The condition $r_p(\alpha_I(D,B))=1$ and the modulo $K$ on the left yield the desired result. 
For each $D\in D^{II}_s$, a similar computation shows 
any element of $S(p\cdot D)/\sim$ is given by
$$\left(\begin{array}{c|c|c}
\overbrace{B_{11}}^{s} & \overbrace{p\cdot {}^t B_{21}}^{1}   & \overbrace{B_{13}}^{n-1-s}   \\
\hline
B_{21}& B_{22} & B_{23}  \\
\hline
{}^t B_{13} & p\cdot {}^t B_{23} &B_{33}
\end{array}
\right)$$
modulo under the matrices of forms 
$$\left(\begin{array}{c|c|c}
pX_{11} &p^2X_{12} & pX_{13}  \\
\hline
pX_{21}& p^2X_{22} & pX_{23}  \\
\hline
pX_{31} & p^2 X_{32} &pX_{33}
\end{array}
\right).$$
Therefore, $B_{11},B_{13},B_{21},B_{22},B_{23}$, and $B_{33}$ run over 
$$M_{s}(\F_p),\ M_{s\times (n-1-s)(\F_p)},\ M_{1\times s(\F_p)},\ \Z/p^2\Z,\ 
M_{1\times (n-1-s)(\F_p)},$$ and $M_{n-1-s}(\F_p)$ respectively.   
The claim now follows from the rank condition $r_p(\alpha_{II}(D,B))=1$ 
 and the modulo $K$ on the left again. 

As for $D=p1_{n}$ in the case of type III, it is easy to see that 
$S(D)/\sim$ is naturally identified with $S_n(\F_p)$.  
Recall $p$ is an odd prime by assumption. 
The number of matrices in $S_n(\F_p)$ of rank 1 is given in \cite[Theorem 2]{Mac}. 
\end{proof}

Recall the right coset decomposition $T_{p,n-1}:=K\diag(1,\overbrace{p,\ldots,p}^{n-1},p^2,
\overbrace{p,\ldots,p}^{n-1})K=\coprod_{\alpha\in J}K\alpha$. 
For each $\alpha,\beta\in J$, we observe that any element of 
$K\alpha \beta K$ is of mod $p$ rank at most two and has the similitude $p^4$. 
Hence the double coset $K\alpha \beta K$ satisfies $K\alpha \beta K=
K\gamma K$, where $\gamma$ is one of the following 4 elements:
$$
\gamma_1:=\diag(1,\overbrace{p^2,\ldots,p^2}^{n-1},p^4,
\overbrace{p^2,\ldots,p^2}^{n-1}),\ \gamma_2:=\diag(p,p,\overbrace{p^2,\ldots,p^2}^{n-2},p^3,p^3,
\overbrace{p^2,\ldots,p^2}^{n-2}),$$
$$\gamma_3:=\diag(p,\overbrace{p^2,\ldots,p^2}^{n-1},p^3,
\overbrace{p^2,\ldots,p^2}^{n-1}),\ \gamma_4:=p^2\cdot I_{2n}
$$
Here we use the Weyl elements in $K$ to renormalize the order of entries. Then
\begin{equation}\label{product}
T_{p,n-1}\cdot T_{p,n-1}=\sum_{i=1}^4 m(\gamma_i) K\gamma_i K
\end{equation}
where $m(\gamma_i)$ is defined by 
\begin{equation}\label{m-gamma}
m(\gamma_i):=|\{(\alpha,\beta)\in J\times J\ |\ K\alpha\beta=K\gamma_i\}|
\end{equation}
for each $1\le i\le 4$ (cf. \cite[p.52]{Shimura-book}). Let us compute $m(\gamma_i)$ for each $\gamma_i$. 

Let $J_I$ be the subset of $J$ consisting of the following elements
$$\alpha^s_I(x)=
\left(\begin{array}{ccc|ccc}
p\cdot 1_{s} &  & &&& \\
& p^2 & & &  &  \\
 & -p\cdot {}^t x & p\cdot 1_{n-1-s} &&& \\
 \hline 
&&& p\cdot 1_{s} &  & \\
&&&   & 1  & x\\
&&& &  &p\cdot 1_{n-1-s} 
\end{array}
\right),\ 0\le s\le n-2,\ x\in M_{1\times (n-1-s)}(\F_p)$$
and 
$\alpha^{n-1}_I=\diag(p^2,\overbrace{p,\ldots,p}^{n-1},1,\overbrace{p,\ldots,p}^{n-1})$. 

Similarly,  let $J_{II}$ be the subset of $J$ consisting of the following elements
$$\alpha^s_{II}(y,B_{21},B_{22},B_{33})=
\left(\begin{array}{ccc|ccc}
p\cdot 1_{s} &  & & 0_s& p\cdot {}^t B_{21} & 0_{s\times (n-1-s)} \\
-{}^t y & 1 &  & B_{21}& B_{22} &B_{23}  \\
 &  & p\cdot 1_{n-1-s} &  0_{(n-1-s)\times s} &p\cdot {}^tB_{23} & 0_{n-1-s} \\
 \hline 
&&& p\cdot 1_{s} &py  & \\
&&&   & p^2  &  \\
&&& &  &p\cdot 1_{n-1-s} 
\end{array}
\right)$$
where $1\le s\le n-1,\ y\in M_{s\times 1}(\F_p)$ and 
$B_{21}, B_{23}$, and $B_{22}$ run over 
$M_{1\times s}(\F_p),\ M_{1\times (n-1-s)}(\F_p)$, and $\Z/p^2\Z$ 
respectively.  
In addition, 
$$\alpha^0_{II}(C_{22},C_{23})=
\left(\begin{array}{cc|cc}
1 &  & C_{22} & C_{23} \\
& p\cdot 1_{n-1} &  p\cdot {}^tC_{23}  & 0_{n-1}  \\
 \hline 
& & p^2 &  \\
& &  & p\cdot 1_{n-1}   
\end{array}
\right),\ C_{22}\in \Z/p^2\Z,\ C_{23}\in M_{1\times (n-1)}(\F_p).$$
Finally, let $J_{III}$ be the subset of $J$ consisting of the following elements 
$$\alpha_{III}(B)=\left(\begin{array}{c|c}
p\cdot 1_n & B \\
\hline 
& p\cdot 1_n  
\end{array}
\right),\ B\in S_n(\F_p)\ \text{with $r_p(B)=1$}.
$$
\begin{lem}\label{KalphaK}For each $\alpha\in J$, 
$$K\alpha K=K\diag(1,\overbrace{p,\ldots,p}^{n-1},p^2,\overbrace{p,\ldots,p}^{n-1})K,
$$
and $${\rm vol}(K\diag(1,\overbrace{p,\ldots,p}^{n-1},p^2,\overbrace{p,\ldots,p}^{n-1})K)
=p\sum_{i=0}^{2n-1}p^i$$
where the measure is normalized as ${\rm vol}(K)=1$. 
\end{lem}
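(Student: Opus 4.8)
The plan is to prove the two assertions of the lemma separately: first, that every representative $\alpha$ occurring in the explicit list $J=J_I\sqcup J_{II}\sqcup J_{III}$ satisfies $K\alpha K=K\,\diag(1,\overbrace{p,\ldots,p}^{n-1},p^2,\overbrace{p,\ldots,p}^{n-1})\,K=T_{p,n-1}$; and second, that the volume of this double coset, under the normalization $\vol(K)=1$, equals $p\sum_{i=0}^{2n-1}p^i$.

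For the first assertion I would observe that, by construction, each $\alpha$ in $J_I$, $J_{II}$, $J_{III}$ is a right-coset representative occurring in the decomposition $T_{p,n-1}=\coprod_{\alpha\in J}K\alpha$ of Lemma \ref{DC} (the passage to the normalized forms $\alpha^s_I(x)$, $\alpha^s_{II}(\dots)$, $\alpha_{III}(B)$ being carried out by pre- and post-multiplication by Weyl elements of $K$, which leaves a double coset invariant). Hence $\alpha\in T_{p,n-1}=K\,\diag(1,\overbrace{p,\ldots,p}^{n-1},p^2,\overbrace{p,\ldots,p}^{n-1})\,K$, and so $K\alpha K$ is that same double coset. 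If a self-contained check is preferred, one reads off from the displayed block matrices that each $\alpha$ is integral, is a symplectic similitude with multiplier $p^2$ (immediate from the presence of ${}^tD^{-1}$ in the upper-left block for types $I$ and $II$, and from the shape for type $III$), and has $r_p(\alpha)=1$ (for type $I$ only the $(s{+}1)$-st row of the lower block is $\not\equiv 0\pmod p$; for type $II$ only the $(s{+}1)$-st row of the upper half; for type $III$, $r_p(\alpha)=r_p(B)=1$). Since a $\GSp(2n,\Q_p)$-double coset with multiplier $p^2$ is determined by its elementary divisors, which necessarily pair up as $\{d,p^2d^{-1}\}$ with $d\in\{1,p,p^2\}$ and with the number of $d=1$ equal to the mod-$p$ rank, the condition $r_p(\alpha)=1$ pins the type down to $(1,\overbrace{p,\ldots,p}^{n-1},p^2,\overbrace{p,\ldots,p}^{n-1})$.

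For the volume I would use that, with $\vol(K)=1$, the volume of the double coset equals the number $|J|$ of right $K$-cosets, and compute $|J|=|J_I|+|J_{II}|+|J_{III}|$ directly from the three lists, using $|M_{a\times b}(\F_p)|=p^{ab}$, $|\Z/p^2\Z|=p^2$, and the count $|\{B\in S_n(\F_p):r_p(B)=1\}|=p^n-1$ for odd $p$ (by \cite[Theorem~2]{Mac}; or directly, a rank-one symmetric matrix over $\F_p$ is $\lambda\,v\,{}^t\!v$ with $v\in\F_p^n\setminus\{0\}$, $\lambda\in\F_p^\times$, modulo $(v,\lambda)\mapsto(\mu v,\mu^{-2}\lambda)$, giving $(p^n-1)(p-1)/(p-1)=p^n-1$). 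This yields $|J_I|=1+\sum_{s=0}^{n-2}p^{\,n-1-s}=1+p+\cdots+p^{n-1}$, $|J_{III}|=p^n-1$, and $|J_{II}|=p^2p^{\,n-1}+\sum_{s=1}^{n-1}p^{\,s}p^{\,s}p^2p^{\,n-1-s}=p^{n+1}+p^{n+2}+\cdots+p^{2n}$ (the $\alpha^0_{II}(C_{22},C_{23})$ contributing $p^{n+1}$ and each $\alpha^s_{II}(y,B_{21},B_{22},B_{23})$ contributing $p^{\,n+1+s}$). Adding, the stray $1$ and $-1$ cancel and the term $p^n$ bridges the gap from $p^{n-1}$ to $p^{n+1}$, so $|J|=p+p^2+\cdots+p^{2n}=p\sum_{i=0}^{2n-1}p^i$; one may cross-check the case $n=1$, where $T_{p,0}=\GL_2(\Z_p)\diag(1,p^2)\GL_2(\Z_p)$ indeed has $p^2+p$ right cosets.

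I expect the only genuinely delicate point to be the bookkeeping behind these counts: that $J_I\sqcup J_{II}\sqcup J_{III}$ really is a disjoint, complete, and irredundant system of right-coset representatives of $T_{p,n-1}$ — nothing omitted, and nothing repeated within or across the three types (one must, for instance, be sure that block-diagonal members such as $\alpha^0_{II}(0,0)=\diag(1,p,\ldots,p,p^2,p,\ldots,p)$ are not already on the type-$I$ list, the positions of the entries $p^2$ and $1$ being interchanged there). This is precisely the combinatorial content already supplied by Lemma \ref{DC} together with the Weyl-renormalization producing $J_I,J_{II},J_{III}$; once that exactness is granted, both assertions of the present lemma are formal, and the telescoping $\bigl(1+p+\cdots+p^{n-1}\bigr)+\bigl(p^n-1\bigr)+\bigl(p^{n+1}+\cdots+p^{2n}\bigr)=p+p^2+\cdots+p^{2n}$ is the clean way to read off the answer.
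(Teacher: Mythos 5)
Your proposal is correct and takes essentially the same approach as the paper: identifying the double coset via elementary divisor theory (with the rank-one symmetric matrix count from \cite{Mac} for type III) and computing $|J|=|J_I|+|J_{II}|+|J_{III}|$ as a telescoping geometric sum. Your opening observation---that $K\alpha K=T_{p,n-1}$ is already formal since $\alpha$ is by construction a right-coset representative of that single double coset---is a streamlining the paper leaves implicit, but otherwise the two arguments coincide, with your version simply carrying out the bookkeeping the paper's proof defers to the reader.
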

\begin{proof} 
Except for the case of type III, it follows from elementary divisor theory. 
For type III, it follows from \cite{Mac} that the action of $\GL_n(\F_p)$ on the set of 
all matrices of rank 1 in $S_n(\F_p)$ given by $B\mapsto {}^tXBX,\ X\in \GL_n(\F_p)$ and 
such a symmetric matrix $B$ has two orbits $O(\diag(1,\overbrace{0,\ldots,0}^{n-1}))$ and 
$O(\diag(g,\overbrace{0,\ldots,0}^{n-1}))$ where $g$ is a generator of $\F^\times_p$. 
The claim follow from this and elementary divisor theorem again.  

For the latter claim, it is nothing but $|J|$ and we may compute the number of each type. 
\end{proof}
\begin{remark}\label{WeylK}
Since $K=\Sp_{2n}(\Z_p)$ contains Weyl elements, 
\begin{eqnarray}
K\diag(1,\overbrace{p,\ldots,p}^{n-1},p^2,\overbrace{p,\ldots,p}^{n-1})K 
&=&K\diag(\overbrace{p,\ldots,p}^{i},1,\overbrace{p,\ldots,p}^{n-i-1},
\overbrace{p,\ldots,p}^{i},p^2,\overbrace{p,\ldots,p}^{n-i-1})K  \nonumber \\
&=&K\diag(\overbrace{p,\ldots,p}^{i},p^2,\overbrace{p,\ldots,p}^{n-i-1},
\overbrace{p,\ldots,p}^{i},1,\overbrace{p,\ldots,p}^{n-i-1})K \nonumber
\end{eqnarray}
for $0\le i\le n-1$.  
\end{remark}

Notice that $Kd_{n-1}(p)K=K(p^2\cdot d_{n-1}(p)^{-1})K$ 
where $d_{n-1}(p):=\diag(1,\overbrace{p,\ldots,p}^{n-1},p^2,\overbrace{p,\ldots,p}^{n-1})$. 
By definition and Lemma \ref{KalphaK} with Remark \ref{WeylK}, it is easy to see that 
\begin{eqnarray}
m(\gamma_i)&=&|\{\beta\in J\ |\ \gamma_i\beta^{-1}\in 
Kd_{n-1}(p)K\}| \nonumber \\
&=&|\{\beta\in J\ |\ \beta\cdot (p^2\cdot \gamma_i^{-1})\in 
Kd_{n-1}(p)K\}| \nonumber \\
&=&|\{\beta\in J\ |\  \text{$\beta\cdot (p^2\cdot \gamma_i^{-1})$ is $p$-integral and } r_p(\beta\cdot (p^2\cdot \gamma_i^{-1}))=1\}| \nonumber
\end{eqnarray}
(see \cite[p.52]{Shimura-book} for the first equality).

We are now ready to compute the coefficients. 
For $m(\gamma_1)$,  we observe the $p$-integrality. We see that only $\alpha^0_{II}(C_{22},C_{23})$ with $C_{22}=0$ and 
$C_{23}=0_{1\times(n-1)}$ can 
contribute there. Hence $m(\gamma_1)=1$. 

For $m(\gamma_2)$,  we observe the $p$-integrality and 
the rank condition. Then only $\alpha^0_{II}(0,0_{1\times (n-1)})$ and 
$\alpha^1_{II}(y,0,0,0_{1\times (n-2)}),\ y\in \F_p$  can do there. Hence $m(\gamma_2)=1+p$. 
For $m(\gamma_3)$, only $\alpha_{III}(B),\ B\in S_n(\F_p)$ with $r_p(B)=1$ contribute.
By Lemma \ref{DC}-(3), we have $m(\gamma_3)=p^n-1$. 

Finally, we compute $m(\gamma_4)$. 
Since $p^{-2}\gamma_4=I_4$, the condition is checked easily. 
All members of $J=J_I\cup J_{II}\cup J_{III}$ can contribute there. 
Therefore, we have only to count the number of each type. Hence, we have 
$$m(\gamma_4)=
\overbrace{1+p+\cdots+p^{n-1}}^{{\rm type}\ I}+
\overbrace{p^{n+1}+p^{n+2}+\cdots+p^{2n}}^{{\rm type}\ II}+
\overbrace{p^n-1}^{{\rm type}\ III}
=p\sum_{i=0}^{2n-1}p^i$$
as desired. Note that $m(\gamma_4)$ is nothing but the volume of 
$Kd_{n-1}(p)K$ (see Lemma \ref{KalphaK}). 

Recalling  $T_{p,n-1}:=pT(p,(0,\ldots,0,1))$, we have 
\begin{equation*}\label{final-form}
T(p,(0,\ldots,0,1))^2=\sum_{i=1}^4 m(\gamma_i) K(p^{-2}\gamma_i)K.
\end{equation*}
Note that
$$K(p^{-2}\gamma_1)K=T(p,(\overbrace{0,...,0}^{n-1},2)),\quad
K(p^{-2}\gamma_2)K= T(p,(\overbrace{0,...,0}^{n-2},1,1)),
$$
$$
K(p^{-2}\gamma_3)K= T(p,(\overbrace{0,...,0}^{n-1},1)),\quad 
K(p^{-2}\gamma_4)K= T(p,\overbrace{(0,...,0)}^{n})=K I_{2n}K.
$$ 
We can take $K$ back to $\G(N)$ without changing anything since $p\nmid N$. 
This proves Theorem \ref{main-appendix}.

\begin{remark} We would like to make corrections to \cite{KWY}. 
\begin{enumerate}
\item On page 356, line 1, $dxdy$ is missing in $\mu_\infty^{\rm ST}$. In [25, page 929, line 3], the same typo is repeated.
\item On page 362, line 12-13, $T_{2,p}^2$ should be a linear combination of 4 double cosets $KMK$, where $M$ runs over
$\diag(1,p^2,p^4,p^2), \diag(p,p,p^3,p^3), \diag(p,p^2,p^3,p^2), \diag(p^2,p^2,p^2,p^2)$. 
\item On page 362, the coefficient of $R_{p^2}$ should be $p^4+p^3+p^2+p=p\ds\sum_{i=0}^3p^i$ 
which is the volume of $\Sp(4,\Z_p)\diag(1,p^2,p^4,p^2)\Sp(4,\Z_p)$ explained in \cite[p.190, line -9 to -8]{RS}. 
\item On page 403, Lemma 8.1, the inequality $q(F)\geq N$ is not valid. Similarly, in page 405, Lemma 8.3, the inequality $q(F)\geq N$ is not valid. We need to consider newforms as in Section 5 of this paper. Then for a newform, we obtain the inequality $q(F)\geq N^{\frac 12}$ and $\log c_{\uk,N}\asymp \log N$ is valid as in Lemma \ref{logN} of this paper.
\item On page 409, line 10, we need to add $-2(G(\frac 32)+G(-\frac 12))$, in order to account for the poles of $\Lambda(s,\pi_F,{\rm Spin})$, and the contour integral is over $Re(s)=2$. So we need to add $O\left(\frac {|HE_\uk(N)^0|}{|HE_\uk(N)|}\right)$ in (9.3).
However, only CAP forms give rise to a pole, and the number of CAP forms in $HE_\uk(N)$ is $O(N^{8+\epsilon})$. So it is negligible. 

In the case of standard $L$-functions, the non-CAP and non-genuine forms which give rise to poles are: $1\boxplus \pi$, where $\pi$ is an orthogonal cuspidal representation of $GL(4)$ with trivial central character, or $1\boxplus\pi_1\boxplus \pi_2$, where $\pi_i$'s are 
dihedral cuspidal representations of $GL(2)$. In those cases, by Proposition \ref{trivial-case} and \cite[Theorem 2.9]{KWY1}, we can count such forms without extra conditions on $N$ in Proposition \ref{fixedV}. So our result is valid as it is written.
\end{enumerate}
\end{remark}

\end{document}